\documentclass[12pt]{amsart}
\usepackage[utf8]{inputenc}
\usepackage[english]{babel}
\usepackage{subcaption}
\usepackage{enumerate}
\usepackage{needspace}
\usepackage{amsfonts}
\usepackage{comment}
\usepackage{amsmath,amssymb}
\usepackage[makeroom]{cancel}
\usepackage{stmaryrd}
\usepackage{tikz-cd} 
\usepackage{enumerate}
\usepackage{enumitem}
\usepackage{graphicx}
\usepackage{amsthm}
\usepackage{wrapfig}
\usepackage{hyperref}
\usepackage{chngcntr}
\counterwithin{figure}{section}
\newtheorem{theorem}{Theorem}
\newtheorem*{theorem*}{Theorem}
\newtheorem{prop}[theorem]{Proposition}

\newtheorem{coro}{Corollary}[theorem]
\newtheorem{lemm}[theorem]{Lemma}
\theoremstyle{definition}
\newtheorem{defi/}[theorem]{Definition}
\newenvironment{defi}
  {%
   \pushQED{\qed}\begin{defi/}}
  {\popQED\end{defi/}}
\newtheorem*{defi*}{Definition}
\newtheorem*{prop*}{Proposition}
\newtheorem*{lemm*}{Lemma}
\newtheorem{ques}[theorem]{Question}

\newtheorem{cor-definition}[theorem]{Corollary-Definition}
\newtheorem{rema}[theorem]{Remark}

\newtheorem*{conv*}{Convention}
\newtheorem{fact}{Fact}

\numberwithin{theorem}{section}

\newcommand{\clos}[1]{\overline{#1}}
\newcommand{\inte}[1]{\overset{\circ}{#1}}

\numberwithin{equation}{section}
\title{Markovian actions are Anosov-like}
\author{Ioannis Iakovoglou}

\begin{document}
\maketitle
\begin{abstract}
    Motivated by the introduction of Anosov-like actions by Thomas Barthelmé, Steven Frankel, and Kathryn Mann, in this paper we study group actions on the plane that preserve a pair of transverse singular foliations, as well as a strong Markovian family—namely, a collection of rectangles serving as an analogue of a Markov partition for group actions. We prove that any such action is Anosov-like. In particular, the existence of a strong Markovian family constrains the behavior of the invariant bifoliation and endows the action with characteristic features of hyperbolic dynamics, such as the existence of infinitely many elements with fixed points.
\end{abstract}
\section{Introduction}

Following the works of T.Barbot, S.Fenley and L.Mosher, to every pseudo-Anosov flow $\Phi$ (with no circle $1$-prong singularities) on a closed 3-manifold $M$, we can associate a topological plane $\mathcal{P}$ endowed with a pair of transverse singular foliations $(F^s,F^u)$ and a continuous action $\alpha:\pi_1(M)\rightarrow \text{Homeo}(\mathcal{P})$ preserving $F^s$ and $F^u$. Thanks to a theorem by T. Barbot, the data $(\mathcal{P}, F^s,F^u, \alpha)$, considered up to conjugation, completely determine $\Phi$ up to orbital equivalence. The previous results establish a connection from $3$-dimensional pseudo-Anosov flows to group actions on the plane preserving a pair of transverse singular foliations.

Conversely, given a topological plane $\mathcal{P}$, a group $G$ and a continuous action $\rho: G\rightarrow \text{Homeo}(\mathcal{P})$ preserving a pair of transverse singular foliations $(\mathcal{F}^s,\mathcal{F}^u)$, it is natural to ask: 

\begin{ques}\label{q.origin}
   Under which conditions do the data $(\mathcal{P},\mathcal{F}^s,\mathcal{F}^u,\rho)$ arise from a pseudo-Anosov flow on a closed $3$-manifold $M$?
\end{ques}

Motivated by the above question, the authors of \cite{circleatinfinity} defined the notion of \emph{Anosov-like action} on the plane, a necessary condition for a group action to arise from some pseudo-Anosov flow : 

\begin{defi}[Anosov-like action]\footnote{The definition of Anosov-like action introduced in \cite{circleatinfinity} was well adapted to the study of transitive Anosov flows and was later generalized to the definition used here, which coincides with that of \cite{nontransitiveanosovlike}.} \label{d.anosovlike}
Consider $\rho$ a continuous and faithful action on the plane  $\mathcal{P}$ preserving a pair of transverse singular foliations $(\mathcal{F}^s,\mathcal{F}^u)$ with no 1-prong singularities. The action $\rho$ is called \emph{Anosov-like} if it satisfies the following:
\begin{enumerate}\renewcommand{\labelenumi}{(A\arabic{enumi})}
    \item For any non-trivial $g\in G$ and any leaf $l\in \mathcal{F}^{s,u}$, if $\rho(g)(l)=l$, then $\rho(g)$ has a unique fixed point $x$ in $l$. Furthermore, after possibly changing the roles of $\mathcal{F}^s$ and $\mathcal{F}^u$, we have that $$\forall y^s\in \mathcal{F}^s(x)~ \forall y^u\in \mathcal{F}^u(x)~~\rho(g^n)(y^s)\underset{n\rightarrow+\infty}{\longrightarrow} x \text{ and }\rho(g^n)(y^u)\underset{n\rightarrow-\infty}{\longrightarrow} x$$.
    \item  The union of leaves of $\mathcal{F}^s$ (resp. $\mathcal{F}^u$) that are fixed by some non-trivial element of $G$ is dense in $\mathcal{P}$.
    \item Each singular point of $\mathcal{F}^{s,u}$ is fixed by some non-trivial element of $G$.
    \item If $l\in \mathcal{F}^s$ (resp. $l\in \mathcal{F}^u$) is non-separated with some leaf $l'\in \mathcal{F}^s$ (resp. $l'\in \mathcal{F}^u$) in the leaf space of $\mathcal{F}^s$ (resp. $\mathcal{F}^u$), then there exists a non-trivial element $g\in G$ such that $\rho(g)(l)=l$.
    \item There are no totally ideal quadrilaterals in $\mathcal{P}$ (see Definition \ref{d.ideal}).
\end{enumerate}
\end{defi}

Anosov-like actions provide a natural framework for studying pseudo-Anosov flows in dimension $3$. More precisely, the class of Anosov-like actions captures the main hyperbolic features and includes all actions arising from pseudo-Anosov flows in dimension $3$ (see Proposition 2.2 of \cite{nontransitiveanosovlike}). On the other hand, there exist Anosov-like actions that are not associated with any pseudo-Anosov flow (see Example 1 in \cite{circleatinfinity}). Nevertheless, many properties known for actions arising from pseudo-Anosov flows continue to hold in the more general setting of Anosov-like actions (\cite{circleatinfinity,nontransitiveanosovlike}).

In this paper, inspired by the notion of Markovian family that we first defined in \cite{thesisioannis} and motivated by Question \ref{q.origin}, we introduce and study the properties of a new class of Anosov-like actions, defined by the existence of some invariant  ``Markov partition".

\begin{defi}[Markovian family - Informal]\label{d.markovianactioninformal}
Consider $\rho$ a continuous action on the plane  $\mathcal{P}$ preserving a pair of transverse singular foliations $(\mathcal{F}^s,\mathcal{F}^u)$ with no 1-prong singularities. A \emph{Markovian family of $\rho$ in $(\mathcal{P},\mathcal{F}^s,\mathcal{F}^u)$} is a $\rho$-invariant set of rectangles $\mathcal{R}=(R_i)_{i \in I}$ (i.e. topological disks that are trivially bifoliated by $\mathcal{F}^s$ and $\mathcal{F}^u$) satisfying the three following properties: 
\begin{enumerate}
\item $\mathcal{R}$ is finite up to the action by $\rho$ 
\item For every two distinct rectangles $R_i,R_j\in\mathcal{R}$, if $\overset{\circ}{R_i } \cap \overset{\circ}{R_j} \neq \emptyset$, then, up to changing the roles of $R_i$ and $R_j$, $R_i \cap R_j$ is a non-trivial horizontal subrectangle of $R_i$ and a non-trivial vertical subrectangle of $R_j$  
\item For any compact set $U\subset \mathcal{P}$, we have that $U$ can be covered by finitely many rectangles in $\mathcal{R}$ 
\end{enumerate} 
Moreover, the family $(R_i)_{i \in I}$ will be called a \emph{strong Markovian family of $\rho$} if it also satisfies the following: 
\begin{enumerate}
    \item[(4)] If $(R_i)_{i \in \mathbb{Z}}$ is a bi-infinite sequence of rectangles in $\mathcal{R}$ such that for every $i\in\mathbb{Z}$ the intersection $R_{i+1}\cap R_i$ is a non-trivial vertical (resp. horizontal) subrectangle of $R_i$ (resp. $R_{i+1}$), then $\underset{i\in \mathbb{Z} }{\cap}R_i$ contains a unique point. Conversely, every point of $\mathcal{P}$ arises as the intersection of a bi-infinite sequence of rectangles satisfying this property.
\end{enumerate}
\end{defi}
For a more detailed definition of the notion of strong Markovian family, see Definition \ref{d.markovfamily}.

\begin{defi}[Markovian action]\label{d.markovianaction}
    Consider $\rho$ a continuous, faithful action of a countable group $G$ on the plane $\mathcal{P}$ preserving a pair of transverse singular foliations $(\mathcal{F}^s,\mathcal{F}^u)$ with no $1$-prong singularities. We will say that $\rho$ is a \emph{Markovian action} (resp. \emph{strong Markovian action}) if: 
    \begin{enumerate}
        \item $G$ has no torsion
        \item $\rho$ preserves a  Markovian family (resp. strong Markovian family) in $(\mathcal{P},\mathcal{F}^s,\mathcal{F}^u)$ 
    \end{enumerate}   
\end{defi}

The main result in this paper shows that:

\begin{theorem}\label{t.main}
    Every orientation preserving strong Markovian action on the plane is Anosov-like. 
\end{theorem}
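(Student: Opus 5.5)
The plan is to verify (A1)--(A5) one at a time. The main tool is the symbolic coding supplied by property (4) of a strong Markovian family: every point is the unique intersection of a bi-infinite ``Markov chain'' of rectangles $(R_i)_{i\in\mathbb{Z}}$. Since $\mathcal{R}$ is finite modulo $\rho$, fix representatives $R^1,\dots,R^N$ of the orbits of rectangles. To any chain we then attach a bi-infinite word in the finite alphabet $\{1,\dots,N\}$, together with group elements $g_i$ satisfying $R_i=\rho(g_i)(R^{k_i})$. We also form the finite ``transition graph'': its edges are the pairs of rectangles among the representatives' translates whose intersection is a horizontal/vertical subrectangle in the sense of property (2). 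This is available because, by property (3) and local finiteness, each $R^k$ meets only finitely many rectangles of $\mathcal{R}$ in the required way.

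\textbf{Contraction along leaves.} The key preliminary lemma is a uniform contraction statement, which follows from uniqueness in property (4). For any decreasing forward chain $R_0\supset R_0\cap R_1\supset\cdots$ of successive vertical subrectangles, the nested intersection is a single stable segment, and its width shrinks to a point. Symmetrically, backward chains shrink onto unstable segments.

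\textbf{Fixed points and (A1).} If $\rho(g)$ fixes a leaf $l$, pick $x\in l$ and a chain coding $x$. Applying $g$ produces another chain coding $g(x)\in l$. Using finiteness modulo $\rho$ together with the pigeonhole principle on the transition graph, I aim to find a periodic word and a chain that is $g$-invariant up to a shift. Uniqueness in (4) then yields a fixed point in $l$, and the shrinking lemma gives the attraction/repulsion statement: $g$ maps a rectangle of the chain strictly across itself, stretching one direction and compressing the other. Uniqueness of the fixed point on $l$ follows from the contraction as well, because two fixed points on the same leaf would lie in the nested intersection of all forward images, contradicting that this intersection is a single point. Absence of torsion and faithfulness exclude the case where $g$ acts trivially on a neighbourhood.

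\textbf{(A2).} Any finite admissible word in the transition graph, closed into a loop, gives by the closing argument above an element $g$ with a fixed point inside the corresponding small rectangle. Since by property (3) the rectangles coded by finite words cover every compact set, and by the shrinking lemma they have arbitrarily small diameter, the points with fixed leaves are dense.

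\textbf{(A3).} Singular points are handled next. A rectangle cannot contain a singularity in its interior, so a $p$-prong point lies on boundaries of rectangles. Local finiteness shows that the rectangles around a singularity $s$ form finitely many orbit types. Hence some $g$ maps one rectangle adjacent to $s$ to another rectangle adjacent to $s$ with the same combinatorial position, forcing $g(s)=s$.

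\textbf{(A4).} This is similar: non-separated leaves $l,l'$ are boundary leaves of rectangles accumulating on both. The finitely many orbit types of configurations should produce $g$ with $g(l)=l$.

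\textbf{(A5).} A totally ideal quadrilateral would contain a compact set meeting rectangles whose chains never close up. Covering it via property (3), one derives a bi-infinite chain whose intersection is empty or non-unique, contradicting (4).

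I expect the main obstacle to be (A4) and (A5), where one must convert non-Hausdorff or ideal phenomena in the leaf spaces into the combinatorics of chains. Orientation preservation will be needed there to exclude an element swapping the sides of a leaf, which in (A1) would give a fixed leaf without the hyperbolic behaviour. I would first try to prove (A4) by taking a sequence of rectangles whose stable sides converge to both $l$ and $l'$ and applying finiteness modulo $\rho$.
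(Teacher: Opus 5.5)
The overall plan of checking (A1)--(A5) one at a time is right, and your (A3) argument is essentially the paper's. A singular point lies on a side of some rectangle. Iterating the strong finite return axiom and using finiteness gives $g\neq e$ sending one such side on $\mathcal{F}^s(s)$ to another, so $\rho(g)$ preserves the singular leaf and fixes $s$. The core of (A1), however, is missing. For a \emph{given} $g$ with $\rho(g)(l)=l$, pigeonholing rectangle types along a chain coding $x\in l$ produces elements $h$ with $\rho(h)(R_i)=R_j$. These are dictated by the chain and have no relation to $g$. So nothing yields a chain that is $g$-invariant up to shift, and the real danger, that $\rho(g)$ translates $l$ without fixed points, is never excluded.

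The missing idea is the ``long rectangles'' dichotomy of Lemma~\ref{l.longrectanglesregularleaves}, which comes from the fact that successors grow in the $\mathcal{F}^s$-direction. Fix a side $D$ of $l$. Either any two points of $l$ lie in a common rectangle of $\mathcal{R}$ whose interior meets $D$, or there is a unique $p\in l$ such that two points of $l$ lie in such a rectangle iff they are on the same separatrix of $p$. If $\rho(g)$ reverses the orientation of $l$ it trivially has a fixed point. Otherwise it preserves $D$, since it preserves the orientation of $\mathcal{P}$.
\begin{itemize}
\item In the second case $\rho(g)$ must fix the canonically defined $p$.
\item In the first case, $y$ and $\rho(g)(y)$ lie in the interior of $R\cap l$ for some such $R$. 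Then $\rho(g)(R)$ and $R$ overlap, and $\rho(g)(R)\neq R$. So one is a predecessor of the other, and $\rho(g)$ maps the segment $R\cap l$ into itself or onto a superset, which gives a fixed point.
\end{itemize}
The step $\rho(g)(R)\neq R$ (Lemma~\ref{l.preserverectangleimpliestorsion}) needs a genuine open-and-closed argument: an element preserving $R$ preserves all its predecessors and successors, hence by expansivity fixes pointwise $R$ and a neighbourhood of every fixed rectangle. Appealing to faithfulness alone does not do this. Your uniqueness argument has the same hole. The nested-intersection argument only shows at most one fixed point per rectangle (Lemma~\ref{l.twofixedpointsinthesamerectangle}). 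Two fixed points far apart on $l$ need not share a rectangle, and it is again the dichotomy (rectangles containing $\{p,x\}$ and $\{p,y\}$) that upgrades this to leaves (Corollary~\ref{c.twofixedpointssameleaf}).

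(A4) and (A5) are not actually argued. For (A4), nothing forces non-separated leaves to be boundary leaves of $\mathcal{R}$; if they were, Lemma~\ref{l.boundaryleavesareperiodic} would finish at once. Saying that finitely many orbit types ``should produce'' $g$ supplies no mechanism. The paper proceeds in four steps:
\begin{itemize}
\item on a stable segment joining the approximating leaves, it finds the point $p_0$ separating unstable leaves that reach $l$ from those that do not;
\item it shows a full unstable separatrix of $p_0$ lies in the product region between $l$ and $l'$;
\item it proves $\mathcal{F}^u(p_0)$ periodic by comparing rectangles of a single orbit along it;
\item it uses the contraction along that separatrix, from (A1), to force $\rho(g)(l)=l$.
\end{itemize}
For (A5), a totally ideal quadrilateral is a pure leaf configuration, and you give no way for it to produce a chain violating the expansivity axiom. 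The contradiction has to come from the group. The paper shows the four sides are non-periodic and builds elements $g_{k,l}$ from a successor chain along $l^s_-$, with fixed points inside $U$. It then rules out every possible accumulation set of $\rho(g_{k,0})(U)$ via Lemmas~\ref{l.noinfiniteproductregion} and~\ref{l.embeddingweirdrectangle}. Both of these lemmas rely on (A4) and on uniqueness of the periodic point on a periodic leaf (Lemma~\ref{l.periodicleavescarryuniqueperiodicpoint}, via Solodov).

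Two smaller corrections:
\begin{itemize}
\item $\mathcal{R}$ is not locally finite. Every point lies in infinitely many rectangles, and a rectangle meets predecessors of every generation. Only first-generation predecessors and successors are finitely many (Lemma~\ref{l.existenceofpredecessors}).
\item In (A2) you cannot close two-sided words near an arbitrary point, since periodic points need not be dense (as in non-transitive examples). Argue with leaves instead. With one-sided successor chains, pigeonhole gives $h$ with $\rho(h)(R_i)=R_j$, and the $\mathcal{F}^s$-leaf of its fixed point crosses a thin strip near $x$. Alternatively, as the paper does, show that boundary leaves are periodic and dense.
\end{itemize}
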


Our motivation for introducing the class of strong Markovian actions is twofold. First, in their recent work (see \cite{reconstruct}), T.Barthelmé, S.Fenley and K.Mann answer Question \ref{q.origin} in the case of transversally orientable pseudo-Anosov flows. More specifically, they provide a necessary and sufficient condition for an Anosov-like action to arise from a transversally orientable pseudo-Anosov flow and they also develop techniques allowing to reconstruct transversally orientable pseudo-Anosov flows from Anosov-like actions. Building on the previous techniques, together with F.Béguin, we generalize the previous results and we show in forthcoming work, that an orientation-preserving action on the plane arises from a pseudo-Anosov flow if and only if it is a strong Markovian action. Hence, preserving a strong Markovian family is the essential property separating general group actions on the plane preserving some bifoliation from actions that arise from pseudo-Anosov flows in dimension 3. 

Second, because of the combinatorial nature of their definition, strong Markovian actions on the plane form a subclass of Anosov-like actions that is more amenable to a complete classification than general Anosov-like actions. In fact, in our previous work in \cite{monstre}\footnote{In \cite{monstre} we defined a strong Markovian action as an Anosov-like action satisfying the three properties of Definition \ref{d.markovianactioninformal}. However, by Theorem \ref{t.main} the condition Anosov-like can be dropped in the previous definition.}, we reduced the study of a strong Markovian action to the study of a finite combinatorial object, thus forming a path towards a complete classification of strong Markovian actions and thus also of pseudo-Anosov flows in dimension 3.

We now turn to the precise definitions and the proof of Theorem \ref{t.main}.
\section{Preliminaries}
\subsection{On the pairs of singular foliations on the plane}\label{s.singfolidefi}

Consider $\mathcal{F}^2_h,\mathcal{F}^2_v$ the foliations by horizontal and vertical lines on $\mathbb{R}^2=\mathbb{C}$, $\mathcal{D}_2$ the euclidean open square $\{z\in\mathbb{C}||\text{Re}(z)|<1 \text{, } |\text{Im}(z)|<1\}$ and $\pi_p: \mathbb{C}\rightarrow \mathbb{C}$ the map associating $z$ to $z^p$, where $p\in \mathbb{N}^*$. Let $\mathcal{D}_1:= \pi_2(\mathcal{D}_2)$ and $\mathcal{D}_p:= \pi_p^{-1}(\mathcal{D}_1)$ for any $p\geq 3$. 

The set of images of the leaves of $\mathcal{F}^2_h,\mathcal{F}^2_v$ by $\pi_2$ define two singular foliations $\mathcal{F}^1_h,\mathcal{F}^1_v$ (see Figure \ref{f.prongsingularities}). Similarly, for every $p\geq 3$, the set of pre-images of the leaves $\mathcal{F}^1_h,\mathcal{F}^1_v$ by $\pi_p$, define two singular foliations, say $\mathcal{F}^p_h,\mathcal{F}^p_v$ (see Figure \ref{f.prongsingularities}).  

Throughout this paper, we will call a foliation on a surface \emph{singular} if it only admits prong-type singularities. More specifically,  
\begin{defi}\label{d.singularfolisurfaces}
    Consider $\Sigma$ a surface with empty boundary. We will say that $\mathcal{F}$ is \emph{a singular foliation of $\Sigma$} if it is a partition  of $\Sigma$ into \emph{regular} and \emph{singular leaves} such that: 
    \begin{enumerate}
    \item $\mathcal{F}$ contains a countable number (possibly zero) of singular leaves $(L_i)_{i\in I}$.
    \vspace{0.1cm}
    
    \item for every $i\in I$ we have that $L_i$ contains a unique point $\sigma_i$ for which there exist $U_{\sigma_i}\subset \Sigma$ a neighborhood of $\sigma_i$, $p_i\in \mathbb{N}\setminus\{0,2\}$ and $h:U_{\sigma_i}\rightarrow \mathcal{D}_{p_i}$ a homeomorphism verifying $$h(\sigma_i)=(0) \text{ and } h(\mathcal{F}\cap U_{\sigma_i})=\mathcal{F}^{p_i}_h\cap \mathcal{D}_{p_i}$$

    \item $\{\sigma_i|i\in I\}$ is a discrete subset of $\Sigma$ 
    \vspace{0.1cm}
    
    \item the restriction of $\mathcal{F}$ on $\Sigma-\{\sigma_i|i\in I\}$ defines a $C^0$ line foliation. 
    \end{enumerate}
    We will call $\sigma_i$ a \emph{$p_i$-prong singularity of $\mathcal{F}$} or more simply \emph{a singularity of $\mathcal{F}$} and we will denote by $\text{Sing}(\mathcal{F})$ the set of singularities of $\mathcal{F}$.  Any non-singular point of $\Sigma$ will be called a \emph{regular point of $\mathcal{F}$}.  

   \end{defi}
 \begin{figure}[h!]

  \begin{minipage}[ht]{0.4\textwidth}
    \centering 
     \vspace{0.8cm}
    \hspace{-0.5cm}
    \includegraphics[width=0.9\textwidth]{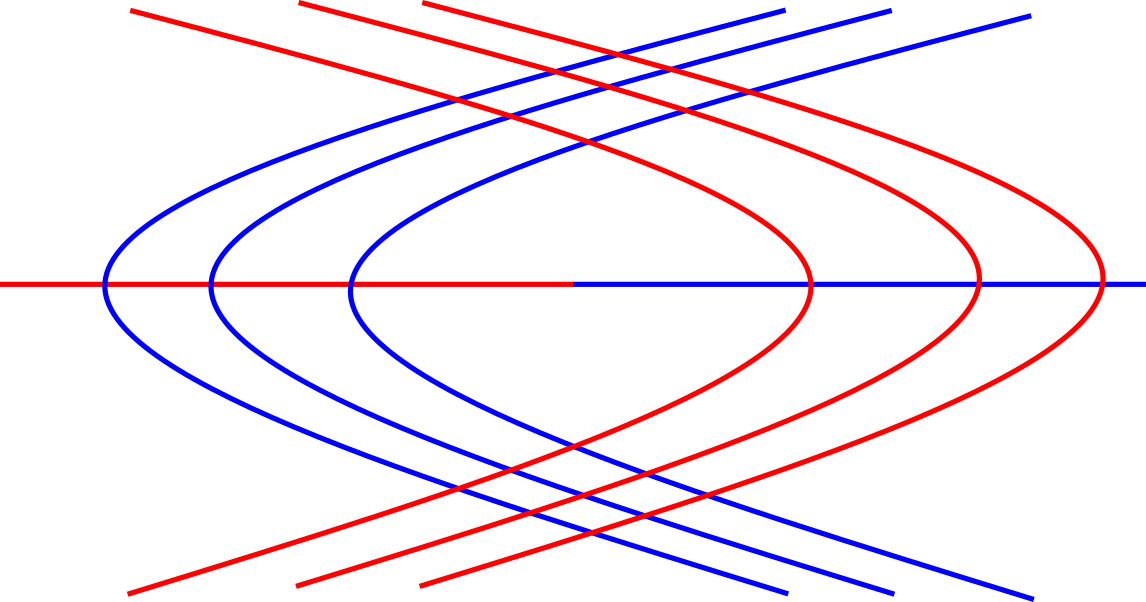}
  \hspace{-1cm}

  \end{minipage}
 \begin{minipage}[ht]{0.4\textwidth}
 \centering
    \includegraphics[width=0.8\textwidth]{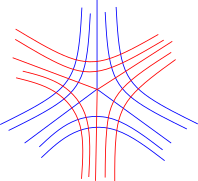}
    \hspace{-1cm}
    
  \end{minipage}
\caption{On the left, the foliations $\mathcal{F}^1_h,\mathcal{F}^1_v$. On the right, $\mathcal{F}^3_h,\mathcal{F}^3_v$.}
\label{f.prongsingularities}
  \end{figure} 

Similarly to transverse regular line foliations, we define transverse singular foliations as follows:

\begin{defi}\label{d.transversefolisurfaces}
   Consider $\mathcal{F}, \mathcal{G}$ two singular foliations on a surface $\Sigma$ with empty boundary. We will say that $\mathcal{F}, \mathcal{G}$ are \emph{transverse} if for every point $x\in \Sigma$ there exists $U_x$ a neighborhood of $x$ in $\Sigma$, $p\in \mathbb{N}^*$ and $h:U_x\rightarrow \mathcal{D}_p$ a homeomorphism such that $h(x)=0$, $$h(\mathcal{F}\cap U_x)=\mathcal{F}_h^p\cap \mathcal{D}_p \text{  and  } h(\mathcal{G}\cap U_x)=\mathcal{F}_v^p\cap \mathcal{D}_p$$ 
\end{defi}
An alternative way for defining a pair of transverse singular foliations on $\Sigma$ consists in finding an atlas on $\Sigma$ made of \emph{polygonal} charts that respect the pair of foliations. 
\begin{defi}
Consider a singular foliation $\mathcal{F}$ on a surface $\Sigma$ without boundary.

\begin{itemize}
    \item Any segment contained in a leaf of $\mathcal{F}$ will be called an $\mathcal{F}$-\emph{segment}, or simply a \emph{segment of $\mathcal{F}$}.
    
    \item For any $x\in\Sigma$, we will denote by $\mathcal{F}(x)$ the leaf of $\mathcal{F}$ containing $x$.
    
    \item For any $x\in\Sigma$, the closure (with respect to the leaf topology) of a connected component of $\mathcal{F}(x)-\{x\}$ will be called an \emph{$\mathcal{F}$-separatrix} of $x$.
\end{itemize}
\end{defi}
\begin{defi}[Polygons and standard polygons]\label{d.standardpolygon}
 Recall that for any $p\in \mathbb{N}^*$ the unique singular point inside the bifoliated plane $(\mathbb{R}^2,\mathcal{F}_{h}^p,\mathcal{F}_{v}^p)$ lies at the origin and is a $p$-prong singularity. 
 
 A closed topological disk $P$ in $(\mathbb{R}^2,\mathcal{F}_{h}^p,\mathcal{F}_{v}^p)$ will be called a \emph{polygon} if it is bounded by segments of $\mathcal{F}_{h}^p$ and $\mathcal{F}_{v}^p$ that alternate between them (see Figure \ref{f.polygons}). We will say that the polygon $P$ is a \emph{standard polygon} if one of the two following conditions is verified:
    \begin{itemize}
        \item $P$ does not contain the origin in its interior and its boundary consists of $2$ segments of $\mathcal{F}_{h}^p$ and $2$ segments of $\mathcal{F}_{v}^p$. In this case, $P$ is trivially bifoliated and is called a \emph{rectangle}.
        \item $P$ contains the origin in its interior and its boundary consists of $p$ segments of $\mathcal{F}_{h}^p$ and $p$ segments of $\mathcal{F}_{v}^p$. In this case, we will call $P$ a \emph{standard $2p$-gon}.
    \end{itemize}

Consider now $\mathcal{F}, \mathcal{G}$ two transverse singular foliations on a surface $\Sigma$ with empty boundary. A closed topological disk $P$ in $(\Sigma, \mathcal{F},\mathcal{G})$ will be called a \emph{polygon} (resp. \emph{standard polygon}, \emph{rectangle}, \emph{standard $2p$-gon}) if there exists $U\subset \Sigma$ a neighborhood of $P$, $p\in \mathbb{N}^*$, $V$ a neighborhood of the origin in $(\mathbb{R}^2,\mathcal{F}_{h}^p,\mathcal{F}_{v}^p)$ and a homeomorphism $h:U\rightarrow V$ such that: 
\begin{itemize}
    \item $h(\mathcal{F}\cap U)=\mathcal{F}_{h}^p\cap V$ and $h(\mathcal{G}\cap U)=\mathcal{F}_{v}^p\cap V$
    \item $h(P)$ is a polygon (resp. standard polygon, rectangle, standard $2p$-gon) in $(\mathbb{R}^2, \mathcal{F}_{h}^p,\mathcal{F}_{v}^p)$
\end{itemize}
\end{defi}

As an immediate consequence of Definitions \ref{d.transversefolisurfaces} and \ref{d.standardpolygon}, we have that two singular foliations $\mathcal{F}, \mathcal{G}$ on a surface $\Sigma$ with empty boundary are transverse if and only if every point of $\Sigma$ is contained in the interior of a standard polygon in $(\Sigma, \mathcal{F}, \mathcal{G})$. 

\begin{figure}[h]

  \begin{minipage}[ht]{0.4\textwidth}
    \centering 
     \vspace{0.8cm}
    \hspace{-0.5cm}
    \includegraphics[width=0.28\textwidth]{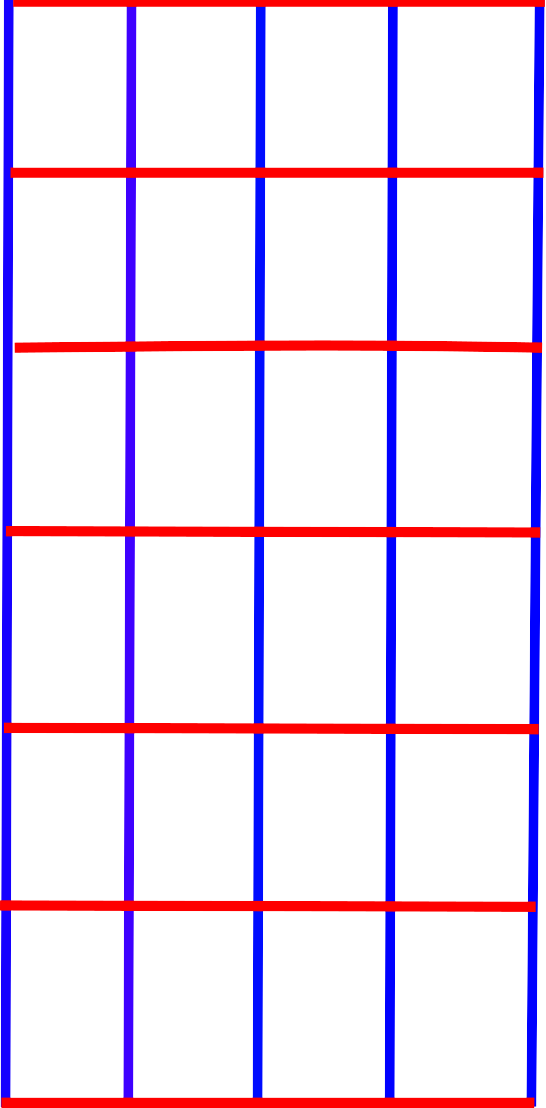}
  \hspace{-1cm}
    \caption*{\quad (a)}
    
  \end{minipage}
 \begin{minipage}[ht]{0.4\textwidth}
 \centering
 \vspace{0.9cm}
    \includegraphics[width=0.55\textwidth]{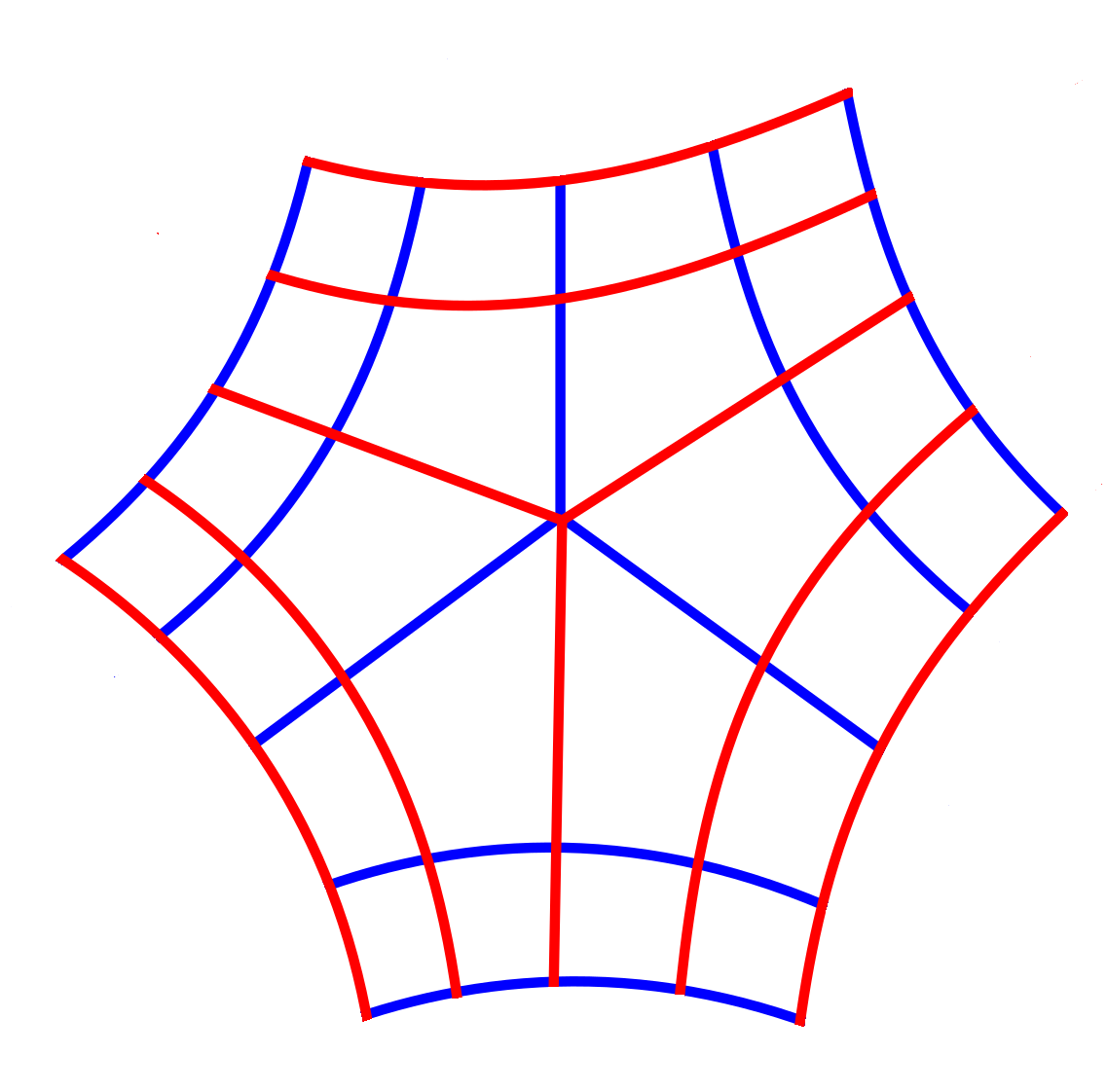}
    
    \caption*{(b)}
    
  \end{minipage}
\caption{Two standard polygons in $\Sigma$.}
\label{f.polygons}
  \end{figure}

In this paper, we will be mainly interested in pairs of transverse singular foliations on the plane with no $1$-prong singularities. The previous pairs of singular foliations are known to satisfy many topological properties : 

\begin{prop}\label{p.propertiesoffoliformarkovianactions}
Consider $\mathcal{F}$ and $\mathcal{G}$ two transverse singular foliations on a topological plane $\mathcal{P}$ with no $1$-prong singularities. Let $\text{Sing}:= \text{Sing}(\mathcal{F})=\text{Sing}(\mathcal{G})$. We have that 
    \begin{enumerate}
        \item The foliations $\mathcal{F}$ and $\mathcal{G}$ do not admit compact leaves. Moreover, if $\sigma\in \text{Sing}$, then $\sigma$ does not admit compact $\mathcal{F}$-separatrices or compact $\mathcal{G}$-separatrices.
        \item Each leaf of $\mathcal{F}$ or $\mathcal{G}$ is closed and properly embedded in $\mathcal{P}$. More specifically, a regular leaf of $\mathcal{F}$ or $\mathcal{G}$ is a properly embedded line in  $\mathcal{P}$. Similarly, if $\sigma\in Sing$ is a $p$-prong singularity, then $\sigma$ admits $p$ distinct $\mathcal{F}$-separatrices and $p$ distinct $\mathcal{G}$-separatrices, each one of which is a properly embedded closed half-line in $\mathcal{P}$.
        \item Any regular leaf of $\mathcal{F}$ or $\mathcal{G}$ separates $\mathcal{P}$ into 2 simply connected components. Similarly, if $\sigma\in Sing$ is a $p$-prong singularity of $\mathcal{F}$ and $\mathcal{G}$, then $$\mathcal{P}-\mathcal{F}(\sigma) ~~\big(\text{resp. }\mathcal{P}-\mathcal{G}(\sigma),~ \mathcal{P}-(\mathcal{F}(\sigma)\cup \mathcal{G}(\sigma))\big)$$ consists of $p$ (resp. $p$, $2p$) simply connected components, the closure of each one of which is homeomorphic to a closed half-plane. 
        \item Every leaf of  $\mathcal{F}$ intersects at most once any leaf of $\mathcal{G}$.
        \item Any leaf of  $\mathcal{F}$  (resp. $\mathcal{G}$) intersects a standard polygon in $(\mathcal{P}, \mathcal{F}, \mathcal{G})$ at most along a unique $\mathcal{F}$-segment (resp. $\mathcal{G}$-segment).

    \end{enumerate}
\end{prop}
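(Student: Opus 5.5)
The natural tool is a Poincaré–Hopf / index argument. Since there are no $1$-prong singularities, every singularity has negative index. This rules out "disk-like" configurations bounded by leaf segments, and all five items follow by reducing to the nonexistence of such configurations.

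\emph{Step 1 (key lemma).} I would first show that there is no closed topological disk $D\subset\mathcal{P}$ whose boundary is a piecewise-leaf curve consisting of at most one $\mathcal{F}$-segment and at most one $\mathcal{G}$-segment. This includes the cases of a closed loop inside a single leaf, and of a bigon formed by an $\mathcal{F}$-segment and a $\mathcal{G}$-segment meeting at two points. The plan is to double $D$ along its boundary, or equivalently apply the Euler–Poincaré formula for foliations on a disk with corners. Interior $p$-prong singularities contribute $(2-p)/2$, and since $p\geq 3$ this is $\le -\tfrac12$. Boundary corners contribute at most $\tfrac14$ each (a convex corner), and a smooth closed leaf boundary contributes $0$. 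The total must equal $\chi(D)=1$. With at most two corners the sum is at most $\tfrac12<1$, a contradiction. Possible singular points lying on the boundary only make the count more negative, so I would treat them by the same local chart count. This is the step I expect to be the main obstacle. Making the index formula rigorous for $C^0$ singular foliations requires either a careful argument via an approximating line field, or a combinatorial replacement: decompose $D$ into standard polygons and count prongs and corners. I would try the combinatorial version first.

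\emph{Step 2 (items (1), (2), (4)).} A compact leaf, or a compact separatrix, would produce a loop in one leaf and hence, by Jordan, a disk contradicting Step 1. Suppose an $\mathcal{F}$-leaf met a $\mathcal{G}$-leaf twice. Taking two consecutive intersection points along the leaves gives a bigon, and a Jordan disk bounded by one segment of each type, again contradicting Step 1. This proves (4). For (2), a leaf or separatrix is injectively immersed and non-compact. If it were not properly embedded, it would accumulate on some point $x$. Inside a standard polygon around $x$ it would then cross a transverse $\mathcal{G}$-segment through $x$ at least twice, which is impossible by (4). So leaves are closed and properly embedded, and each half-line separatrix is proper. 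The same local argument shows the $p$ separatrices at a $p$-prong are distinct.

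\emph{Step 3 (items (3), (5)).} A properly embedded line in the plane separates it into two components, each homeomorphic to an open half-plane (Schoenflies in the compactification $S^2$). For a singular leaf, the union of $p$ proper half-lines from $\sigma$ cuts $\mathcal{P}$ into $p$ sectors; adding the $\mathcal{G}$-separatrices, which alternate with them in the local model, gives $2p$ sectors. Each sector has closure a closed half-plane, again by Schoenflies applied in the one-point compactification. For (5), suppose a leaf met a standard polygon $P$ in two disjoint segments. Joining them inside $P$ by a transverse segment would give a transverse segment meeting the leaf twice, contradicting (4). In the rectangle case this is immediate. For a $2p$-gon I would route the transverse arc through sectors, using the local model $\mathcal{D}_p$.
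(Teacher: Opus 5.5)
The paper gives no proof of this proposition. It calls the statement classical and refers to \cite{monstre} (Proposition 2.63), so there is no in-text argument to compare with. Your Euler--Poincar\'e route is the standard one, and it is correct; what you gain over the citation is a self-contained argument.

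The logical chain holds. Step 1 rules out loops inside a leaf. This includes homoclinic loops through a singularity, where $D$ occupies an even number $k\ge 2$ of quadrants and so contributes $\le 0$. It also rules out $\mathcal{F}$--$\mathcal{G}$ bigons. Item (4) does not secretly need properness: local charts show that the intersection points of a compact $\mathcal{F}$-arc with a compact $\mathcal{G}$-arc are isolated, hence finite, so two consecutive ones bound a Jordan bigon. Item (2) then follows from (4) in a chart around an accumulation point. Item (3) follows from Schoenflies in $S^2$, using $\mathcal{F}(\sigma)\cap\mathcal{G}(\sigma)=\{\sigma\}$ (from (4)) to get the $2p$ sectors. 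Item (5) follows from (4).

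The step you flag is the only one needing real work, and the combinatorial version does go through.
\begin{itemize}
\item Cover $D$ by finitely many standard polygons and cut along their sides and $\partial D$. This gives a finite graph, and each complementary piece lies in a single chart.
\item Refine each piece inside its chart by a grid together with the prongs. Now $D$ is tiled by rectangles, with every singular point of $D$ a vertex.
\item Each rectangle $f$ satisfies $1=\sum_{v\in\partial f}\frac{2-k_{f,v}}{4}$. Sum over faces and regroup by vertices. An interior vertex of degree $d$ at a $q$-prong point contributes $\frac{d-2}{2}+\frac{2-q}{2}$. A boundary vertex contributes $\frac{d-2}{2}+\frac{2-k_v}{4}$.
\item Since $V-E+F=1$, this yields exactly $1=\sum_{\sigma\in \mathrm{int}\,D}\frac{2-p_\sigma}{2}+\sum_{v\in\partial D}\frac{2-k_v}{4}$, which is the formula you need.
\end{itemize}

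Two small corrections.
\begin{itemize}
\item The distinctness of the $p$ separatrices at a $p$-prong does not come from the properness argument. It comes from (1): a component of $\mathcal{F}(\sigma)-\{\sigma\}$ carrying two prongs has compact closure.
\item In (5), for a standard $2p$-gon, a general transverse arc does not contradict (4), which only forbids two intersections with a single $\mathcal{G}$-leaf. Use instead that every non-singular $\mathcal{F}$-plaque of the $2p$-gon crosses the $\mathcal{G}$-star of its center. Then two plaques of one leaf would meet $\mathcal{G}(\sigma)$ twice.
\end{itemize}
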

The above proposition is classical; a proof can be found, for instance, in \cite{monstre} (see Proposition 2.63). We conclude this section with the following definition, based on the above proposition. 

\begin{defi}
Let $\mathcal{F}$ and $\mathcal{G}$ be two transverse singular foliations on a topological plane $\mathcal{P}$ with no $1$-prong singularities.

\begin{itemize}
    \item For any $x\in \mathcal{P}$, the closure of a connected component of $\mathcal{P}-(\mathcal{F}(x)\cup \mathcal{G}(x))$ is called a \emph{quadrant} of $x$ in $(\mathcal{P},\mathcal{F},\mathcal{G})$. Moreover, a neighborhood of $x$ inside one of its quadrants $Q$ is called a \emph{germ of $x$ in $Q$}.
    \item For any $x\in \mathcal{P}$, the boundary of any connected component of $\mathcal{P}-\mathcal{F}(x)$ will be called a \emph{face} of $\mathcal{F}(x)$.
    \item Given a standard polygon $P$ in $(\mathcal{P},\mathcal{F},\mathcal{G})$ and $x\in P$, the connected component of $\mathcal{F}(x)\cap P$ (resp. $\mathcal{G}(x)\cap P$) containing $x$ is called the \emph{$\mathcal{F}$-leaf} (resp. \emph{$\mathcal{G}$-leaf}) of $x$ in $P$.

\end{itemize}
\end{defi}
\subsection{Perfect fits and ideal quadrilaterals}
In this small section, we define two specific leaf configurations that can appear inside a bifoliated plane. Consider $\mathcal{P}$ a topological plane endowed with a pair $(\mathcal{F}^s,\mathcal{F}^u)$ of transverse singular foliations with no $1$-prong singularities. 

We define a \emph{perfect fit} as a pair of leaves in $(\mathcal{F}^s,\mathcal{F}^u)$ that do not intersect, but almost intersect (see Figure \ref{f.leafconf}a):
\begin{defi}\label{d.perfectfit}
    Fix $L^s\in \mathcal{F}^s$, $L^u\in \mathcal{F}^u$ such that $L^s\cap L^u=\emptyset$. Denote by $D_s$ (resp. $D_u$) the connected component of $\mathcal{P}-L^s$ (resp. $\mathcal{P}-L^u$) containing $L^u$ (resp. $L^s$). We will say that $L^s$ and $L^u$ form a \emph{perfect fit} if: 
    \begin{enumerate}
        \item for every small $\mathcal{F}^u$-segment $I^u$ intersecting $L^s$ and every $x\in I^u\cap D_s\cap D_u$ we have that $\mathcal{F}^s(x)\cap L^u\neq \emptyset$
        \item for every small $\mathcal{F}^s$-segment $I^s$ intersecting $L^u$ and every $x\in I^s\cap D_s\cap D_u$ we have that $\mathcal{F}^u(x)\cap L^s\neq \emptyset$ 
    \end{enumerate}
\end{defi}
\begin{defi}\label{d.ideal}
    A \emph{totally ideal quadrilateral} (see Figure \ref{f.leafconf}b) is an open set $U$ in $\mathcal{P}$ satisfying the following properties: 
\begin{itemize}
    \item there exist four leaves $l_1,l_3\in \mathcal{F}^s$ and $l_2,l_4\in \mathcal{F}^u$ such that $U$ is bounded by the disjoint union of four faces contained respectively in $l_1,l_2,l_3,l_4$ 
    \vspace{0.1cm}
    
    \item $l_i$ makes a perfect fit with $l_{i+1}$ (the indexes are considered modulo 4)
      \vspace{0.1cm}
      
    \item for any leaf $l\in \mathcal{F}^s$ (resp. $l\in \mathcal{F}^u$) that intersects $U$ we have that $l$ intersects $l_2$ (resp. $l_1$) if and only if it intersects $l_4$ (resp. $l_3$)
\end{itemize}
\end{defi}
\begin{figure}[h]

  \begin{minipage}[ht]{0.4\textwidth}
    \centering 
     \vspace{0.8cm}
    \hspace{-0.7cm}
    \includegraphics[width=0.6\textwidth]{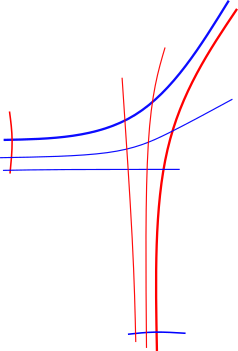}

    \caption*{\quad (a)}
    
  \end{minipage}
 \begin{minipage}[ht]{0.4\textwidth}
 \centering
 \vspace{1.9cm}
    \includegraphics[width=0.56\textwidth]{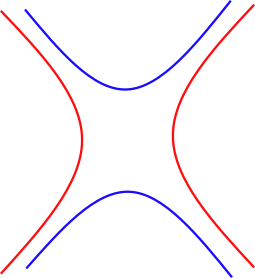}
 \vspace{0.65cm}    
    \caption*{(b)}
    
  \end{minipage}
\caption{On the right a perfect fit and on the left a totally ideal quadrilateral.}
\label{f.leafconf}
  \end{figure}

\subsection{Strong Markovian families}
Consider $\mathcal{P}$ a topological plane endowed with a pair $(\mathcal{F}^s,\mathcal{F}^u)$ of transverse singular foliations  with no $1$-prong singularities. Denote by $\rho:G \rightarrow \text{Homeo}(\mathcal{P})$ a faithful and orientation preserving $C^0$ action of a countable group $G$ on the plane preserving both $ \mathcal{F}^s$ and $\mathcal{F}^u$.

\begin{defi}\label{d.subrectangleplane}
Let $R$ be a rectangle in $\mathcal{P}$ (see Definition \ref{d.standardpolygon}). We will call the union of the two $\mathcal{F}^s$-segments in the boundary of $R$ the \emph{$\mathcal{F}^s$-boundary} of $R$ and we will denote it by $\partial^sR$. We similarly define $\partial^uR$, the \emph{$\mathcal{F}^u$-boundary} of $R$. Any connected component of $\partial^sR$  (resp. $\partial ^uR$) will be called an \emph{$\mathcal{F}^s$-boundary} (resp. \emph{$\mathcal{F}^u$-boundary}) \emph{component} of $R$.

Moreover, any subrectangle $R'$ of $R$ such that $\partial^sR'\subset \partial^sR $ will be called a \emph{vertical subrectangle} of $R$. If furthermore $R' \neq R$, then we will call $R'$ a \emph{non-trivial} vertical subrectangle of $R$. We define similarly \emph{horizontal subrectangles} and \emph{non-trivial horizontal subrectangles} of $R$.

\end{defi}

Consider $\Sigma$ any closed surface, $f$ a pseudo-Anosov homeomorphism on $\Sigma$ and $\mathcal{M}$ a Markov partition of $f$. Denote by $\widetilde{\mathcal{M}}_{markov}$ the lifts of all the rectangles in $\underset{n\in\mathbb{Z}}{\cup}f^n(\mathcal{M})$ on $\widetilde{\Sigma}\approx \mathbb{R}^2$, the universal cover of $\Sigma$ (see Figure \ref{f.famillemarkov}). The family $\widetilde{\mathcal{M}}_{markov}$ is invariant by the action of the deck transformation group of the cover $\widetilde{\Sigma}\rightarrow \Sigma$ and also by the action of any lift of $f$ on $\widetilde{\Sigma}$.

We will not define these notions here, but this example provides some intuition for the definition of a Markovian family, whose axioms are designed to capture the main properties of $\widetilde{\mathcal{M}}_{markov}$. 

\begin{defi}\label{d.markovfamily}
A \emph{Markovian family of $\rho$ in $(\mathcal{P},\mathcal{F}^s,\mathcal{F}^u)$} is a $\rho$-invariant set of rectangles $\mathcal{R}=(R_i)_{i \in I}$ such that: 
\begin{enumerate}
\item (Finiteness axiom) $\mathcal{R}$ is finite up the action by $\rho$ 
\item (Markovian intersection axiom) For every two distinct rectangles $R_i,R_j\in\mathcal{R}$, if $\overset{\circ}{R_i } \cap \overset{\circ}{R_j} \neq \emptyset$, then, up to changing the roles of $R_i$ and $R_j$, $R_i \cap R_j$ is a non-trivial horizontal subrectangle of $R_i$ and a non-trivial vertical subrectangle of $R_j$ 
\item (Finite return time axiom) For any point $x\in \mathcal{P}$ and any small germ of $x$, say $U$, inside one of its quadrants, there exists $R\in \mathcal{R}$ such that $U\subset R$. 
\end{enumerate} 
Furthermore, we will call $\mathcal{R}$ a \emph{strong Markovian family of $\rho$} if it satisfies the finiteness and the Markovian intersection axioms, and also the two following conditions: 
\begin{enumerate}
    \item[(3$'$)] (Strong finite return time axiom)  For any point $x\in \mathcal{P}$ and any small germ of $x$, say $U$, inside one of its quadrants, there exist $R, R_h, R_v\in \mathcal{R}$ such that $R_h\cap R$ is a non-trivial horizontal subrectangle of $R$, $R_v\cap R$ is a non-trivial vertical subrectangle of $R$ and $U\subset R\cap R_h\cap R_v$. 
    \item[(4)](Expansivity axiom) If $(R_n)_{n\in \mathbb{N}}$ is a sequence of rectangles in $\mathcal{R}$ such that $R_{i+1}\cap R_i$ is a non-trivial vertical (resp. horizontal) subrectangle of $R_i$, then $\underset{i\geq 0 }{\cap}R_i$ is equal to the $\mathcal{F}^u$-leaf (resp. $\mathcal{F}^s$-leaf) of some point of $ R_0$
\end{enumerate}
\end{defi}
\begin{figure}
    \centering
    \includegraphics[scale=0.3]{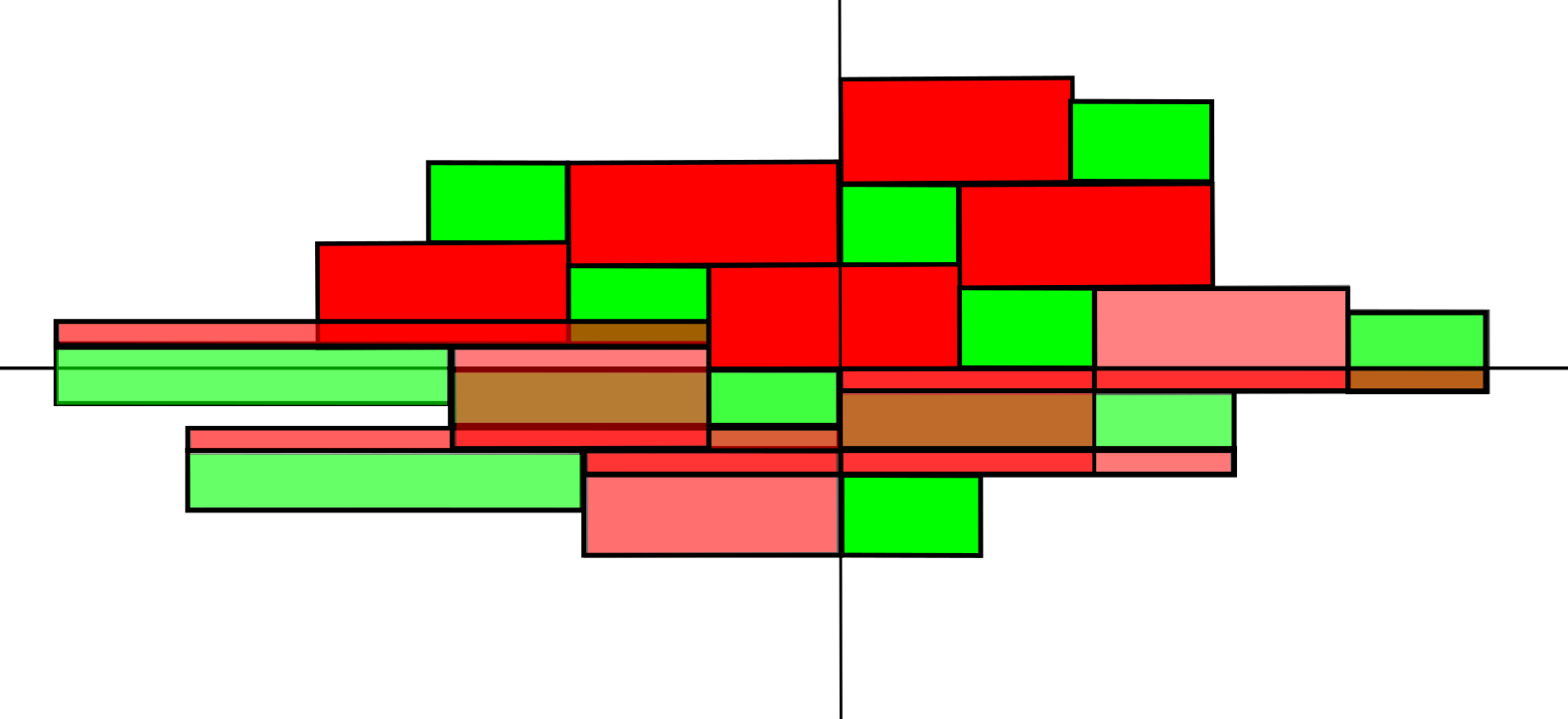}
    \caption{An example of a Markovian family on the plane.}
    \label{f.famillemarkov}
\end{figure}
\subsection{Predecessors and successors for strong Markovian families}
Following the notations of the previous section, consider $\mathcal{R}=(R_i)_{i\in I}$ a strong Markovian family in $(\mathcal{P}, \mathcal{F}^s,\mathcal{F}^u)$ that is invariant by $\rho$. 

Building on the example described earlier, recall that $\mathcal{M}$ was defined as a Markov partition of the homeomorphism $f$ on $\Sigma$; hence, any two rectangles in $\mathcal{M}$ have disjoint interiors. Thanks to the previous fact, for any two rectangles of $\widetilde{\mathcal{M}}_{markov}$ whose interiors intersect, there exist $k\neq l$ such that the projection of the first rectangle on $\Sigma$ belongs in $f^k(\mathcal{M})$ and the projection of the second in $f^l(\mathcal{M})$. This allows us to partially order the rectangles in $\widetilde{\mathcal{M}}_{markov}$. A similar result holds for general Markovian families: 

\begin{lemm}\label{l.existenceofpredecessors}
For any rectangle $R \in \mathcal{R}$ there exists a unique finite collection of rectangles $R_1,...,R_n \in \mathcal{R}$ intersecting $R$ along non-trivial vertical subrectangles and such that:
\begin{enumerate}
\item $R_1,...,R_n$ are maximal for the previous property: any $R' \in \mathcal{R}$ intersecting $R$ along a non-trivial vertical subrectangle satisfies $R' \cap R \subseteq R_i \cap R$ for some $i \in \llbracket 1, n \rrbracket$ 
\item $R_1,...,R_n$ have disjoint interiors 
\item The $R_1,...,R_n$ cover $R$: $\overset{n}{\underset{i=1}{\cup}} R_i \cap R = R$ 
\end{enumerate}
\end{lemm}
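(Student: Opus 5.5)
We write $\mathcal{V}(R)$ for the set of rectangles $R'\in\mathcal{R}$ with $R'\cap R$ a non-trivial vertical subrectangle of $R$. The plan has four steps: show $\mathcal{V}(R)\neq\emptyset$, find maximal elements, show that they have disjoint interiors and that there are finitely many of them, and show that they cover $R$.

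\textbf{Step 1: existence of vertical successors.} We apply the strong finite return time axiom (3$'$) to a germ $U$ of a point $x$ in the interior of $R$. This gives $R_0,R_h,R_v\in\mathcal{R}$ with $U\subset R_0\cap R_h\cap R_v$. We would like $R_v$ to meet $R$ vertically. Since $\overset{\circ}{R}\cap \overset{\circ}{R_0}\neq\emptyset$, the Markovian intersection axiom says that $R$ and $R_0$ are either equal or intersect in the horizontal/vertical pattern, and the same holds for $R_v$ versus $R$. I expect a small case analysis here, using (3$'$) to rule out that every rectangle through $x$ meets $R$ only horizontally. A cleaner alternative is to apply the expansivity axiom (4): if no rectangle crossed $R$ vertically, the chains needed in (4) could not be formed. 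I would settle for the direct case analysis.

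\textbf{Step 2: maximal elements.} Order $\mathcal{V}(R)$ by inclusion of the traces $R'\cap R$. Each trace is a vertical subrectangle of $R$, i.e. of the form $I\times[\text{full height}]$ in a product chart $R\cong[0,1]^2$, so it is determined by its stable interval $I\subsetneq[0,1]$. The key claim is that if two such traces have intersecting interiors, then one is contained in the other. To prove it, note that if $R',R''\in\mathcal{V}(R)$ have overlapping traces, then $\overset{\circ}{R'}\cap\overset{\circ}{R''}\neq\emptyset$, so by the Markovian intersection axiom $R'\cap R''$ is horizontal in one and vertical in the other. Say it is horizontal in $R'$ and vertical in $R''$. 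Since both cross $R$ with full $\mathcal F^u$-height, $R''$ is "longer" in the $\mathcal F^u$ direction. Comparing the $\mathcal{F}^s$-boundaries inside $R$ then forces the stable width of $R''\cap R$ to lie inside that of $R'\cap R$, giving nesting of traces.

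Next we need increasing chains to stabilize. Here I would use the finiteness axiom, which says $\mathcal{R}$ is finite up to $\rho$, together with properness of the action on rectangles meeting the compact set $R$. The intended consequence is that only finitely many rectangles of $\mathcal{R}$ meet $\overset{\circ}{R}$ in a non-trivial way. I am not fully sure of this; if it fails, I would instead use (4) to bound strictly increasing chains. Granting local finiteness, maximal traces exist. Because of the nesting property, distinct maximal traces have disjoint interiors. Finiteness then follows either from local finiteness or from compactness once covering is established.

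\textbf{Step 3: covering.} This is the main obstacle. Take $x\in R$ not covered by the maximal traces. Applying (3$'$) to germs at $x$ in each of its quadrants yields rectangles whose traces contain a neighbourhood of $x$ within $R$. These traces lie in some maximal trace, which is a contradiction. Since the union of finitely many closed traces is closed, the traces cover $R$. Uniqueness is immediate: the collection is exactly the set of distinct maximal traces. If two rectangles give the same maximal trace, choose one representative; the statement only concerns $R_i\cap R$, so this is harmless.
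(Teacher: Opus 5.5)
\textbf{The main gap is in Steps 1 and 3, and it is the whole content of the lemma.} Fix $x\in R$, a quadrant $Q$ of $x$ in which $R$ contains a germ, and a small germ $U\subset R$ of $x$ in $Q$. Any two rectangles of $\mathcal{R}$ containing $U$ have intersecting interiors. By the Markovian intersection axiom, the rectangles containing $U$ are therefore totally ordered by $\mathcal{F}^u$-length, and $R$ is one of them. What you must produce is one lying strictly above $R$, i.e.\ crossing $R$ vertically. Axiom (3$'$) only gives $R_h,R_0,R_v\supset U$ with $R_h$ below $R_0$ below $R_v$. It says nothing about where $R$ sits in this chain.

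The configuration ``$R_0\cap R$ is a non-trivial horizontal subrectangle of $R$ and $R_v=R$'' (or $R_v$ still below $R$) is compatible with all the axioms. So the case analysis you announce in Step 1 cannot close. In Step 3, the rectangles obtained from (3$'$) may all be $R$ itself or cross $R$ horizontally, so their traces need not be vertical subrectangles of $R$ at all.

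This step does not seem to follow from (1), (2), (3$'$), (4) as written. Take a pseudo-Anosov map $f$ of a closed surface $\Sigma$ with Markov partition $\mathcal{M}$, $k$ large, and the family of all lifts to $\widetilde{\Sigma}$ of the rectangles of $f^{-k}(\mathcal{M})$, $\mathcal{M}$ and $f^{k}(\mathcal{M})$, acted on by the deck group only. This family appears to satisfy all four axioms, with (4) holding vacuously since there are no infinite chains. Yet nothing crosses a lift of a rectangle of $f^{k}(\mathcal{M})$ vertically.

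What is missing is an input guaranteeing that every germ of $x$ in $Q$ lies in an infinite sequence $(T_n)$ with $T_{n+1}\cap T_n$ a non-trivial vertical subrectangle of $T_n$ and each $T_n$ containing a germ of $x$ in $Q$. This is the kind of property encoded by the converse clause in item (4) of Definition \ref{d.markovianactioninformal}. With it, Step 3 goes through: by (4), $\bigcap_n T_n$ is an $\mathcal{F}^u$-segment through $x$, so some $T_n$ cannot contain the $\mathcal{F}^s$-leaf of $R$ through $x$. Since $T_n$ meets $\inte{R}$, the Markovian intersection axiom forces $T_n\cap R$ to be a non-trivial vertical subrectangle of $R$ containing a germ of $x$ in $Q$.

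\textbf{Secondary points.}
\begin{itemize}
\item Local finiteness is false: already in $\widetilde{\mathcal{M}}_{markov}$, infinitely many rectangles meet $\inte{R}$. Your fallback via (4) does work. If $T_1\cap R\subsetneq T_2\cap R\subsetneq\cdots$ with $T_i\in\mathcal{V}(R)$, then $T_{i+1}\cap T_i$ is a non-trivial horizontal subrectangle of $T_i$, and $\bigcap_i T_i\supseteq T_1\cap R$ has non-empty interior, contradicting (4).
\item Compactness alone does not give finitely many maximal traces. Closed vertical strips with disjoint interiors can cover $R$ while accumulating on an $\mathcal{F}^u$-segment. You need every germ of every point of $R$ lying in $R$ to be contained in a single trace, which is again the germ-wise covering above. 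After that, compactness gives a finite subcover, and disjointness of interiors rules out further maximal traces.
\item In the nesting argument the roles are reversed. If $R'\cap R''$ is horizontal in $R'$ and vertical in $R''$, then $R'$ is the one longer in the $\mathcal{F}^u$-direction, and $R'\cap R\subseteq R''\cap R$. The conclusion is unaffected.
\item There is no need to ``choose representatives''. Two distinct elements of $\mathcal{V}(R)$ with overlapping traces have different $\mathcal{F}^s$-widths, so a trace determines its rectangle. This is exactly what gives uniqueness of $R_1,\dots,R_n$.
\end{itemize}
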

\noindent Naturally, the analogue of the previous lemma for horizontal subrectangles is also true. 

Lemma \ref{l.existenceofpredecessors} was proven for strong Markovian families preserved by Anosov-like strong Markovian actions in \cite{monstre} (see Lemma 4.2). However, the proof does not rely on either the Anosov-like or the strong Markovian nature of the action, and therefore carries over verbatim to our setting.

\begin{defi}\label{d.successor}
For any $R\in \mathcal{R}$, we will say that $R'$ is a \emph{predecessor} (resp. \emph{successor}) of $R$ if $R'\cap R$ is a non-trivial vertical (resp. horizontal) subrectangle of $R$ and $R'$ is maximal for this property in the sense of the previous lemma. 

We will say that $R'$ is a predecessor of \emph{$2$-nd generation} of $R$, if $R'$ is a predecessor of a predecessor of $R$. We define similarly a predecessor (resp. successor) of \emph{$n$-th generation} for any $n \geq 3$. By convention, a predecessor (resp. successor) of \emph{$1$-st generation} of $R$, is the same thing as a predecessor (resp. successor) of $R$. Also, the unique predecessor (resp. successor) of $R$ of \emph{generation $0$} is $R$ itself. 

\end{defi}

\begin{lemm}\label{l.npredecessor}
Let $R, R' \in \mathcal{R}$ be such that $R' \cap R$ is a non-trivial vertical subrectangle of $R$. Then there exists a unique $n \in \mathbb{N}$ such that $R'$ is a predecessor of $n$-th generation of $R$. Furthermore, $R$ is a successor of $n$-th generation of $R'$ for the same integer $n$. Finally, there exists a unique sequence in $\mathcal{R}$, say $R_0=R, R_1,\ldots,R_n=R'$, such that $R_{i+1}$ is a predecessor of $R_i$ for every $i\in \llbracket 0,n-1\rrbracket$. 

\end{lemm}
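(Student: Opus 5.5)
We need to prove Lemma \ref{l.npredecessor}. The plan is to build the chain of predecessors one step at a time, use the expansivity axiom to show the chain stops after finitely many steps, and then read off uniqueness from the disjointness part of Lemma \ref{l.existenceofpredecessors}.

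\textbf{Existence of the chain.} Set $R_0=R$. By Lemma \ref{l.existenceofpredecessors}(1), $R'\cap R_0$ is contained in $R_1\cap R_0$ for some predecessor $R_1$ of $R_0$.
\begin{itemize}
\item If $R_1=R'$, we stop.
\item Otherwise we want $R'\cap R_1$ to be a non-trivial vertical subrectangle of $R_1$. First, $R'$ and $R_1$ have intersecting interiors, since both contain the vertical subrectangle $R'\cap R_0$ of $R_0$, which has non-empty interior. So by the Markovian intersection axiom, $R'\cap R_1$ is either a non-trivial vertical or a non-trivial horizontal subrectangle of $R_1$.
\item To rule out the horizontal case, compare the $\mathcal{F}^u$-extents. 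Both $R'$ and $R_1$ cross $R_0$ "vertically", so they contain full $\mathcal{F}^u$-leaves of $R_0$; a horizontal subrectangle of $R_1$ would not contain the full $\mathcal{F}^u$-leaves of $R_1\cap R_0$. This step uses Proposition \ref{p.propertiesoffoliformarkovianactions}(4)--(5), i.e. that leaves meet rectangles in single segments.
\end{itemize}
Iterating produces a sequence $R_0,R_1,\dots$ in which each $R_{i+1}$ is a predecessor of $R_i$ and $R'\cap R_i\subseteq R_{i+1}\cap R_i$ for all $i$.

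\textbf{Termination.} Suppose the process never reaches $R'$. Then the $R_i$ form an infinite sequence with $R_{i+1}\cap R_i$ a non-trivial vertical subrectangle of $R_i$. By the expansivity axiom, $\bigcap_i R_i$ is a single $\mathcal{F}^u$-leaf of a point of $R_0$. On the other hand, by induction $R'\cap R_0\subseteq R_i$ for every $i$, because each step only cuts in the $\mathcal{F}^s$-direction while keeping $R'\cap R_0$. But $R'\cap R_0$ has non-empty interior, so it cannot lie inside a single $\mathcal{F}^u$-leaf. This contradiction shows the chain stops at some finite $n$ with $R_n=R'$.

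\textbf{Uniqueness.} The predecessors of a given rectangle have disjoint interiors by Lemma \ref{l.existenceofpredecessors}(2). Suppose two chains from $R$ to $R'$ first differ at step $i$, giving distinct predecessors $A\neq B$ of $R_{i-1}$. Both chains end at $R'$, so, arguing as above, $R'\cap R_{i-1}$ lies in both $A\cap R_{i-1}$ and $B\cap R_{i-1}$. These two sets have disjoint interiors while $R'\cap R_{i-1}$ has non-empty interior, which is impossible. So the chain is unique, and in particular so is $n$. (One should also check that two chains of different lengths cannot both reach $R'$; the same first-difference argument applies, and a chain cannot strictly continue past $R'$ and return to it.)

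\textbf{The successor statement.} We show that $R_i$ is a successor of $R_{i+1}$. We know $R_{i+1}\cap R_i$ is a vertical subrectangle of $R_i$; by the Markovian intersection axiom it is then a horizontal subrectangle of $R_{i+1}$. For maximality, suppose some successor $S$ of $R_{i+1}$ strictly contains $R_i\cap R_{i+1}$. Then $S\cap R_i$ is a vertical subrectangle of $R_i$ that contains $R_{i+1}\cap R_i$, which contradicts the maximality of $R_{i+1}$ as a predecessor of $R_i$. Reversing the chain then shows that $R$ is an $n$-th generation successor of $R'$.

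The main obstacle I expect is the first bullet-point step: excluding the case where $R'\cap R_i$ is horizontal in $R_i$. This needs a careful planar-topology argument about how vertical subrectangles nest, using the fact that leaves meet rectangles in connected segments.
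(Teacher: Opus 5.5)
Your plan is the same as the paper's: at each step take the unique predecessor of $R_i$ containing $R'\cap R_i$, stop the chain with the expansivity axiom, and get uniqueness from the disjointness of predecessors. Two steps, however, do not work as written.

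First, the step you single out cannot be done by comparing $\mathcal{F}^u$-extents. The reason you give is not correct: $R_1\cap R_0$ is itself a horizontal subrectangle of $R_1$ and contains all the $\mathcal{F}^u$-leaves of $R_1\cap R_0$. Moreover, if $R'\cap R_1$ were horizontal in $R_1$ and vertical in $R'$, the only $\mathcal{F}^u$-information would be that the $\mathcal{F}^u$-leaves of $R'$ are shorter than those of $R_1$, which contradicts nothing.

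The contradiction is in the $\mathcal{F}^s$-direction. Since $R'\cap R_0$ and $R_1\cap R_0$ are non-trivial vertical subrectangles of $R_0$, the Markovian intersection axiom makes them horizontal subrectangles of $R'$ and $R_1$ respectively. So for $z$ in the interior of $R'\cap R_0$, the $\mathcal{F}^s$-leaf of $z$ in $R'$ equals its $\mathcal{F}^s$-leaf in $R'\cap R_0$. That leaf is contained in the $\mathcal{F}^s$-leaf of $z$ in $R_1\cap R_0$, which is the $\mathcal{F}^s$-leaf of $z$ in $R_1$. If $R'\cap R_1$ were horizontal in $R_1$ and non-trivial vertical in $R'$, the $\mathcal{F}^s$-leaf of $z$ in $R_1$ would equal the one in $R'\cap R_1$. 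That is a proper sub-segment of the $\mathcal{F}^s$-leaf of $z$ in $R'$, which gives the contradiction.

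The same kind of comparison is missing in your successor paragraph. Take a successor $S\neq R_i$ of $R_{i+1}$ with $S\cap R_{i+1}\supseteq R_i\cap R_{i+1}$; this includes the equality case, which you skip. You must show two things:
\begin{itemize}
\item $S\cap R_i$ is vertical rather than horizontal in $R_i$ (compare $\mathcal{F}^u$-leaves);
\item $S\cap R_i$ strictly contains $R_{i+1}\cap R_i$ (the $\mathcal{F}^s$-leaves of $S$ are strictly longer than those of $R_{i+1}$).
\end{itemize}
Only then does the maximality of $R_{i+1}$ give a contradiction.

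Second, in the uniqueness paragraph, ``arguing as above'' does not yield $R'\cap R_{i-1}\subseteq A$. In the existence part this inclusion held by the choice of each $R_{i+1}$. Here, $A$ is the next term of an arbitrary predecessor chain ending at $R'$.

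You need a nesting fact: if $T_0,T_1,\dots,T_m$ with $m\geq 1$ are such that each $T_{j+1}$ is a predecessor of $T_j$, then $T_m\cap T_0\subseteq T_1\cap T_0$. It holds because every $\mathcal{F}^u$-leaf of $T_{j+1}$ crosses the horizontal subrectangle $T_{j+1}\cap T_j$ of $T_{j+1}$, and hence contains a full $\mathcal{F}^u$-leaf of $T_j$. Iterating, every $\mathcal{F}^u$-leaf of $T_m$ contains a full $\mathcal{F}^u$-leaf of $T_1$. That leaf in turn contains a full $\mathcal{F}^u$-leaf $\lambda$ of $T_0$ with $\lambda\subseteq T_1$. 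By Item (5) of Proposition \ref{p.propertiesoffoliformarkovianactions}, $\lambda$ is the whole intersection of that $\mathcal{F}^u$-leaf of $T_m$ with $T_0$, so $T_m\cap T_0\subseteq T_1$.

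With this fact your first-difference argument goes through. It also settles the different-length case. If one chain is a prefix of the other, what remains is a chain of positive length from $R'$ to $R'$, and nesting would give $R'\subseteq T_1\cap R'\subsetneq R'$.
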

\begin{lemm}\label{l.npredecessorsdisjoint}
Take $R \in \mathcal{R}$ and $N\in \mathbb{N}$. We have that the predecessors (resp. successors) of $R$ of generation $N$ have disjoint interiors. 

\end{lemm}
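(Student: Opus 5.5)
We argue by induction on $N$, treating predecessors; the statement for successors follows by exchanging the roles of $\mathcal{F}^s$ and $\mathcal{F}^u$, that is, of vertical and horizontal subrectangles. For $N=0$ there is nothing to prove. For $N=1$ the claim is item (2) of Lemma \ref{l.existenceofpredecessors}.

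For the inductive step, suppose the predecessors of generation $N$ of $R$ have pairwise disjoint interiors. Let $A$ and $B$ be two distinct predecessors of generation $N+1$, and suppose $\inte{A}\cap\inte{B}\neq\emptyset$. By Lemma \ref{l.npredecessor} there are unique chains $R=A_0,\dots,A_{N+1}=A$ and $R=B_0,\dots,B_{N+1}=B$ in which each rectangle is a predecessor of the previous one. Set $A_N=A'$ and $B_N=B'$; these are predecessors of generation $N$.

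\textbf{Case $A'=B'$.} Then $A$ and $B$ are two distinct predecessors of the same rectangle, so their interiors are disjoint by Lemma \ref{l.existenceofpredecessors}(2).

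\textbf{Case $A'\neq B'$.} By the induction hypothesis $\inte{A'}\cap\inte{B'}=\emptyset$. We want to show this is incompatible with $\inte{A}\cap\inte{B}\neq\emptyset$. The plan has three steps.

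\begin{enumerate}
\item \textbf{Extend through the predecessors.} The set $A\cap A'$ is a non-trivial vertical subrectangle of $A'$, hence a horizontal subrectangle of $A$. By the Markovian intersection axiom, $A\cap B$ is a horizontal subrectangle of one of $A,B$ and a vertical subrectangle of the other; without loss of generality it is a vertical subrectangle of $A$. Every $\mathcal{F}^u$-segment of $A\cap B$ then crosses $A$ fully from one $\mathcal{F}^s$-boundary component to the other. In particular it crosses the horizontal subrectangle $A\cap A'$, and it continues through $A'$, since $A'\cap A$ is vertical in $A'$. Intersecting with $B$ this way, we would like to find a point of $\inte{A'}$ arbitrarily close to $\inte{A}\cap\inte{B}$.
\item \textbf{Compare $A$ with $B$ and with $B'$.} Using the Markovian intersection axiom for the pair $(A,B)$, and then for $(A',B)$ or $(A,B')$, we aim to show that $\inte{A'}\cap\inte{B'}\neq\emptyset$. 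The key fact is that the stable extent of a predecessor contains that of its successor: $R'\cap R$ vertical in $R$ means $\partial^s(R'\cap R)\subset\partial^s R$, so $R'$ is stably longer than $R$.
\item \textbf{Reduce to the chains.} Alternatively, reduce to the first index $k$ at which the chains diverge. There, the predecessors of $A_{k-1}=B_{k-1}$ are distinct, so $A_k$ and $B_k$ have disjoint interiors. We then want to propagate this disjointness upward: if $\inte{A_{j}}\cap\inte{B_{j}}=\emptyset$, the aim is to show $\inte{A_{j+1}}\cap\inte{B_{j+1}}=\emptyset$.
\end{enumerate}

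The main obstacle is this propagation. Disjointness of the interiors of $A'$ and $B'$ does not formally prevent their predecessors from overlapping. A predecessor is stably longer, so $A$ and $B$ could a priori overlap "beside" $A'$ and $B'$.

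The approach I would try first is to show that if $\inte{A}\cap\inte{B}\neq\emptyset$, then $A\cap B$, being vertical in one of them, say $A$, makes $B$ intersect $A'$ along a vertical subrectangle of $A'$. This would make $B$ comparable with $A'$: by the maximality in Lemma \ref{l.existenceofpredecessors}(1) and the uniqueness of generation in Lemma \ref{l.npredecessor}, $B$ would be a predecessor of some generation of $A'$, and hence of $R$ of generation at least $N+2$. That contradicts the fact that $B$ has generation $N+1$, and uniqueness of the generation index forces $A=B$.

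The delicate point is to ensure that $B\cap A'$ is genuinely a non-trivial vertical subrectangle of $A'$, and not merely a horizontal one. This requires tracking how the $\mathcal{F}^s$-boundary of $A'$ sits relative to $B$, using that $A'$ and $B'$ both meet $R$ vertically with disjoint interiors inside $R$.
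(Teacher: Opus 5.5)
\textbf{Verdict: genuine gap.} Your argument does not close. The one step that carries the weight, namely that $B\cap A'$ is a non-trivial vertical subrectangle of $A'$, is flagged as ``the delicate point'' and never proved.

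The contradiction you then draw is also miscounted. Comparing $B$ with $A'$, which has generation $N$, makes $B$ a predecessor of $R$ of generation $N+m\geq N+1$, not $\geq N+2$. What that contradicts is $A'\neq B'$ (via uniqueness of the chain in Lemma \ref{l.npredecessor}), not $A\neq B$.

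The heuristic in your step 2, which made ``propagation'' look like the main obstacle, is reversed. If $R'\cap R$ is a vertical subrectangle of $R$, then $R'$ contains whole $\mathcal{F}^u$-segments of $R$. Its $\mathcal{F}^s$-extent is that of the proper vertical strip $R'\cap R$. So a predecessor is unstably longer and stably \emph{shorter}. This matches the expansivity axiom, where nested predecessors shrink to an $\mathcal{F}^u$-leaf. There is therefore no room for $A$ and $B$ to overlap ``beside'' $A'$ and $B'$.

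The missing observation is that Lemma \ref{l.npredecessor} applies directly to the overlapping pair. That makes both the induction and the detour through $A'$, $B'$ unnecessary.
Let $A\neq B$ be predecessors of $R$ of the same generation $N\geq 1$ with $\inte{A}\cap\inte{B}\neq\emptyset$.
\begin{enumerate}
\item After possibly swapping them, the Markovian intersection axiom makes $A\cap B$ a non-trivial vertical subrectangle of $A$.
\item By Lemma \ref{l.npredecessor}, $B$ is then a predecessor of $A$ of some generation $m$. Since $B\neq A$, we have $m\geq 1$.
\item Concatenating the chains from $R$ to $A$ and from $A$ to $B$ shows, by the definition of generation, that $B$ is a predecessor of $R$ of generation $N+m\neq N$.
\item This contradicts the uniqueness of the generation in Lemma \ref{l.npredecessor}. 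That lemma applies because a predecessor of positive generation meets $R$ in a non-trivial vertical subrectangle.
\end{enumerate}
For $N=0$ there is nothing to prove. The paper does not write this out and defers to \cite{monstre}, but this is all the statement needs.

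Your own route can also be completed.
\begin{enumerate}
\item $B\cap A$ spans the full $\mathcal{F}^u$-extent of $A$.
\item $A\cap A'$ is a horizontal strip of $A$ that spans the full $\mathcal{F}^u$-extent of $A'$.
\item Hence $B\cap A\cap A'$ is a vertical strip of $A'$ with non-empty interior.
\item Since $B\neq A'$ (their generations differ), the non-triviality clause of the Markovian intersection axiom forbids $B\cap A'$ from being horizontal in $A'$.
\item So $B$ is a predecessor of $A'$ of some generation $m$ with $N+m=N+1$, i.e.\ $m=1$.
\item Uniqueness of chains then forces $B_N=A'$, contradicting $A'\neq B'$.
\end{enumerate}
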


Once again, Lemmas \ref{l.npredecessor} and \ref{l.npredecessorsdisjoint} were proven for strong Markovian families preserved by Anosov-like strong Markovian actions in \cite{monstre} (see Remark 4.4, Lemmas 4.5 and 4.6). However, the proofs given in \cite{monstre} apply in the exact same way in our setting. 
%\begin{proof}
%Take $R,R'\in \mathcal{R}$ such that $R'\cap R$ is a non-trivial vertical subrectangle of $R$. By the definition of a predecessor, if $R'$ is not one of the predecessors of $R$, then there exists $R_1$ a predecessor of $R$ containing $R'\cap R$. Notice that since the predecessors of $R$ have disjoint interiors, $R_1$ is the unique predecessor of $R$ containing $R'\cap R$. Furthermore, by the Markovian intersection axiom, $R'$ intersects $R_1$ along a non-trivial vertical subrectangle. Again, if $R'$ is not one of the predecessors of $R_1$, then $R'\cap R_1$ is contained in a unique predecessor of $R_1$, say $R_2$. We construct in this way a sequence $R_0=R,R_1,...,R_i,...$ such that for every $i$, the rectangle  $R_{i+1}$ is the predecessor of $R_i$ containing $R'\cap R_i$. If there exists no $i$ such that $R_i\cap R=R'\cap R$, the previous sequence is infinite and by the expansivity property, $\overset{+\infty}{\underset{k=0}{\cap}} R_{k}$
%is an $\mathcal{F}^u$-leaf of $R$ containing $R'\cap R$, which is impossible. We conclude that there exists $n$ such that $R_n\cap R=R'\cap R$. The Markovian intersection axiom implies in this case that $R_n=R'$ (we are utilizing here the fact that the intersection between two rectangles of $\mathcal{R}$ was assumed to be \emph{non-trivial}, see Definition \ref{d.subrectangleplane}), which gives us the desired result.
%\end{proof}

\begin{figure}[h]
    \centering
    \includegraphics[scale=0.45]{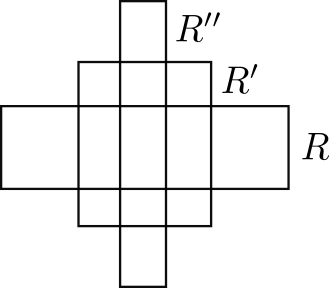}
    \caption{In the above figure, $R''$ is a predecessor of $R'$ of some generation and $R'$ is a predecessor of $R$ of some generation.}
    \label{f.predecessorsinchain}
\end{figure}

We finish this section by remarking that the relation ``being a predecessor of some generation" defines a partial order on $\mathcal{R}$: 

\begin{rema}\label{r.precedentsuivant}
Take $R, R', R'', R''' \in \mathcal{R}$ such that $R'$ is a predecessor of $n$-th generation of $R$, $R''$ is a predecessor of $k$-th generation of $R'$ and $R'''$ is a successor of $l$-th generation of $R'$ (see Figure \ref{f.predecessorsinchain}). Thanks to Lemmas \ref{l.npredecessor} and \ref{l.npredecessorsdisjoint}, we have that: 
\begin{enumerate}  
    \item $R''$ is a predecessor of $(n+k)$-generation of $R$
    \item If $\inte{R}\cap \inte{R'''}\neq \emptyset$ and $l<n$, then $R'''$ is a predecessor of $R$ of generation $n-l$
    \item If $\inte{R}\cap \inte{R'''}\neq \emptyset$ and $l=n$, then $R'''=R$ 
    \item If $\inte{R}\cap \inte{R'''}\neq \emptyset$ and $l>n$, then $R'''$ is a successor of $R$ of generation $l-n$
     \item For every $g\in G$ we have that $\rho(g)(R')$ is a predecessor of $n$-th generation of $\rho(g)(R)$
\end{enumerate}

\end{rema}

\section{On the properties of strong Markovian actions}
From this point on, we begin the proof of Theorem \ref{t.main}. In the following sections, we verify successively that $\rho$ satisfies the axioms of Definition \ref{d.anosovlike}. 

Fix $\mathcal{P}$ a plane endowed with a pair $(\mathcal{F}^s,\mathcal{F}^u)$ of transverse singular foliations  with no $1$-prong singularities. Let $G$ be a countable group with no torsion and let $\rho:G \rightarrow \text{Homeo}_+(\mathcal{P})$ be an orientation preserving strong Markovian action that preserves both the bifoliation  $(\mathcal{F}^s,\mathcal{F}^u)$ and a strong Markovian family $\mathcal{R}:=(R_i)_{i \in I}$. Denote by $e$ the trivial element in $G$.

\subsection{On the periodic points and leaves of the action $\rho$}

Our goal in this section consists in showing that $\rho$ satisfies Axioms (A2) and (A3) of Definition \ref{d.anosovlike}. Once these results have been proved, we will show that, for every $g\in G-\{e\}$, the set of fixed points of $\rho(g)$ in $\mathcal{P}$ is discrete. This well-known property of Anosov-like actions will be used repeatedly throughout the remainder of the paper.

\begin{defi}
A leaf $l$ of $\mathcal{F}^s$ or $\mathcal{F}^u$ is said to be \emph{periodic} if there exists $g \in G-\{e\}$ such that $\rho(g)(l) = l$. We define in the same way the notion of a \emph{periodic point} in $\mathcal{P}$.
\end{defi}
\begin{defi}\label{d.boundaryleaf}
   We will call a leaf $l$ in $\mathcal{F}^s$ (resp. $\mathcal{F}^u$) a \emph{boundary leaf for $\mathcal{R}$} if there exists  $R\in \mathcal{R}$ such that $l$ contains a connected component of $\partial ^sR$ (resp. $\partial ^uR$). 
\end{defi}

According to the following lemma, the action $\rho$ admits a plethora of periodic leaves in both $\mathcal{F}^s$ and $\mathcal{F}^u$. 
\begin{lemm} \label{l.boundaryleavesareperiodic}
Any boundary leaf for $\mathcal{R}$ in $\mathcal{F}^s$ or $\mathcal{F}^u$ is periodic for $\rho$. 
\end{lemm}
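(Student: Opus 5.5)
The plan is to follow the boundary leaf through an infinite chain of successors. Finiteness of $\mathcal{R}$ modulo $\rho$ then forces two rectangles of the chain to lie in the same orbit. The element of $G$ relating them will fix the leaf. I treat a boundary leaf $l\in\mathcal{F}^s$; the $\mathcal{F}^u$ case is symmetric, with predecessors in place of successors.

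\emph{Step 1: one successor keeps a boundary component on $l$.} Let $R\in\mathcal{R}$ and let $s\subset l$ be a connected component of $\partial^sR$.
\begin{itemize}
\item By the strong finite return time axiom, some $R_h\in\mathcal{R}$ meets $R$ along a non-trivial horizontal subrectangle. So $R$ has at least one successor.
\item By the horizontal analogue of Lemma \ref{l.existenceofpredecessors}, the successors of $R$ cover $R$. Pick a point $x\in s$ and a successor $R_1$ with $x\in R_1\cap R$.
\item $R_1\cap R$ is a horizontal subrectangle of $R$, so it spans the full $\mathcal{F}^u$-width of $R$. Since it contains the point $x$ of the $\mathcal{F}^s$-boundary component $s$, one of its $\mathcal{F}^s$-boundary components is $s$ itself.
\item By the Markovian intersection axiom, $R_1\cap R$ is a vertical subrectangle of $R_1$. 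Hence $\partial^s(R_1\cap R)\subset\partial^sR_1$.
\end{itemize}
Therefore $R_1$ has an $\mathcal{F}^s$-boundary component $s_1\supset s$ contained in $l$.

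\emph{Step 2: an infinite chain and pigeonhole.} Iterating Step 1 gives a sequence $R_0=R,R_1,R_2,\dots$. Each $R_{k+1}$ is a successor of $R_k$, and each $R_k$ has an $\mathcal{F}^s$-boundary component $s_k\subset l$ with $s\subset s_k$.

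Each rectangle has two $\mathcal{F}^s$-boundary components. Consider the pair consisting of the $\rho$-orbit of $R_k$ and the position of $s_k$ inside $R_k$ (transported by the group element identifying $R_k$ with a fixed orbit representative). This pair takes finitely many values, by the finiteness axiom. So there are $m<n$ and $g\in G$ with $R_n=\rho(g)(R_m)$ and $s_n=\rho(g)(s_m)$. Then $\rho(g)(l)=\rho(g)(\mathcal{F}^s(s_m))=\mathcal{F}^s(s_n)=l$.

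One caveat: if $R_m$ has a non-trivial stabiliser, the labelling of its boundary sides is only defined up to that stabiliser. This is harmless, because we only need some $g$ that maps $R_m$ to $R_n$ and $s_m$ to $s_n$.

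\emph{Step 3: $g\neq e$ (the main obstacle).} We must rule out $R_n=R_m$.
\begin{itemize}
\item All the $R_k$ contain a thin strip along $s\subset l$ on a common side. Indeed, each intersection $R_{k+1}\cap R_k$ is a horizontal strip of $R_k$ adjacent to $s_k$, and these strips are nested. So the interiors of $R_m$ and $R_n$ intersect.
\item By the successor analogue of Remark \ref{r.precedentsuivant}(1), $R_n$ is a successor of generation $n-m\geq1$ of $R_m$.
\item By Lemma \ref{l.npredecessor}, this generation is uniquely determined, and the only successor of generation $0$ of $R_m$ is $R_m$ itself. Hence $R_n\neq R_m$.
\end{itemize}
It follows that $\rho(g)\neq\mathrm{id}$, so $g\neq e$, and $l$ is periodic.

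The delicate points are all in this last step. One must check that the chain is a genuine chain of successors in the sense of Definition \ref{d.successor}, so that Lemma \ref{l.npredecessor} applies. One must also check that the strips really are nested on one side of $l$. If $l$ passes through a singularity, the strips are still taken along the separatrix segment $s$, which is an $\mathcal{F}^s$-segment, so the same argument applies.
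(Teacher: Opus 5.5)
Your proposal is correct and follows the paper's proof essentially step for step: a chain of successors each having an $\mathcal{F}^s$-boundary component containing $s$, the finiteness axiom applied to the pair (orbit, boundary side), and the resulting group element mapping $l$ to itself. Your Step 3 makes explicit the non-triviality $g\neq e$ that the paper only asserts. The cleanest justification is that the $\mathcal{F}^u$-segments through a fixed point $x\in s$ inside $R_k$ strictly decrease with $k$, because each $R_{k+1}\cap R_k$ is a non-trivial horizontal strip of $R_k$. The uniqueness clause of Lemma \ref{l.npredecessor} does not quite serve, since its hypothesis already excludes $R'=R$.
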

\begin{proof}
Fix $R\in \mathcal{R}$, $s$ a connected component of $\partial^s R$ and $L^s$ the leaf in $\mathcal{F}^s$ containing $s$. By repeatedly applying the strong finite return time axiom together with the Markovian intersection axiom (see Definition \ref{d.markovfamily}), there exists $R_0,R_1,R_2,...$ a sequence of rectangles in $\mathcal{R}$ such that $R_0=R$ and also such that $R_{i+1}\cap R_i$ is a horizontal subrectangle of $R_i$ containing $s$ for every $i\in\mathbb{N}$. 

Thanks to Item (5) of Proposition \ref{p.propertiesoffoliformarkovianactions} and to the Markovian intersection axiom, we have that $L^s\cap R_i$ is a connected component of $\partial ^s R_i$ for every $i\in \mathbb{N}$. Next, thanks to the finiteness axiom, after possibly passing to a subsequence, we can assume that $R_0,R_1,....$ belongs in a unique $\rho$-orbit of rectangles in $\mathcal{R}$; in particular, for every $i\in \mathbb{N}$ there exists $g_i\in G-\{e\}$ such that $\rho(g_i)(R_0)=R_i$. Passing to a further subsequence if necessary, we may assume that $\rho(g_i)$ sends the $\mathcal{F}^s$-boundary component of $R_0$ containing $s$ to the $\mathcal{F}^s$-boundary component of $R_i$ containing $s$. Since $\rho$ preserves $\mathcal{F}^s$, this implies that $\rho(g_i)(L^s)=L^s$ and, thus that the leaf $L^s$ is periodic for $\rho$. 

\end{proof}
The following lemma together with Lemma \ref{l.boundaryleavesareperiodic} prove that $\rho$ satisfies the Axiom (A2) of Definition \ref{d.anosovlike}: 
\begin{lemm}\label{l.boundaryleavesarefiniteanddense}
      Up to the action of $\rho$, there are finitely many boundary leaves for $\mathcal{R}$ inside $\mathcal{F}^s$. Moreover, the union of all boundary leaves for $\mathcal{R}$ inside $\mathcal{F}^s$ forms a dense set in $\mathcal{P}$. 
\end{lemm}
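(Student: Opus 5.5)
The lemma has two parts, finiteness up to $\rho$ and density, and I will treat them separately.

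For finiteness, I will use the finiteness axiom directly. Choose representatives $R_1,\dots,R_k$ of the $\rho$-orbits of rectangles in $\mathcal{R}$. Any boundary leaf $l\in\mathcal{F}^s$ contains a component of $\partial^sR$ for some $R\in\mathcal{R}$. Write $R=\rho(g)(R_j)$. Since $\rho$ preserves $\mathcal{F}^s$ and sends $\partial^s R_j$ to $\partial^s R$, we get $l=\rho(g)(l')$, where $l'$ is one of the two $\mathcal{F}^s$-leaves through the components of $\partial^sR_j$. So up to $\rho$ there are at most $2k$ boundary leaves in $\mathcal{F}^s$.

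For density, I will argue by contradiction. Suppose some nonempty open set $V$ meets no boundary leaf of $\mathcal{F}^s$. Shrinking $V$, take a point $x\in V$ and a small germ $U\subset V$ of $x$ in one of its quadrants. By the finite return time axiom, $U\subset R_0$ for some $R_0\in\mathcal{R}$, and we may take a small $\mathcal{F}^u$-segment $I\subset U$ through interior points of $R_0$. Since $R_0$ is a rectangle and $I$ is transverse to $\mathcal{F}^s$, it suffices to find a boundary leaf of $\mathcal{F}^s$ crossing $I$.

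To find one, I will build a sequence of successors. By the strong finite return time axiom, applied repeatedly together with Lemma \ref{l.existenceofpredecessors} (horizontal version), for each point of $I$ we can pass from $R_0$ to the successor containing it. Concretely, let $R_0$ be as above and let $R_{n+1}$ be a successor of $R_n$ whose intersection with $R_n$ contains some chosen point $y\in I$. The successors of $R_n$ cover $R_n$ by horizontal subrectangles with disjoint interiors, so the $\mathcal{F}^s$-boundaries of these horizontal strips $R_{n+1}\cap R_n$ are pieces of $\partial^s R_{n+1}$, which lie on boundary leaves. If $I$ never crosses any such boundary, then $I\subset R_n\cap R_{n+1}$ for every $n$, and hence $I\subset\bigcap_{n} R_n$. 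By the expansivity axiom, this intersection is a single $\mathcal{F}^s$-leaf of a point of $R_0$. But $I$ is a nontrivial $\mathcal{F}^u$-segment, so it cannot lie inside one $\mathcal{F}^s$-leaf, since by Proposition \ref{p.propertiesoffoliformarkovianactions}(4) it meets each such leaf at most once. This is a contradiction. Therefore some $n$ has a successor strip boundary inside the interior of $I$, so a boundary leaf meets $I\subset V$. The same argument with predecessors gives the analogous statement for $\mathcal{F}^u$.

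The main obstacle is making sure the sequence $R_n$ is genuinely a sequence of non-trivial horizontal subrectangle steps, so that expansivity applies. It is also what justifies the claim that, when $I$ is not contained in $R_{n+1}\cap R_n$, a component of $\partial^sR_{n+1}$ really crosses $I$ in its interior rather than $I$ merely ending there. I would handle this by choosing $y$ in the interior of $I$ and of $R_0$, and by using that $R_{n+1}\cap R_n$ is a horizontal subrectangle of $R_n$ whose $\mathcal{F}^s$-boundary lies in $\partial^s R_{n+1}$. Any $\mathcal{F}^u$-segment in $R_n$ through $y$ that exits this strip must then cross its $\mathcal{F}^s$-boundary. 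Proposition \ref{p.propertiesoffoliformarkovianactions}(5) ensures the leaves involved meet $R_n$ in single segments, so the crossing point lies on a boundary leaf.
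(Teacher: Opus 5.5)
Your proposal is correct and follows essentially the paper's argument. Finiteness comes directly from the finiteness axiom. For density, you build a nested chain of successors around a point; by the expansivity axiom its intersection is a single $\mathcal{F}^s$-leaf of $R_0$, so the $\mathcal{F}^s$-boundaries of the $R_n$ come arbitrarily close. Your version by contradiction, using the transverse $\mathcal{F}^u$-segment $I$, simply spells out why these boundary components actually meet any small open set, a step the paper states only briefly.
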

Naturally, the previous lemma is also true for boundary leaves inside $\mathcal{F}^u$. 
\begin{proof}

The fact that the set of $\mathcal{F}^s$-boundary leaves for  $\mathcal{R}$ is finite up to the action of $\rho$ is an immediate consequence of the finiteness axiom (see Definition \ref{d.markovfamily}). Let us now prove that the union of all the $\mathcal{F}^s$-boundary leaves for $\mathcal{R}$ forms a dense set in $\mathcal{P}$.

Take $x\in \mathcal{P}$. By a repeated application of the strong finite return time axiom, there exists $R_0,R_1,R_2,...$ a sequence of rectangles in $\mathcal{R}$ such that $R_{i+1}\cap R_i$ is a horizontal subrectangle of $R_i$ for every $i\in \mathbb{N}$ and also such that $x\in \underset{i\in \mathbb{N}}{\cap}R_i$. By the expansiveness axiom,  $\underset{i\in \mathbb{N}}{\cap}R_i$ is an $\mathcal{F}^s$-leaf of $R_0$ containing $x$. Therefore, as $i\rightarrow +\infty$, the $\mathcal{F}^s$-boundary of $R_i$ gets arbitrarily close to $x$, which yields the desired result. 

\end{proof}

Notice that, since no rectangle in $\mathcal{R}$ can contain in its interior singular points of $\mathcal{F}^{s,u}$, we have that any singularity of $\mathcal{F}^{s,u}$ lies necessarily on the boundary of some rectangle in $\mathcal{R}$. Therefore its $\mathcal{F}^s$ or $\mathcal{F}^u$ leaf is periodic. Even more,  

\begin{lemm}\label{l.singularareperiodic} 
    If $p$ is a singular point of $\mathcal{F}^{s,u}$, then $p$ is periodic.
    \end{lemm}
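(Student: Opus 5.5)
We adapt the proof of Lemma \ref{l.boundaryleavesareperiodic}, working with germs at $p$ instead of a boundary segment. Let $p$ be a $k$-prong singularity, with $k\geq 3$. Fix one quadrant $Q$ of $p$ and a small germ $U$ of $p$ in $Q$.

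\textbf{Step 1: a chain of rectangles with corner $p$.} By the strong finite return time axiom there is $R_0\in\mathcal{R}$ with $U\subset R_0$. Since no rectangle contains a singular point in its interior, $p\in\partial R_0$. Moreover, the germ of $p$ inside $R_0$ is a full quadrant germ. Item (5) of Proposition \ref{p.propertiesoffoliformarkovianactions} then forces $p$ to be a corner of $R_0$: a rectangle cannot contain a singular point on the interior of a boundary side, because the prongs would leave the rectangle.

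Applying the strong finite return time axiom repeatedly to the same germ gives $R_h$ meeting $R_0$ along a non-trivial horizontal subrectangle that contains $U$. The Markovian intersection axiom lets us iterate. We obtain a sequence $R_0,R_1,\dots$ with the following properties: $R_{i+1}\cap R_i$ is a non-trivial horizontal subrectangle of $R_i$, each $R_i$ contains a germ of $p$ in $Q$, and $p$ is a corner of every $R_i$.

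These rectangles are pairwise distinct, for two reasons. By Lemma \ref{l.npredecessor}, $R_{j}$ is a successor of generation $\geq 1$ of $R_i$ for $j>i$. By Remark \ref{r.precedentsuivant}, this generation relation is a strict order.

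\textbf{Step 2: pigeonhole.} By the finiteness axiom, after passing to a subsequence all $R_i$ lie in one $\rho$-orbit. Write $R_i=\rho(g_i)(R_0)$. A rectangle has four corners, so after passing to a further subsequence we may assume that $\rho(g_i)$ sends the corner $p$ of $R_0$ to one and the same corner of $R_i$. We also arrange the combinatorial type to be fixed, meaning which corner of $R_0$ it is.

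Take $i<j$ and set $g=g_j g_i^{-1}$. Then $\rho(g)(R_i)=R_j$ and $\rho(g)$ sends the corner $p$ of $R_i$ to the corner $p$ of $R_j$. Hence $\rho(g)(p)=p$. Since $R_i\neq R_j$, we have $g\neq e$, so $p$ is periodic.

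\textbf{Main obstacle.} The delicate point is Step 1. We must justify that $p$ is a corner of each $R_i$, and that the $\rho(g_i)$ can be chosen to match the corner $p$ consistently.

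For the first, we argue via local charts as above: near $p$, the only way a rectangle can contain an open quadrant germ without $p$ in its interior is with $p$ at the corner.

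For the second, the pigeonhole over the four corners suffices. An alternative is to observe that, among the corners of $R_i$, only $p$ is singular, up to finitely many options; either way the choice reduces to finitely many cases.
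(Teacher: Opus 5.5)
The step you flag as delicate is false: a rectangle \emph{can} contain a singular point in the interior of one of its sides, so ``$p$ is a corner of every $R_i$'' is not available.

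Let $p$ be a $k$-prong singularity. The closed sector bounded by two \emph{consecutive} $\mathcal{F}^s$-prongs of $p$ contains exactly one $\mathcal{F}^u$-prong and two quadrants. This sector is trivially bifoliated: via $\pi_k$ and $\pi_2$ it is identified with the closed upper half of $\mathcal{D}_2$, which is bounded by a horizontal segment. Take a small rectangle in this sector whose lower side is the union of those two prongs. It is a rectangle in the sense of Definition~\ref{d.standardpolygon}, with $p$ in the interior of an $\mathcal{F}^s$-side. The remaining prongs simply lie outside it, and Item~(5) of Proposition~\ref{p.propertiesoffoliformarkovianactions} does not forbid this. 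Nothing in the axioms of a strong Markovian family rules it out either. The paper explicitly allows for it: the rectangles ``containing $p$ and a neighborhood of $p$ in $D$'' in Lemma~\ref{l.longrectanglessingularleaves} are of exactly this kind. So your pigeonhole over four corners is unjustified, and your closing alternative is still phrased in terms of corners.

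The repair is easy and makes the corner analysis unnecessary. All you need is infinitely many distinct rectangles in a single $\rho$-orbit containing $p$, which your chain supplies. The chain is most cleanly built from Lemma~\ref{l.existenceofpredecessors}: the successors of $R_i$ cover $R_i$ by horizontal strips, and one of them contains a germ of $p$ in $Q$. Write $R_i=\rho(g_i)(R_*)$ for a fixed $R_*$ in that orbit. Each $\rho(g_i)^{-1}(p)$ is a singular point of $R_*$, and none lies in $\inte{R_*}$. Each side of $R_*$ lies in one leaf, which carries at most one singular point, so there are at most four candidates. Pigeonholing gives $i\neq j$ with $\rho(g_jg_i^{-1})(p)=p$. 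Moreover $g_jg_i^{-1}\neq e$, since it maps $R_i$ to $R_j\neq R_i$.

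With this fix your proof is correct, but it is longer than the paper's. The paper takes any $R\in\mathcal{R}$ containing $p$ and notes that $p\in\partial^sR$ (say). Then $\mathcal{F}^s(p)$ is a boundary leaf, hence periodic by Lemma~\ref{l.boundaryleavesareperiodic}. Any element stabilising the singular leaf $\mathcal{F}^s(p)$ must fix its unique singular point, which finishes the proof. Your chain-and-pigeonhole essentially re-runs the proof of Lemma~\ref{l.boundaryleavesareperiodic} at the level of the point. The fact that rescues it (a leaf has at most one singular point) is exactly what the paper uses to pass from the periodic leaf to the periodic point, without ever needing to know where $p$ sits on $\partial R$.
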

\begin{proof}
Take $R\in \mathcal{R}$ containing $p$ and assume without any loss of generality that $p\in \partial^s R$. Thanks to Lemma \ref{l.boundaryleavesareperiodic}, this implies that $\mathcal{F}^s(p)$ is periodic. Any element of $G$, whose action preserves $\mathcal{F}^s(p)$ must fix $p$, the unique singular point inside $\mathcal{F}^s(p)$. This proves that $p$ is periodic.

\end{proof}
The previous lemma shows that $\rho$ satisfies Axiom (A3) of Definition \ref{d.anosovlike}.

Next, thanks to Lemma \ref{l.boundaryleavesareperiodic}, any strong Markovian action in $(\mathcal{P},\mathcal{F}^s,\mathcal{F}^u)$ admits infinitely many periodic leaves in both $\mathcal{F}^s$ and $\mathcal{F}^u$. In addition to the previous fact, we will later establish that strong Markovian actions admit infinitely many periodic points in $\mathcal{P}$. However, despite the plethora of periodic points for $\rho$, the set of fixed points of a non-trivial element in $G$ must be discrete in $\mathcal{P}$. More precisely,

\begin{lemm}\label{l.preserverectangleimpliestorsion}
Let $g\in G$ and $R\in \mathcal{R}$. If $\rho(g)(R)=R$, then $g=e$.
\end{lemm}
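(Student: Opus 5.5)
Suppose $\rho(g)(R)=R$. The plan is to show that $\rho(g)$ fixes every point of an open set, and then to conclude $g=e$ using torsion-freeness and faithfulness.

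\textbf{Step 1: reduce to $\rho(g)$ fixing all predecessors and successors of $R$.}
\begin{itemize}
\item Since $\rho(g)$ preserves $R$ and the bifoliation, it permutes the finitely many boundary components of $R$. Hence some power $\rho(g^k)$, with $k\geq 1$ (say $k$ divides $4!$), preserves each $\mathcal{F}^s$- and each $\mathcal{F}^u$-boundary component of $R$. Because $\rho$ is orientation preserving, $\rho(g^k)$ then fixes the four corners of $R$.
\item By Remark~\ref{r.precedentsuivant}(5), $\rho(g^k)$ permutes the predecessors of $R$ of each generation $N$.
\item By Lemma~\ref{l.npredecessorsdisjoint}, the generation-$N$ predecessors of $R$ have disjoint interiors. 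By Lemma~\ref{l.existenceofpredecessors}, applied iteratively together with Lemma~\ref{l.npredecessor}, they meet $R$ in vertical subrectangles covering $R$. These vertical subrectangles are linearly ordered along the $\mathcal{F}^s$-direction of $R$, and $\rho(g^k)$ preserves this order, since it fixes the $\mathcal{F}^u$-boundary components of $R$ and preserves $\mathcal{F}^u$.
\item An order-preserving permutation of a finite ordered set is the identity. So $\rho(g^k)$ fixes every predecessor of every generation, and symmetrically every successor.
\end{itemize}

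\textbf{Step 2: show $\rho(g^k)$ fixes every point of $R$.}
\begin{itemize}
\item Take $x\in R$. Choose a decreasing chain $R=R_0,R_1,\dots$ of predecessors containing $x$ (a sequence of predecessors, each containing $x$).
\item By the expansivity axiom (4), $\bigcap_i R_i$ is the $\mathcal{F}^u$-leaf of $x$ in $R$. This leaf is preserved by $\rho(g^k)$, because each $R_i$ is fixed.
\item Doing the same with successors shows that the $\mathcal{F}^s$-leaf of $x$ in $R$ is preserved.
\item By Item (4) of Proposition~\ref{p.propertiesoffoliformarkovianactions}, these two leaves meet in the single point $x$, so $\rho(g^k)(x)=x$.
\end{itemize}
Hence $\rho(g^k)$ is the identity on $R$, which has nonempty interior.

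\textbf{Step 3: extend from $R$ to all of $\mathcal{P}$.}
\begin{itemize}
\item Any rectangle $R'$ whose interior meets $\inte{R}$ in a Markovian way is either a predecessor or successor of some generation of $R$ (Lemma~\ref{l.npredecessor}). By Step 1, such an $R'$ is fixed by $\rho(g^k)$.
\item Then Step 2 applied to $R'$ shows that $\rho(g^k)$ is the identity on $R'$ as well.
\item The set $\Omega$ of points having a neighbourhood on which $\rho(g^k)=\mathrm{id}$ is open and nonempty.
\item \emph{Main obstacle:} showing $\Omega$ is closed. Take $x\in\overline{\Omega}$. By the strong finite return time axiom, each germ of $x$ in each quadrant lies in some $R'\in\mathcal{R}$ with nonempty interior meeting $\Omega$.
\item Any $R'$ whose interior meets $\Omega$ is preserved by $\rho(g^k)$: since $\rho(g^k)=\mathrm{id}$ near an interior point $y$ of $R'$, the rectangles $\rho(g^k)(R')$ and $R'$ both contain a neighbourhood of $y$ and share the same boundary germs there. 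I would try to derive this rigorously through the Markovian intersection axiom, which forbids a second distinct rectangle meeting $R'$ as a non-trivial vertical and horizontal subrectangle simultaneously in a region where they coincide. Failing that, I would compare predecessor chains of the two rectangles through a small common subrectangle.
\item Granting this, Step 2 gives $\rho(g^k)=\mathrm{id}$ on all quadrant germs of $x$, so $x\in\Omega$.
\item By connectedness of $\mathcal{P}$, $\Omega=\mathcal{P}$.
\end{itemize}

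\textbf{Conclusion.} Faithfulness gives $g^k=e$, and since $G$ is torsion-free, $g=e$.
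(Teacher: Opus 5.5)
\textbf{Verdict: there is a genuine gap in Step 3.}

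Your Steps 1 and 2 are correct and are essentially the first half of the paper's proof. The paper phrases Step 1 via orientations of $\mathcal{F}^{s,u}$ on $R$, and Step 2 via a bi-infinite predecessor/successor chain through a quadrant germ, but the argument is the same. Your final open--closed argument and the use of torsion-freeness also match the paper.

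The gap sits in Step 3, exactly where you flag it, and it is a missing idea rather than a missing detail. There are two problems.

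First, your proposed justification that every $R'$ whose interior meets $\Omega$ is preserved does not work as stated. Near an interior point $y$ of $R'$ there are no boundary germs to compare. An open set $W\ni y$ contained in $R'\cap\rho(g^k)(R')$ is perfectly compatible with $\rho(g^k)(R')$ being a predecessor of $R'$, so a single application of the Markovian intersection axiom gives nothing. The claim is true, but it needs iteration. If $\rho(g^k)(R')\neq R'$, then after possibly replacing $g$ by $g^{-1}$, the rectangles $\rho(g^{kn})(R')$ are predecessors of $R'$ of generation at least $n$, all containing $W$. This contradicts the expansivity axiom; it is the mechanism of the proof of Lemma~\ref{l.twofixedpointsinthesamerectangle}.

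Second, and more seriously, $x\in\clos{\Omega}$ only tells you that $\Omega$ meets the germ of $x$ in \emph{some} quadrant, not in each one. So your bullet asserting that every quadrant germ of $x$ lies in a rectangle whose interior meets $\Omega$ is unjustified. Interior overlaps alone cannot repair this. Suppose the separatrix between two adjacent quadrants of $x$ lies on a boundary leaf along which the rectangles on the two sides only abut, as at a periodic corner point of a lifted Markov partition. Then no rectangle containing the germ in one quadrant has interior meeting the other.

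\textbf{The missing idea} is the paper's boundary-crossing argument, which uses the \emph{non-triviality} in the Markovian intersection axiom.
\begin{itemize}
\item Let $S$ be an $\mathcal{F}^s$-boundary component of $R$; it is fixed pointwise by Step 2. Take $y\in S$ and the chain of predecessors $R_n$ containing the germ of $y$ in a quadrant lying across $S$ from $R$.
\item By expansivity the $R_n$ become arbitrarily thin in the $\mathcal{F}^s$-direction. So for $n$ large one of two things holds. Either $R_n$ meets $R$ in a vertical subrectangle, in which case it is a predecessor of some generation of $R$ and is preserved by Step 1. Or $R_n$ has interior disjoint from $R$ and an $\mathcal{F}^s$-boundary component contained in $S$.
\item In the latter case, $\rho(g^k)(R_n)$ shares this boundary component with $R_n$, because $S$ is fixed pointwise. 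It also lies on the same side of it, because its interior is disjoint from $\rho(g^k)(R)=R$.
\item If these two rectangles were distinct, their intersection would contain a full $\mathcal{F}^s$-side of both. Such a set cannot be a non-trivial vertical subrectangle of either, contradicting the axiom. Hence $\rho(g^k)(R_n)=R_n$.
\end{itemize}
Doing this quadrant by quadrant, using successors for $\mathcal{F}^u$-boundaries, shows that every rectangle meeting a preserved rectangle is preserved. With this propagation rule, the union of preserved rectangles is open and closed, and the rest of your argument goes through.
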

\begin{lemm}\label{l.twofixedpointsinthesamerectangle}
    For any $g\in G-\{e\}$ and any $R\in \mathcal{R}$ the homeomorphism $\rho(g)$ can fix at most one point inside $R$.
\end{lemm}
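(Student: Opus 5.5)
We argue by contradiction. Suppose that $g\in G-\{e\}$ fixes two distinct points $x,y\in R$. The idea is to produce a rectangle of $\mathcal{R}$ that is preserved by some non-trivial power of $g$. Lemma \ref{l.preserverectangleimpliestorsion} then forces that power to be $e$, which contradicts the fact that $G$ has no torsion.

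\emph{Step 1: a $\rho(g)$-invariant set attached to $x,y$.} There are two cases.
\begin{enumerate}
\item If $x$ and $y$ do not lie on a common leaf, let $Q\subset R$ be the subrectangle with opposite corners $x$ and $y$. Its other two corners are $\mathcal{F}^s(x)\cap\mathcal{F}^u(y)$ and $\mathcal{F}^u(x)\cap\mathcal{F}^s(y)$. By Item (4) of Proposition \ref{p.propertiesoffoliformarkovianactions} these are single points, so they are fixed by $\rho(g)$. Since $\rho(g)$ preserves both foliations, it maps the four boundary segments of $Q$ to themselves, hence $\rho(g)(Q)=Q$.
\item If $x$ and $y$ lie on a common leaf, say of $\mathcal{F}^s$, then by Item (5) of Proposition \ref{p.propertiesoffoliformarkovianactions} the $\mathcal{F}^s$-segment $I=[x,y]\subset R$ is well defined. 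It satisfies $\rho(g)(I)=I$.
\end{enumerate}
In both cases let $K$ denote this invariant set ($Q$ or $I$). The set $\mathcal{R}_K=\{R'\in\mathcal{R} : K\subset R'\}$ is non-empty, since it contains $R$, and it is permuted by $\rho(g)$.

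\emph{Step 2: maximal elements of $\mathcal{R}_K$.} Call $R'\in\mathcal{R}_K$ \emph{maximal} if no predecessor of $R'$ contains $K$. Part (5) of Remark \ref{r.precedentsuivant} says $\rho(g)$ preserves the predecessor relation, so $\rho(g)$ permutes the maximal elements. Every $R'\in\mathcal{R}_K$ lies below a maximal element. Indeed, suppose not, and take an infinite chain $R'=R_0,R_1,\dots$ in which $R_{i+1}$ is a predecessor of $R_i$ containing $K$. Then $K\subset\bigcap_i R_i$. By the expansivity axiom this intersection is an $\mathcal{F}^u$-leaf of $R_0$. But $K$ contains two points that are not on a common $\mathcal{F}^u$-leaf in either case; in the second case $K$ is a non-degenerate $\mathcal{F}^s$-segment. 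This is a contradiction. Here it is important to work only with predecessors: in the second case, successor chains could shrink onto an $\mathcal{F}^s$-leaf containing $I$, which gives no contradiction.

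\emph{Step 3: finiteness of the maximal elements.} Let $R',R''$ be two distinct maximal elements whose interiors intersect. By the Markovian intersection axiom and Lemma \ref{l.npredecessor}, one is a predecessor of some generation $n\geq1$ of the other, say $R''$ of $R'$. Let $R'=R_0,R_1,\dots,R_n=R''$ be the chain given by Lemma \ref{l.npredecessor}. I expect $R''\cap R'\subset R_1$ to follow from the construction in that lemma. Then $K\subset R'\cap R''\subset R_1$, so $R_1$ is a predecessor of $R'$ containing $K$, contradicting maximality. Hence distinct maximal elements have disjoint interiors.
\begin{enumerate}
\item In the first case, every element of $\mathcal{R}_K$ contains the open set $\inte{Q}$. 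So there is exactly one maximal element, and it is $\rho(g)$-invariant.
\item In the second case, every rectangle containing $I$ contains a small half-neighbourhood of the midpoint of $I$, on one of the two sides of the leaf $\mathcal{F}^s(x)$. Rectangles with disjoint interiors cannot share a side, so there are at most two maximal elements, and $\rho(g^2)$ fixes each of them.
\end{enumerate}
In either case Lemma \ref{l.preserverectangleimpliestorsion} gives $g=e$ or $g^2=e$. Since $G$ has no torsion, $g=e$, a contradiction.

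The main obstacle I expect is Step 3: proving rigorously that two maximal rectangles with overlapping interiors give a predecessor of one of them containing $K$. This needs the chain from Lemma \ref{l.npredecessor} to satisfy $R''\cap R'\subset R_1$. I would check this against the proof of Lemma \ref{l.npredecessor}, which builds the chain by successively taking the predecessor containing $R''\cap R_i$. The degenerate case where $x$ and $y$ lie on a common leaf also needs care at the boundary of $R$.
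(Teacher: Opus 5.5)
Your argument is correct, but it is organized differently from the paper's.

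\textbf{The paper's route.} It works dynamically with the given rectangle $R$. After replacing $g$ by a power preserving every separatrix of $x$ and $y$, the rectangles $R$ and $\rho(g)(R)$ contain germs of $x$ in a common quadrant, so their interiors meet. By Lemma~\ref{l.preserverectangleimpliestorsion} they are distinct, so after possibly inverting $g$, $\rho(g)(R)$ is a predecessor of $R$ of non-zero generation. Then $(\rho(g^k)(R))_{k\in\mathbb{N}}$ is an infinite predecessor chain whose members all contain the fixed points $x$ and $z\in\mathcal{F}^s(x)\cap\mathcal{F}^u(y)$. These lie on different $\mathcal{F}^u$-leaves, contradicting expansivity.

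\textbf{Your route.} You use expansivity only to show that predecessor chains inside $\mathcal{R}_K$ terminate, so maximal elements exist. You then use the Markovian intersection axiom to bound their number, so that $g$ or $g^2$ stabilizes a rectangle and Lemma~\ref{l.preserverectangleimpliestorsion} concludes.

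\textbf{Comparison.} The paper's route is shorter and needs no case split. Yours is more combinatorial, never needs $R$ and $\rho(g)(R)$ to overlap, and attaches to $K$ a canonical finite set of rectangles permuted by $\rho(g)$. The case split is in fact unnecessary in your approach. Two distinct maximal elements cannot contain germs of a point of $K$ in a common quadrant, since their interiors would then meet. So there are always at most as many maximal elements as quadrants of that point, and you could simply take $K=\{x,y\}$ (arranging $\mathcal{F}^u(x)\neq\mathcal{F}^u(y)$ by symmetry).

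\textbf{The step you flagged.} It is immediate from Item~(1) of Lemma~\ref{l.existenceofpredecessors}. If $R''\cap R'$ is a non-trivial vertical subrectangle of $R'$, then $R''\cap R'\subseteq P\cap R'$ for some predecessor $P$ of $R'$. Hence $K\subset P$, contradicting the maximality of $R'$; no chain from Lemma~\ref{l.npredecessor} is needed.

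\textbf{A small repair.} In the second case, use a regular interior point of $I$ rather than its midpoint. The segment $I$ may lie in $\partial^sR$ and pass through a singular point, where the leaf does not locally have just two sides.
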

\begin{proof}[Proof of Lemma \ref{l.preserverectangleimpliestorsion}]
Take $g\in G-\{e\}$, $R\in \mathcal{R}$ and assume that  $\rho(g)(R)=R$. Endow both the restrictions of $\mathcal{F}^s$ and of $\mathcal{F}^u$ on $R$ with arbitrary orientations. Since $\rho$ is orientation preserving and $G$ has no torsion, after possibly replacing $g$ by $g^2\in G-\{e\}$, we can assume without any loss of generality that $\rho(g)$ preserves both of our previous orientations. 

Using our previous choices of orientations together with the fact that any two distinct predecessors (resp. successors) of $R$ have disjoint interiors, we get that the predecessors (resp. successors) of $R$ can be ordered from left to right (resp. bottom to top). Thanks to the previous fact and also to Remark \ref{r.precedentsuivant}, since $\rho(g)$ preserves $R$, it also preserves every successor and every predecessor of $R$. Similarly, an induction argument shows that, for every $n\in \mathbb{N}^*$, the homeomorphism $\rho(g)$ preserves every successor and every predecessor of $R$ of generation $n$.

Let $x\in R$ and $Q$ be a quadrant of $x$ such that a germ of $x$ in $Q$ is contained in $R$ (see Figure \ref{f.prooffixedpoints}). By a repeated application of Lemma \ref{l.existenceofpredecessors}, there exists a bi-infinite sequence of rectangles in $\mathcal{R}$ $$...,R_{-n}, \dots, R_{-1}, R_0 = R, R_1, \dots R_n,...$$
such that the rectangle $R_i$ contains a germ of $x$ inside $Q$ and also such that $R_{i+1}$ is a predecessor of $R_i$ for every $i\in \mathbb{Z}$. By definition, we have that, for every $i\in \mathbb{N}$, the rectangle $R_i$ (resp. $R_{-i}$) is a predecessor (resp. successor) of $R$ of generation $i$; hence $\rho(g)(R_i)=R_i$. Moreover, by the expansiveness axiom, we have that $\bigcap_{n \in \mathbb{Z}} R_n = \{ x \}$. This implies that $\rho(g)(x) = x$ and thus $\rho(g)$ fixes every point in $R$. By the same argument, we also get that if $R'$ is a predecessor or successor of $R$ of some generation, then $\rho(g)$ also fixes every point in $R'$.

\begin{figure}
    \centering
    \includegraphics[scale=0.7]{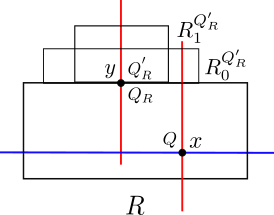}
    \caption{$\rho(g)$ fixes a neighborhood of $R$.}
    \label{f.prooffixedpoints}
\end{figure}
We would now like to prove that $\rho(g)$ fixes an open neighborhood of $R$. Indeed, let $S$ be an $\mathcal{F}^s$-boundary component of $R$, $y\in S$, $Q_R$ be a quadrant of $y$ such that $R$ contains a germ of $y$ in $Q_R$ and $Q_R'$ be a quadrant of $y$ intersecting $Q_R$ at its boundary and such that no germ of $y$ in $Q_R'$ is contained in $R$ (see Figure \ref{f.prooffixedpoints}). By a repeated application of Lemma \ref{l.existenceofpredecessors}, there exists a bi-infinite sequence of rectangles in $\mathcal{R}$
\[
...,R_{-n}^{Q_R'},....,R_{-1}^{Q_R'}, R_0^{Q_R'}, R_1^{Q_R'}, \dots, R_n^{Q_R'}, \dots
\]
such that $R_i^{Q_R'}$ contains a germ of $y$ in $Q_R'$ and also such that $R_{i+1}^{Q_R'}$ is a predecessor of $ R_{i}^{Q_R'}$ for every $i\in \mathbb{Z}$. By the expansivity axiom, for $n$ sufficiently big, either 
\begin{itemize}
   \item  $R_{n}^{Q_R'}$ intersects $R$ along a vertical subrectangle
   \item or $R_{n}^{Q_R'}$ and $R$ have disjoint interiors and an $\mathcal{F}^s$-boundary component $R_{n}^{Q_R'}$ is contained in $S$
\end{itemize}  In both of the above cases, let us show that $\rho(g)$ fixes every point in $R_{n}^{Q_R'}$. In the first case, thanks to Lemma \ref{l.npredecessor}, $R_{n}^{Q_R'}$ is a predecessor of some generation of $R$. Therefore, by our previous arguments, $\rho(g)$ fixes every point in $R_n^{Q_R'}$. In the second case, since $\rho(g)$ fixes $S$ we get that $\rho(g)(R_n^{Q_R'})$ shares an $\mathcal{F}^s$-boundary component with $R_n^{Q_R'}$. Even more, since the rectangles $\rho(g)(R_n^{Q_R'})\in \mathcal{R}$ and $\rho(g)(R)=R$ have disjoint interiors, we get that $\text{Int}(\rho(g)(R_n^{Q_R'}))\cap \text{Int}(R_n^{Q_R'})\neq \emptyset$. By the Markovian intersection axiom, this implies that $\rho(g)(R_n^{Q_R'})=R_n^{Q_R'}$ -we are utilizing here the fact that the intersection between two distinct rectangles of $\mathcal{R}$ was assumed to be \emph{non-trivial}, see Definition \ref{d.subrectangleplane}-. The arguments of this paragraph show that for $n$ sufficiently big $\rho(g)(R_n^{Q_R'})=R_n^{Q_R'}$. We deduce from the first part of our proof that $\rho(g)$ fixes every point of $R_i^{Q_R'}$ for every $i\in \mathbb{Z}$. 

By repeating the previous argument for the rest of the quadrants of $y$, we deduce that any rectangle in $\mathcal{R}$ intersecting $y$ is fixed by $\rho(g)$. Hence, any rectangle in $\mathcal{R}$ intersecting $R$ is fixed by $\rho(g)$. We conclude that there exists a neighborhood of $R$ that is fixed by $\rho(g)$. This implies that the set of fixed points of $\rho(g)$ is open and closed in $\mathcal{P}$, which proves that $\rho(g)=\text{id}_{
\mathcal{P}}$. This contradicts the faithfulness of $\rho$, as $g\neq e$, and finishes the proof of the lemma.
\end{proof}

\begin{proof}[Proof of Lemma \ref{l.twofixedpointsinthesamerectangle}]
    Suppose that there exist $g\in G-\{e\}$ and $R\in \mathcal{R}$ such that $\rho(g)$ fixes two distinct points $x,y\in R$. Since $G$ has no torsion, after possibly replacing $g$ by $g^n\in G-\{e\}$ for some $n\in\mathbb{N}$, we can assume that $\rho(g)$ preserves each $\mathcal{F}^s$-separatrix and each $\mathcal{F}^u$-separatrix of $x$ and $y$. As a result of the previous assumption, we obtain the following two facts: 
    
    \begin{itemize}
        \item $\rho(g)(\inte{R})\cap \inte{R}\neq \emptyset$. Hence, by Lemmas \ref{l.npredecessor} and \ref{l.preserverectangleimpliestorsion}, we have that $\rho(g)(R)$ is either a predecessor or a successor of $R$ of some non-zero generation

        \vspace{0.1cm}
        \item since $R$ is trivially bifoliated by $(\mathcal{F}^s,\mathcal{F}^u)$, we have that there exists $z\in \mathcal{F}^s(x)\cap \mathcal{F}^u(y)\cap R$ such that $\rho(g)(z)=z$
    \end{itemize} 
    
    We remark that the point $z$ could coincide with either $x$ or $y$ when $\mathcal{F}^u(x)=\mathcal{F}^u(y)$ or $\mathcal{F}^s(x)=\mathcal{F}^s(y)$ respectively. Assume without any loss of generality that $\mathcal{F}^u(x)\neq\mathcal{F}^u(y)$ (this is possible, thanks to Item (5) of Proposition \ref{p.propertiesoffoliformarkovianactions}) and thus that $z\neq x$. In that case, denote by $[x,z]^s$ the $\mathcal{F}^s$-segment in $R$ going from $x$ to $z$. After possibly replacing $g$ by its inverse $g^{-1}\in G-\{e\}$, we get that $\rho(g)$ fixes the points $x$ and $z$, preserves $[x,z]^s$ and that $\rho(g)(R)$ is a predecessor of $R$ of some non-zero generation. 
    
    Finally, by Remark \ref{r.precedentsuivant}, we have that for every $k\in\mathbb{N}$ the rectangle  $\rho(g^{k+1})(R)$ is a predecessor of some non-zero generation of $\rho(g^{k})(R)$ and thus also a predecessor of some generation of $R$. Using the fact that $\rho(g)([x,z]^s)=[x,z]^s$ together with the expansivity axiom, we have that $\underset{k\in \mathbb{N}}{\cap}\rho(g^{k})(R)$ is an $\mathcal{F}^u$-leaf in $R$ containing the $\mathcal{F}^s$-segment $[x,z]^s$, which is absurd. 
\end{proof}
\subsection{On the $\rho$-stabilizers of leaves in $(\mathcal{P},\mathcal{F}^s,\mathcal{F}^u)$}
Following our previous notations, our goal in this section is to show that the action $\rho$ satisfies Axiom (A1) of Definition \ref{d.anosovlike}. Our proof is divided into the following steps:

\begin{enumerate}[label=\textbf{Step \arabic*:}, leftmargin=*]
    \item Establish a necessary and sufficient condition for two points lying on the same periodic leaf of $\mathcal{F}^{s,u}$ to belong to a common rectangle of $\mathcal{R}$ (see Lemmas \ref{l.longrectanglesregularleaves} and \ref{l.longrectanglessingularleaves}).
    
    \item Combine this criterion with Lemma \ref{l.twofixedpointsinthesamerectangle} to show that every non-trivial element in the stabilizer of a leaf $l\in \mathcal{F}^{s,u}$ fixes exactly one periodic point in $l$ (see Lemmas \ref{l.longrectanglesregularleaves} and \ref{l.longrectanglessingularleaves}, as well as Corollary \ref{c.twofixedpointssameleaf}).
    
    \item Use this result together with a theorem of Solodov to prove that every periodic leaf of $\mathcal{F}^{s,u}$ contains a unique periodic point (see Lemma \ref{l.periodicleavescarryuniqueperiodicpoint}).
    
    \item Deduce that $\rho$ satisfies Axiom (A1) of Definition \ref{d.anosovlike} (see Lemma \ref{l.hyperbolicityforperiodicpoints}).
\end{enumerate}

In addition to the previous results, at the end of this section, we will prove a generalization of Lemmas \ref{l.longrectanglesregularleaves} and \ref{l.longrectanglessingularleaves} for non-periodic leaves in $\mathcal{F}^{s,u}$, a result that will be particularly useful to us in the final section of this paper. 

\begin{lemm}\label{l.longrectanglesregularleaves}
    Let $f$ be a regular periodic leaf in $\mathcal{F}^s$ and $D$ be the closure of a connected component of $\mathcal{P} - f$. For every $g\in \text{Stab}_{\rho}(f)$ we have that $\rho(g)$ fixes a point in $f$. Moreover, there exists a periodic point $p \in f$ such that:
    
   \begin{itemize}
       \item for every $x,y\in f$ belonging in the same $\mathcal{F}^s$-separatrix of $p$ in $f$ there exists $R\in \mathcal{R}$ containing the points $x,y$ and satisfying $\inte{R}\cap D\neq \emptyset$ 
       \item for any $x,y\in f$ belonging in different  $\mathcal{F}^s$-separatrices of $p$ in $f$ there exists $R\in \mathcal{R}$ containing the points $x,y$ and satisfying $\inte{R}\cap D\neq \emptyset$ if and only if there exists $R'\in \mathcal{R}$ containing $p$ and a neighborhood of $p$ inside $D$
   \end{itemize}
\end{lemm}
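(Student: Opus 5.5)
Fix an orientation of $f$ and choose the side $D$. I would study the set of ``$D$-segments'' of $f$: the segments of the form $R\cap f$ where $R\in\mathcal{R}$ is a rectangle having an $\mathcal{F}^s$-boundary component contained in $f$ and $\inte{R}\cap D\neq\emptyset$. By the strong finite return time axiom applied to germs inside $D$ at points of $f$, these segments cover $f$. Moreover, every rectangle $R\in\mathcal{R}$ with $x,y\in R$ and $\inte{R}\cap D\neq\emptyset$ meets $f$ along a single $\mathcal{F}^s$-segment (Proposition \ref{p.propertiesoffoliformarkovianactions}, Item (5)). Such an $R$ contains a horizontal subrectangle of this kind bounded by $f$: if $f$ crosses $\inte{R}$, one cuts $R$ along $f$ and uses successors via Lemma \ref{l.existenceofpredecessors}. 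So the whole question is reduced to comparing the segments $I_R:=R\cap f$ of $D$-rectangles.

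Now take $g\in\text{Stab}_\rho(f)$. Replacing $g$ by $g^2$ (no torsion, and $g$ preserves orientation), $\rho(g)$ preserves $D$ and the orientation of $f$. The first step is to show that $\rho(g)$ fixes a point of $f$. Suppose instead that it acts on $f\cong\mathbb{R}$ as a translation. Take a $D$-rectangle $R$ with segment $I_R$. The images $\rho(g^k)(R)$ are $D$-rectangles whose segments translate along $f$. By the finiteness axiom and the Markovian intersection axiom, consecutive ones overlap with interiors meeting near $f$, so each is a predecessor or successor of the previous one, and by Lemma \ref{l.preserverectangleimpliestorsion} it is of nonzero generation. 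Remark \ref{r.precedentsuivant} then gives a monotone chain. If the chain consists of successors, the expansivity axiom shrinks it to an $\mathcal{F}^s$-leaf, which contradicts the fact that the $f$-extent translates off to infinity. If it consists of predecessors, the $\mathcal{F}^s$-extent is nested and decreasing, which again contradicts translation. For this I would use the expansivity argument exactly as in the proof of Lemma \ref{l.twofixedpointsinthesamerectangle}. Having a fixed point for each $g$, Lemma \ref{l.twofixedpointsinthesamerectangle}, combined with the covering of $f$ by $D$-segments, shows that the fixed points of $\rho(g)$ are discrete.

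Next, to produce $p$, I would define $p$ as the point where $D$-segments ``cannot cross''. Consider the relation on $f$ in which $x\sim y$ when some $D$-rectangle contains both. I would show that $\sim$ is an equivalence relation with at most two classes, open half-lines separated by one point $p$. Transitivity comes from the Markovian intersection axiom: two $D$-rectangles whose segments overlap have intersecting interiors, so one contains a horizontal subrectangle of the other. Iterating predecessors (Lemma \ref{l.npredecessor}) then yields a common rectangle containing the union of the overlapping segments. At a point $q$ where $D$-segments only abut, without any overlap, $q$ is an endpoint of $\mathcal{F}^u$-boundaries of rectangles on both sides. These boundary leaves are periodic by Lemma \ref{l.boundaryleavesareperiodic}, so $q$ is periodic, being the intersection of a periodic $\mathcal{F}^u$-leaf with $f$, and some element of the stabilizer fixes it.

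The main obstacle is to prove that there is exactly one such breaking point $p$ (or, if there is none, to choose $p$ as the fixed point of a stabilizer element). I would obtain this from the uniqueness of fixed points. A nontrivial $g\in\text{Stab}_\rho(f)$ (one exists since $f$ is periodic) must permute the breaking points preserving order. I expect to show that each breaking point is fixed by a common power of $g$, which would violate Lemma \ref{l.twofixedpointsinthesamerectangle} through adjacent rectangles. I would also show that the breaking points are not a translated orbit, which is impossible since $g$ has a fixed point. The second bullet then says that $p$ is not a breaking point exactly when a $D$-rectangle contains $p$ together with a neighborhood of $p$ in $D$. This follows from the transitivity argument applied at $p$.
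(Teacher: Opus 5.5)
There is a genuine gap. You flag uniqueness of the breaking point as the main obstacle, but you never prove it, and your fixed-point step depends on it too.

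\textbf{The missing idea.} The paper proves that $D$-rectangles through a point of $f$ are \emph{long}. Fix $x\in f$ and the chain $R_0,R_1,\dots$ in which $R_{i+1}$ is a successor of $R_i$ containing $x$ with $\inte{R_{i+1}}\cap D\neq\emptyset$; the chain never terminates, by Lemma \ref{l.existenceofpredecessors}. Set $J=\bigcup_i R_i\cap f$. Since $R_{i+1}\cap R_i$ is a \emph{non-trivial} vertical subrectangle of $R_{i+1}$, the segments $R_i\cap f$ increase \emph{strictly}, so $J$ cannot be compact. $J$ is also closed: otherwise, by expansivity, the $R_i$ become thin in the $\mathcal{F}^u$-direction and meet non-Markovianly a rectangle $r$ through the first missing point. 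Hence $J$ is all of $f$ or a closed half-line. Rerunning the argument from the endpoint $p$ shows the other separatrix is exhausted as well, so there is at most one breaking point. Nothing in your proposal rules out, say, a tiling of $f$ by compact maximal segments with infinitely many breaking points. Your plan to show that ``each breaking point is fixed by a common power of $g$'' has no mechanism behind it. It is also circular, since it uses the fixed point of $g$, which you obtained from an unjustified step.

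\textbf{The unjustified step and two further errors.}
\begin{itemize}
\item Your fixed-point argument assumes that $\rho(g^k)(R)$ and $\rho(g^{k+1})(R)$ overlap. Neither the finiteness nor the Markovian intersection axiom gives this, since a translation can push $I_R$ off itself. Producing a single rectangle containing both $y$ and $\rho(g)(y)$ is exactly what requires $J=f$ or the breaking-point characterization. Once such an overlap exists, the fixed point is immediate, as in the paper: $\rho(g)(R)$ is a predecessor or successor of $R$, so $\rho(g)(R\cap f)\subseteq R\cap f$ or $\rho(g)(R\cap f)\supseteq R\cap f$. No chain or expansivity is needed there.
\item You argue that a breaking point $q$ is periodic because it lies on $f$ and on a periodic $\mathcal{F}^u$-boundary leaf. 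This is invalid: two periodic leaves may have stabilizers meeting only in $\{e\}$, and then their intersection point need not be periodic. In the paper, periodicity of $p$ comes only \emph{after} uniqueness. An element of $\text{Stab}_\rho(f)$ preserving the orientation of $f$ preserves $D$ and $\mathcal{R}$, hence fixes the unique breaking point, and absence of torsion concludes.
\item Restricting to rectangles with an $\mathcal{F}^s$-boundary component on $f$ is not harmless. A periodic leaf need not be a boundary leaf, and cutting $R$ along $f$ produces no element of $\mathcal{R}$, because $\mathcal{F}^s$-boundaries of successors lie on boundary leaves. Work instead with all $R\in\mathcal{R}$ meeting $f$ with $\inte{R}\cap D\neq\emptyset$. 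Your transitivity argument for non-trivially overlapping segments still applies there; in fact such segments are nested.
\end{itemize}
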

Naturally, the previous lemma applies for leaves in $\mathcal{F}^u$ too. 
\begin{proof}
  Consider $f$ and $D$ as in the above statement. Take $R_0\in \mathcal{R}$ intersecting $f$ and satisfying $\inte{R_0}\cap D\neq \emptyset$ (such a rectangle exists thanks to the strong finite return time axiom). Recall that by Item (5) of Proposition \ref{p.propertiesoffoliformarkovianactions}, $R_0\cap f$ is an $\mathcal{F}^s$-leaf of $R_0$. Consider now  $x\in f$ such that $R_0$ intersects non-trivially both $\mathcal{F}^s$-separatrices of $x$, which will by denoted by $\mathcal{F}^s_+(x)$ and $\mathcal{F}^s_-(x)$ respectively.  

By a repeated application of Lemma \ref{l.existenceofpredecessors}, there exists  $R_0,R_1,R_2,...$ a unique sequence of rectangles in $\mathcal{R}$ such that for every $i\in \mathbb{N}$ the rectangle $R_{i+1}$ is a successor of $R_i$,  contains $x$ and satisfies $\inte{R_i}\cap D\neq \emptyset$ (see Figure \ref{f.rectcontainstwopointscase1}). Thanks to Lemma \ref{l.npredecessor}, the previous sequence of rectangles consists of all the successors of some generation of $R_0$ that contain $x$ and whose interiors intersect $D$. 
    
    By construction, each $R_i$ intersects non-trivially both $\mathcal{F}^s_+(x)$ and $\mathcal{F}^s_-(x)$. Let $I_i^s:=R_i\cap f$. By Item (5) of Proposition \ref{p.propertiesoffoliformarkovianactions}, $I^s_i$ is an $\mathcal{F}^s$-leaf of $R_i$. Moreover, thanks to the Markovian intersection axiom, we have that $I^s_i\subseteq I^s_{i+1}$. Consider $J:=\underset{i\in\mathbb{N}}{\cup}I^s_i\cap f$. 

    We consider the following four cases. The first two are shown to be impossible, while the lemma is proved separately in each of the last two cases.
    
    \vspace{0.4cm}
    \textbf{Case 1: $J$ is not closed in $f$}  
    
    After possibly reversing the roles of $\mathcal{F}^s_+(x), \mathcal{F}^s_-(x)$, assume without any loss of generality that there exists a first element $t\in \mathcal{F}^s_+(x)$ that does not belong in $J$; hence $J\cap \mathcal{F}^s_+(x)=[x,t)^s$ (see Figure \ref{f.rectcontainstwopointscase1}). Consider $r\in \mathcal{R}$ containing $t$, intersecting non-trivially $J$ and satisfying $\inte{r}\cap D\neq \emptyset$ (such a rectangle exists thanks to the strong finite return time axiom). By the definition of $J$ and the expansivity axiom, when $i$ is sufficiently big, the rectangle $R_i$ is very thin in the $\mathcal{F}^u$ direction, intersects the interior of $r$ and does not contain $t$. Therefore, for $i$ sufficiently big the intersection between $R_i$ and $r$ can not be Markovian, which is absurd. We deduce that $J$ needs to be a closed subset of $f$.

\begin{figure}[h]
    \centering
    \includegraphics[scale=0.7]{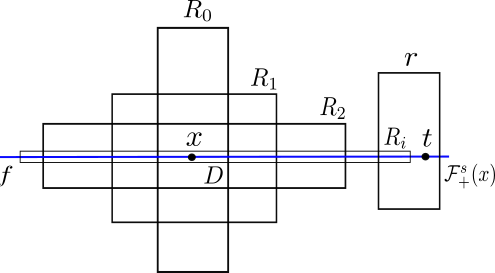}
    \caption{Case 1 contradicts the Markovian intersection axiom.}
    \label{f.rectcontainstwopointscase1}
\end{figure}
   
   \Needspace{4\baselineskip}
    \textbf{Case 2: $J$ is a compact interval in $f$}

   Consider $s,t \in f$ such that $J = [s,t]^s$.  In this case, there exists $i_0\in \mathbb{N}$ such that for every $i \geq i_0$, we have that $s,t \in \partial^u R_i$ and thus also that $R_i\cap f=J$. Since $R_{i+1}$ is a successor of $R_i$, the previous facts imply that  $\partial ^uR_{i+1}\subseteq \partial ^uR_i$. This contradicts the Markovian intersection axiom, according to which $R_{i+1}\cap R_i$ is a \emph{non-trivial} vertical subrectangle of $R_{i+1}$. We deduce that $J$ is a closed subset of $f$ that is not compact.

    \vspace{0.4cm}
    \textbf{Case 3: $J$ coincides with $f$}

   In this case, for any two points $y,y'\in f$ there exists $R\in \mathcal{R}$ containing the points $y,y'$ and satisfying $\inte{R}\cap D\neq \emptyset$. In particular, for any point $y\in f$ there exists $R\in \mathcal{R}$ containing $y$ and a neighborhood of $y$ inside $D$. Thanks to the previous facts, in order to prove the lemma in this case, it suffices to show that $\rho(g)$ fixes a point in $f$ for every $g\in \text{Stab}_{\rho}(f)$. Fix $g\in \text{Stab}_{\rho}(f)-\{e\}$. 
   
  First, notice that if $\rho(g)$ reverses the orientation of $f$, then clearly it fixes a point in $f$. On the contrary, if $\rho(g)$ preserves the orientation of $f$, then using the fact that $\rho$ acts by orientation preserving homeomorphisms on $\mathcal{P}$, we have that $$\rho(g)(D)=D$$ 
    
  Consider now $y\in f$ and $j\in \mathbb{N}$ sufficiently big so that $R_j$ contains both $y$ and $\rho(g)(y)\in f$ (such a $j\in \mathbb{N}$ exists, since $J=f$). By taking $j$ sufficiently large, we may further assume that $y, \rho(g)(y) \notin \partial^u R_j$; hence $R_j$ contains a neighborhood of $y$ in $D$ and a neighborhood of $\rho(g)(y)$ in $D$.

  By Lemma  \ref{l.preserverectangleimpliestorsion}, we have that $\rho(g)(R_j)\neq R_j$. Also, thanks to our previous hypotheses, we have that $\inte{R_j}\cap \rho(g)(\inte{R_j}) \neq \emptyset$. Therefore, the rectangles $R_j$ and $\rho(g)(R_j)$ intersect Markovianly. Assume, without loss of generality, that $\rho(g)(R_j)$ is a predecessor of $R_j$ of some generation. Since $g\in \mathrm{Stab}_{\rho}(f)$, we have $$ \rho(g)(R_j\cap f)=\rho(g)(R_j)\cap f \subset R_j\cap f$$ It follows that $\rho(g)$ fixes a point in $\rho(g)(R_j)\cap f$, which  concludes the proof the lemma in this case.

 \vspace{0.4cm}
    \textbf{Case 4: $J\neq f$, $J$ is closed and unbounded in $f$}

   In this case, by our construction of $J$, we have that $J$ is an unbounded and closed interval in $f$ with a unique extremity, which will be denoted by $p$ (see Figure \ref{f.rectcontainstwopointscase2}). Also, similarly to Case 2, there exists $i_0\in \mathbb{N}$ such that for every $i\geq i_0$ we have that $p\in \partial^u R_i$. 
   
   Let us now show there does not exist a rectangle $R\in \mathcal{R}$ containing $p$ and a neighborhood of $p$ inside $D$. Assume by contradiction that such a rectangle $R\in \mathcal{R}$ exists. Since $\inte{R_i}\cap D\neq \emptyset$ for every $i\in \mathbb{N}$, we have that $\inte{R_i}\cap \inte{R}\neq \emptyset$ for any $i\geq i_0$. Therefore, by Lemma \ref{l.npredecessor}, we get that $R$ is either a predecessor of some generation or a successor of some generation of $R_{i_0}$. If $R$ is a predecessor of some generation of $R_{i_0}$, then we have that $p\in \partial^u R$, which contradicts the fact that $r$ contains a neighborhood of $p$ inside $D$. If, on the other hand, $r$ is a successor of some generation of $R_{i_0}$, then $R$ contains $x$ and satisfies $\inte{r}\cap D\neq \emptyset$; hence, by our construction of the sequence $(R_n)_{n\in\mathbb{N}}$, we have that there exists $j>i_0$ such that $R=R_j$, which contradicts the fact that $p$ is an extremity of $J$.

   We deduce that there does not exist a rectangle $R\in \mathcal{R}$ containing $p$ and a neighborhood of $p$ inside $D$. In particular, if we denote by $\mathcal{F}^s_+(p)$ and $\mathcal{F}^s_-(p)$ the two $\mathcal{F}^s$-separatrices of $p$, then for any $y\in \mathcal{F}^s_+(p)-\{p\}$ and any  $y'\in \mathcal{F}^s_-(p)-\{p\}$ we have that there does not exist a rectangle $R\in \mathcal{R}$ containing $y,y'$ and satisfying $\inte{R}\cap D\neq \emptyset$. 

\begin{figure}[h!]
    \centering
    \includegraphics[scale=0.8]{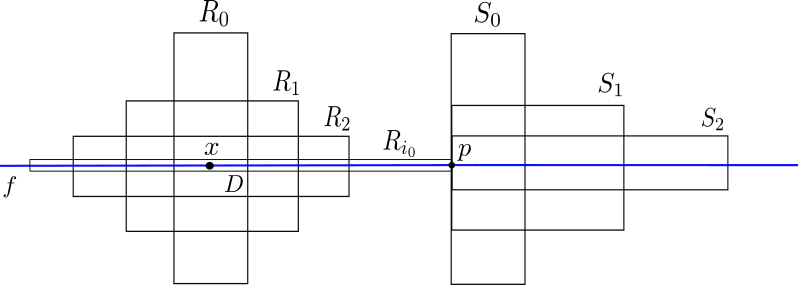}
    \caption{Case 2 contradicts Lemma \ref{l.twofixedpointsinthesamerectangle}.}
    \label{f.rectcontainstwopointscase2}
\end{figure}
   
Assume without loss of generality that $J=\mathcal{F}_-^s(p)$. Summarizing our previous arguments, we have established the following:

\begin{itemize}
\item For any two points $y,y'\in \mathcal{F}_-^s(p)$, there exists a rectangle $r\in \mathcal{R}$ such that $y,y'\in r$ and $\inte{r}\cap D\neq \emptyset$.

\item There does not exist a rectangle $r'\in \mathcal{R}$ containing $p$ and a neighborhood of $p$ inside $D$.

\item Any two points $y,y'\in f$ belonging to different $\mathcal{F}^s$-separatrices of $p$ cannot belong to a common rectangle $r''\in \mathcal{R}$ satisfying $\inte{r''}\cap D\neq \emptyset$.

\end{itemize}

To complete the proof of the lemma in this final case, it remains to establish the analogue of the first statement above for the separatrix $\mathcal{F}_+^s(p)$, to prove that $p$ is periodic, and to show that $\rho(g)$ fixes a point in $f$ for every $g\in \text{Stab}_{\rho}(p)$.

We begin with the first of these assertions. Take $S_0\in \mathcal{R}$ such that $S_0$ contains $p$, intersects non-trivially $\mathcal{F}^s_+(p)$ and satisfies $\inte{S_0}\cap D\neq \emptyset$. By a repeated application of Lemma \ref{l.existenceofpredecessors}, there exists  $S_0,S_1,S_2,...$ a unique sequence of rectangles in $\mathcal{R}$ such that for every $i\in \mathbb{N}$ the rectangle $S_{i+1}$ is a successor of $S_i$,  contains $p$ and satisfies $\inte{S_i}\cap D\neq \emptyset$. By the exact same argument as the one used for the sequence $(R_n)_{n\in \mathbb{N}}$, we have that $$ \underset{n\in \mathbb{N}}{\cup}S_n\cap f=\mathcal{F}^s_+(p)$$

   \noindent We deduce that for any $y,y'\in \mathcal{F}^s_+(p)$ there exists  $R\in \mathcal{R}$ containing $y,y'$ and satisfying $\inte{R}\cap D\neq \emptyset$.

  Next, let us show that $\rho(g)$ has a fixed point in $f$ for every $g\in \text{Stab}_{\rho}(f)$. Indeed, fix $g\in \text{Stab}_{\rho}(p)-\{e\}$. If $\rho(g)$ reverses the orientation of $f$, then clearly $\rho(g)$ has a fixed point in $f$. Suppose from now on that $\rho(g)$ preserves the orientation of $f$. Thanks to this fact and to the fact that $\rho$ acts by orientation preserving homeomorphisms on $\mathcal{P}$, we have that $$\rho(g)(D)=D$$ \noindent Even more, by our previous arguments, any two points $y,y'$ in $f$ are contained in a rectangle in $\mathcal{R}$ that intersects $D$ non-trivially if and only if $y,y'$ belong in the same $\mathcal{F}^s$-separatrix of $p$. As the Markovian family $\mathcal{R}$ and the half-plane $D$ are both preserved by $\rho(g)$, we get that $\rho(g)$ fixes $p$. 

  Finally, take any $g\in \text{Stab}_{\rho}(p)-\{e\}$. The arguments of the previous paragraph imply that $\rho(g^2)(p)=p$. Since $G$ has no torsion, we deduce that $p$ is periodic, which finishes the proof of the lemma. 
\end{proof}

\begin{lemm}\label{l.longrectanglessingularleaves}
    Let $f$ be a singular leaf in $\mathcal{F}^s$.  

   \begin{itemize}
        \item for every $x,y\in f$, if there exists $R\in \mathcal{R}$ containing $x,y$, then there exists $D$ the closure of a connected component of $\mathcal{P}-f$ such that $x,y\in D\cap f$ and $\inte{R}\cap D\neq \emptyset$.
    \end{itemize}
Consider $D$ to be the closure of a connected component of $\mathcal{P}-f$. Let $f_D:=f\cap D$. There exists a periodic point $p \in f_D$ such that:
    
   \begin{itemize}
       \item for every $x,y\in f_D$ belonging in the same $\mathcal{F}^s$-separatrix of $p$ in $f$ there exists $R\in \mathcal{R}$ containing the points $x,y$ and satisfying $\inte{R}\cap D\neq \emptyset$ 
       \item for any $x,y\in f_D$ belonging in different  $\mathcal{F}^s$-separatrices of $p$ in $f$ there exists $R\in \mathcal{R}$ containing the points $x,y$ and satisfying $\inte{R}\cap D\neq \emptyset$ if and only if there exists $R'\in \mathcal{R}$ containing $p$ and a neighborhood of $p$ in $D$
   \end{itemize}
\end{lemm}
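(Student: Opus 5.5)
The first bullet needs only local structure. I would work inside the singular leaf $f=\mathcal{F}^s(\sigma)$, where $\sigma$ is a $k$-prong singularity with $k\geq 3$. Let $R\in\mathcal{R}$ contain $x,y\in f$. No rectangle contains a singular point in its interior, and $R$ is trivially bifoliated. By Item (5) of Proposition \ref{p.propertiesoffoliformarkovianactions}, $R\cap f$ is a single $\mathcal{F}^s$-segment of $R$: either an $\mathcal{F}^s$-leaf of $R$, or a component of $\partial^sR$. A segment of $f$ containing $\sigma$ in its relative interior passes from one separatrix to another, so it bounds exactly one of the $k$ components of $\mathcal{P}-f$ on each side. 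If the segment is interior to $R$, the rectangle contains points on both sides of the segment near $\sigma$. That is impossible when $\sigma$ is interior to the segment, since then $\sigma$ would be an interior point of $R$. Hence either $\sigma$ is not in the relative interior of $R\cap f$, or $R\cap f\subset\partial^sR$. In both cases $R\cap f$ lies in the union of two consecutive separatrices bounding a common face. Choosing $D$ as the closure of the component of $\mathcal{P}-f$ that $\inte{R}$ meets, on the side of this segment, gives $x,y\in D\cap f$ and $\inte{R}\cap D\neq\emptyset$.

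For the main statement, I would fix $D$ and note that $f_D=f\cap D$ is a properly embedded line consisting of two consecutive separatrices of $\sigma$ (Item (3) of Proposition \ref{p.propertiesoffoliformarkovianactions}). I would then repeat the proof of Lemma \ref{l.longrectanglesregularleaves} verbatim with $f$ replaced by $f_D$. Start with $R_0$ meeting $f_D$ with $\inte{R_0}\cap D\neq\emptyset$ and a point $x\in f_D$ whose two sides in $f_D$ both meet $R_0$. Take the unique chain of successors $R_i$ containing $x$ and meeting $D$, and set $J=\bigcup_i R_i\cap f_D$. The segments $R_i\cap f$ stay inside $f_D$ by the first bullet, because a rectangle meeting $D$ in its interior meets $f$ only along $f_D$. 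Cases 1 and 2 are excluded exactly as before, using the Markovian intersection axiom and the expansivity axiom near an endpoint of $J$. Case 3, $J=f_D$, cannot occur, because $\sigma\in f_D$ is singular and therefore periodic by Lemma \ref{l.singularareperiodic}. More simply, I would take $p=\sigma$ in Case 3: the first bullet then holds trivially, and the second holds because both sides of the equivalence are true. In Case 4 I would take $p$ to be the endpoint of $J$ and argue as before that no rectangle contains a neighborhood of $p$ in $D$. The second separatrix of $p$ inside $f_D$ is then exhausted by a second successor chain $S_i$.

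It remains to show that $p$ is periodic in Case 4. Let $H=\mathrm{Stab}_\rho(f)$, which is nontrivial because $\sigma$ is periodic. Every $g\in H$ fixes $\sigma$ and permutes the $k$ faces of $f$. Since $G$ is torsion free and $\rho(g)$ preserves orientation, $\rho(g^{k!})$ preserves $D$ and each separatrix of $\sigma$, hence also the orientation of $f_D$. Then $\rho(g^{k!})$ preserves both $\mathcal{R}$ and $D$, so it preserves the characterization of $p$ as the unique point of $f_D$ separating the pairs of points that share a rectangle meeting $D$. Therefore $\rho(g^{k!})(p)=p$ with $g^{k!}\neq e$, so $p$ is periodic.

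The main obstacle is the first bullet together with checking that the regular-leaf argument transfers, in particular that rectangles cannot straddle $\sigma$ across two different faces. This needs care when $\sigma\in\partial^sR$ is a corner or a boundary point of $R$. I would handle it with the local model $\mathcal{D}_k$ and Item (5) of Proposition \ref{p.propertiesoffoliformarkovianactions}.
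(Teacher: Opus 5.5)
Your proposal is correct and follows the paper's route. The paper likewise derives the first item from Item (5) of Proposition \ref{p.propertiesoffoliformarkovianactions} and the trivial bifoliation of rectangles. It obtains the rest by adapting the proof of Lemma \ref{l.longrectanglesregularleaves} to $f_D$, where $p$ is periodic because a nontrivial element preserving $D$ and $\mathcal{R}$ must fix the point singled out by the rectangle relation on $f_D$, exactly as you argue.

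One correction: drop the claim that Case 3 cannot occur. A rectangle may have $\sigma$ in the relative interior of an $\mathcal{F}^s$-boundary component and lie in $D$, and the periodicity of $\sigma$ does not rule this out. Your fallback of taking $p=\sigma$ in that case is the right treatment.
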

The second and third items of the above lemma follow from a straightforward adaptation
of the proof of Lemma \ref{l.longrectanglesregularleaves}, while the first item is a direct consequence of Item (5)
of Proposition \ref{p.propertiesoffoliformarkovianactions} together with the fact that every rectangle is, by definition, trivially bifoliated by $(\mathcal{F}^s,\mathcal{F}^u)$. We will therefore omit the proof of the previous lemma.

\begin{comment}
By slightly modifying our proof of Lemma \ref{l.longrectanglesregularleaves}, we will now generalize Lemma \ref{l.longrectanglesregularleaves} for the leaves in $\mathcal{F}^s$ that do not contain periodic points. 

\end{comment}

\begin{coro}\label{c.twofixedpointssameleaf}
     For any $g\in G-\{e\}$ and any $f\in \mathcal{F}^s$ the homeomorphism $\rho(g)$ can fix at most one point inside $f$.
\end{coro}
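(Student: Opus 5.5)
The plan is to argue by contradiction and reduce to Lemma \ref{l.twofixedpointsinthesamerectangle}: two fixed points on the same leaf will be shown to lie in a common rectangle of $\mathcal{R}$. So suppose $g\in G-\{e\}$ fixes two distinct points $x,y$ of a leaf $f\in\mathcal{F}^s$. Since $G$ has no torsion, we may replace $g$ by a power $g^n\neq e$. We choose $n$ so that $\rho(g)$ preserves every $\mathcal{F}^s$- and $\mathcal{F}^u$-separatrix of $x$ and of $y$. In particular $\rho(g)$ preserves $f$, its orientation when $f$ is regular, and each complementary component of $f$ together with its closure. Fix such a closure $D$. If $f$ is singular, choose $D$ so that $x,y\in f_D=f\cap D$; this is possible because $x$ and $y$ lie on at most two separatrices of the singular point, and two separatrices are always adjacent along a common face only if the path between them lies in $f_D$. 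I expect the main bookkeeping difficulty to be here: the segment $[x,y]^s$ of $f$ must lie in a single face of $f$. This holds because $[x,y]^s$ is either contained in one separatrix or passes through the singular point $\sigma$ using two separatrices. In the latter case both $x$ and $y$ are fixed, so $\sigma$ is fixed as well, and we can handle the pairs $(x,\sigma)$ and $(\sigma,y)$ instead.

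Next, apply Lemma \ref{l.longrectanglesregularleaves} (or Lemma \ref{l.longrectanglessingularleaves} in the singular case) to $f$ and $D$. This gives a periodic point $p\in f_D$ such that any two points in the same $\mathcal{F}^s$-separatrix of $p$ lie in a common rectangle $R\in\mathcal{R}$ with $\inte{R}\cap D\neq\emptyset$. There are two cases. If $x,y$ lie in the same separatrix of $p$, such an $R$ contains two fixed points of $\rho(g)$, contradicting Lemma \ref{l.twofixedpointsinthesamerectangle}. Otherwise $x$ and $y$ lie strictly on opposite sides of $p$, and we may assume $p\neq x,y$, since otherwise we are in the first case.

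The main obstacle is this second case. Since $\rho(g)$ preserves $f$ and its orientation, it preserves $D$ and $\mathcal{R}$. By the construction in the proof of Lemma \ref{l.longrectanglesregularleaves}, the point $p$ is canonically determined by $(f,D,\mathcal{R})$: either it is the extremity of the maximal interval $J$, or, in Case 3 where $J=f$, the characterization does not pin $p$ down. In the first situation $\rho(g)$ fixes $p$, so $x$ and $p$ are two distinct fixed points in one separatrix of $p$. Applying the first case to the pair $(x,p)$ gives a rectangle containing two fixed points of $\rho(g)$, which is again a contradiction. In the second situation every pair of points of $f_D$ lies in a common rectangle meeting $D$, and we conclude directly. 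The only real care is to check that the fixed-point choice of $p$ can indeed be made equivariantly, and this follows from the uniqueness of the successor sequences used in that proof.
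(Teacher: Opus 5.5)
Your argument is correct and is essentially the paper's proof. You pass to a power so that $\rho(g)$ preserves $D$, and in the singular case replace one of the two points by the singular point. You then use Lemma~\ref{l.longrectanglesregularleaves} (or Lemma~\ref{l.longrectanglessingularleaves}) to get $\rho(g)(p)=p$, since $p$ is determined by the $\rho(g)$-invariant data $(\mathcal{R},D)$. Finally you find a rectangle containing two fixed points, contradicting Lemma~\ref{l.twofixedpointsinthesamerectangle}.

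One justification should be tightened: $\rho(g)(p)=p$ does not follow from uniqueness of successor sequences, because those depend on the initial choice of $R_0$ and of the base point. Use instead the lemma's own characterization. When no rectangle of $\mathcal{R}$ contains $p$ together with a neighborhood of $p$ in $D$, $p$ is the unique point of $f_D$ that lies in a common $R\in\mathcal{R}$ with $\inte{R}\cap D\neq\emptyset$ with every other point of $f_D$. This property is preserved by $\rho(g)$, so $\rho(g)$ fixes $p$.
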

Naturally, the previous lemma applies for leaves in $\mathcal{F}^u$ too. 
\begin{proof}
   Suppose by contradiction that there exist  $f\in \mathcal{F}^s$ and $g\in \text{Stab}_{\rho}(f)-\{e\}$ such that $\rho(g)$ fixes two distinct points in $f$, say $x$ and $y$. 
    
    \vspace{0.5cm}
    \textbf{Case 1: $f$ is a regular leaf of $\mathcal{F}^s$} 

    Consider $D$ the closure of a connected component of $\mathcal{P}-f$. Using the fact that $G$ has no torsion, by possibly replacing $g$ by $g^2$, assume without any loss of generality that $\rho(g)$ preserves the orientation of $f$. Thanks to this fact that to the fact that $\rho$ acts by orientation preserving homeomorphisms on $\mathcal{P}$, we have that $$\rho(g)(D)=D$$ Next, recall that by Lemma \ref{l.longrectanglesregularleaves}, there exists a periodic point $p$ in $f$ such that any two points in $f$ are contained in a rectangle $R\in \mathcal{R}$ satisfying $\inte{R}\cap D\neq \emptyset $ if and only if they lie in the same $\mathcal{F}^s$-separatrix of $p$. By the above characterization, 
    
    \begin{itemize}
        \item  since the Markovian family $\mathcal{R}$ and the half-plane $D$ are both preserved by $\rho(g)$, we have that $\rho(g)(p)=p$.

        \vspace{0.1cm}
        \item there exist $R_x,R_y\in \mathcal{R}$ such that $p,x\in R_x$ and $p,y\in R_y$.
    \end{itemize}
    
    This forces $\rho(g)$ to admit two fixed points in either $R_x$ or $R_y$, which contradicts Lemma \ref{l.twofixedpointsinthesamerectangle} and leads to an absurd.

    \vspace{0.5cm}
    \textbf{Case 2: $f$ is a singular leaf of $\mathcal{F}^s$}

    In this case, $\rho(g)$ must fix the unique singular point in $f$. We can therefore assume without any loss of generality that $y$ coincides with the singular point in $f$. By adapting the argument used in Case 1, one may verify that this case also leads to a contradiction.
\end{proof}
\begin{coro}\label{c.uniqueperiodicpointinsingular}
    The only periodic point contained in a singular leaf $f$ of $\mathcal{F}^{s,u}$ is the unique singular point in $f$.  
\end{coro}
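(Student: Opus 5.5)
The plan is to combine Lemma \ref{l.singularareperiodic} with Corollary \ref{c.twofixedpointssameleaf}. Let $f$ be a singular leaf of $\mathcal{F}^s$; the case of $\mathcal{F}^u$ is symmetric. Let $\sigma$ be the unique singular point of $f$, which is unique by Definition \ref{d.singularfolisurfaces}. By Lemma \ref{l.singularareperiodic}, $\sigma$ is periodic, so it remains to show that no other point of $f$ is periodic.

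Suppose, for contradiction, that $q\in f-\{\sigma\}$ is periodic, and take $g\in G-\{e\}$ with $\rho(g)(q)=q$. Since $\rho(g)$ preserves $\mathcal{F}^s$, it sends $f=\mathcal{F}^s(q)$ to the leaf of $\rho(g)(q)=q$. Hence $\rho(g)(f)=f$.

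Next, $\rho(g)$ is a homeomorphism preserving the singular foliation. It therefore sends singular points to singular points of the same prong number, since the local model around a $p$-prong point is distinguished topologically among leaves. As $\sigma$ is the only singular point of $f$ and $\rho(g)(f)=f$, we get $\rho(g)(\sigma)=\sigma$.

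Thus $\rho(g)$, a non-trivial element, fixes the two distinct points $\sigma$ and $q$ of the same leaf $f$. This contradicts Corollary \ref{c.twofixedpointssameleaf}, so $\sigma$ is the only periodic point of $f$.

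The only real work is hidden in Corollary \ref{c.twofixedpointssameleaf}, and specifically in its singular-leaf case, which the paper only sketches by adaptation of Case 1. If that case has to be made explicit, I would do the following. Replace $g$ by a power so that $\rho(g)$ fixes every separatrix of $\sigma$; this is possible because $G$ has no torsion and $\sigma$ has finitely many prongs. Let $D$ be a half-plane component of $\mathcal{P}-f$ whose boundary $f_D$ contains $q$. Then $\rho(g)(D)=D$, because $\rho(g)$ preserves the two separatrices bounding $D$. Apply Lemma \ref{l.longrectanglessingularleaves} to obtain the distinguished periodic point $p\in f_D$. This $p$ is characterized $\rho(g)$-equivariantly, so $\rho(g)(p)=p$. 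The points $q$ and $\sigma$ each lie in a common rectangle with $p$ meeting $D$, namely whichever rectangle corresponds to the separatrix of $p$ containing them. Lemma \ref{l.twofixedpointsinthesamerectangle} then gives a contradiction unless $p$ coincides with both $q$ and $\sigma$, which is impossible since $q\neq\sigma$.
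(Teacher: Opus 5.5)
Your argument is correct and is exactly the paper's. A non-trivial $g$ fixing a regular point $q\in f$ preserves $f$, hence fixes its unique singular point $\sigma$, which contradicts Corollary~\ref{c.twofixedpointssameleaf}; you also usefully make explicit, via Lemma~\ref{l.singularareperiodic}, that $\sigma$ itself is periodic.

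One small caveat concerns your optional sketch of the singular case of Corollary~\ref{c.twofixedpointssameleaf}. There, $p$ is determined $\rho(g)$-equivariantly only when no rectangle of $\mathcal{R}$ contains $p$ together with a neighborhood of $p$ in $D$. In the opposite case, any two points of $f_D$ already lie in a common rectangle meeting $D$, so $q$ and $\sigma$ contradict Lemma~\ref{l.twofixedpointsinthesamerectangle} directly.
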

\begin{proof}
    Take $f$ a singular leaf of $\mathcal{F}^{s,u}$. Assume that there exists $p\in f$ a non-singular periodic point. If this were true, there would exist $g\in G-\{e\}$ such that $\rho(g)(p)=p$. This is impossible, as $\rho(g)$ would also fix the unique singular point in $f$, which would contradict Corollary \ref{c.twofixedpointssameleaf}. 
\end{proof}
The previous corollary can be generalized for all periodic leaves in $\mathcal{F}^{s,u}$: 

\begin{lemm}\label{l.periodicleavescarryuniqueperiodicpoint}
    Let $f$ be a periodic leaf in $\mathcal{F}^s$ or $\mathcal{F}^u$. There exists a unique periodic point of $\rho$ inside $f$. 
\end{lemm}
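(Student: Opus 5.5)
The plan is to treat singular and regular leaves separately, and in the regular case to reduce the statement to a rigidity result for group actions on the line. By symmetry it suffices to treat $f\in\mathcal{F}^s$.

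\textbf{Singular leaves.} If $f$ is singular, Lemma \ref{l.singularareperiodic} shows that the singular point of $f$ is periodic. Corollary \ref{c.uniqueperiodicpointinsingular} shows that it is the only periodic point of $f$. So only regular leaves remain.

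\textbf{Regular leaves: setting up the action on $f$.} Let $f$ be a regular periodic leaf, so $f\cong\mathbb{R}$ and $H:=\text{Stab}_{\rho}(f)$ is non-trivial. Existence of a periodic point is given by Lemma \ref{l.longrectanglesregularleaves}: any $g\in H-\{e\}$ fixes a point of $f$, and that point is then periodic. For uniqueness, let $H_+\subseteq H$ be the subgroup of index at most $2$ whose elements preserve the orientation of $f$. If $p,q\in f$ are periodic, fixed by $g,h\in G-\{e\}$, then $g,h\in H$, since they fix a point of $f$ and preserve $\mathcal{F}^s$. Moreover $g^2,h^2\in H_+-\{e\}$ because $G$ has no torsion, and they still fix $p$ and $q$ respectively.

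The restriction map $H_+\to\text{Homeo}_+(f)$ is injective: a non-trivial element acting trivially on $f$ would fix infinitely many points of $f$, contradicting Corollary \ref{c.twofixedpointssameleaf}. Combining Lemma \ref{l.longrectanglesregularleaves} with Corollary \ref{c.twofixedpointssameleaf}, every non-trivial element of $H_+$ acts on $f\cong\mathbb{R}$ with \emph{exactly one} fixed point. It therefore suffices to show that all non-trivial elements of such a group share the same fixed point; then $p=q$.

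\textbf{Applying Solodov's theorem.} A group of orientation-preserving homeomorphisms of $\mathbb{R}$ in which every non-trivial element has at most one fixed point is either abelian or conjugate to a subgroup of the affine group $\text{Aff}_+(\mathbb{R})$. I would handle the two cases as follows.
\begin{itemize}
\item \emph{Abelian case.} If $a,b\in H_+$ are non-trivial and commute, then $b$ maps the unique fixed point of $a$ to a fixed point of $a$. Hence $b$ fixes it, and by uniqueness it is also the fixed point of $b$.
\item \emph{Affine case.} Each non-trivial element is conjugate to some $x\mapsto \lambda x+c$ with $\lambda\neq1$, since translations have no fixed point. Suppose two such elements $a,b$ have distinct fixed points. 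Then their commutator is a translation, because the linear parts commute, and it is non-trivial, because affine maps with $\lambda\neq 1$ commute only if they share their fixed point. This is a non-trivial element of $H_+$ with no fixed point, a contradiction.
\end{itemize}

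\textbf{Main obstacle.} The delicate point is checking the hypotheses of Solodov's theorem precisely: faithfulness of the restricted action, orientation preservation, and the exactly-one-fixed-point property. Once these are verified, the case analysis above is short. If the available version of Solodov's theorem is stated with the dichotomy in a different form (for instance, via semi-conjugacy to translations), I would redo the abelian and affine arguments in that setting, using commutators in the same way.
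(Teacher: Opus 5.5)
Your proposal follows the paper's proof. The singular case is Corollary \ref{c.uniqueperiodicpointinsingular}. For a regular leaf, Lemma \ref{l.longrectanglesregularleaves} and Corollary \ref{c.twofixedpointssameleaf} show that every non-trivial element of the stabilizer has exactly one fixed point on $f\cong\mathbb{R}$, and Solodov's theorem then forces a global fixed point. Passing to the orientation-preserving subgroup $H_+$ via $g\mapsto g^2$ is a useful bit of care that the paper's write-up skips, since Solodov's theorem concerns $\mathrm{Homeo}_+(\mathbb{R})$.

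The one thing to fix is the version of Solodov's theorem you quote. It does not say ``abelian or conjugate to a subgroup of $\mathrm{Aff}_+(\mathbb{R})$''. It says that a group with no global fixed point is \emph{semi}conjugate to such a subgroup, and conjugacy can genuinely fail. For example, blow up a free orbit of the non-abelian affine group $\{x\mapsto 2^n x+b\}$ with $b\in\mathbb{Z}[1/2]$: the result still has at most one fixed point per element, but it is not minimal. So the correct split is ``global fixed point'' versus ``$\phi\circ h=A(h)\circ\phi$ for a monotone proper $\phi$ and a homomorphism $A$ into $\mathrm{Aff}_+(\mathbb{R})$''.

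In the second case you have two options.
\begin{itemize}
\item Argue as the paper does. The image $A(H_+)$ has no global fixed point, since otherwise its $\phi$-preimage is a point or a compact interval whose endpoints are fixed by all of $H_+$. Hence the image contains a non-trivial translation $A(c)$, and then $c\neq e$ has no fixed point.
\item Keep your commutator argument, after checking that $A(a)\neq\mathrm{id}$ and $\phi(p_a)\neq\phi(p_b)$. If $A(a)=\mathrm{id}$, then $a$ preserves every fibre of $\phi$ and fixes all their endpoints. If $\phi(p_a)=\phi(p_b)$, then $a$ preserves the non-degenerate fibre containing $[p_a,p_b]$ and fixes both its endpoints. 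Either way $a$ would have two fixed points.
\end{itemize}
With that adjustment your argument is complete.
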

The following argument is taken from \cite{circleatinfinity} (see Lemma 2.6), where the above lemma is proved in the setting of Anosov-like actions. Since the same proof applies to Markovian actions, we reproduce it here for completeness.
\begin{proof}[Proof of Lemma \ref{l.periodicleavescarryuniqueperiodicpoint}]
If $f$ is singular, then the previous result follows from Corollary  \ref{c.uniqueperiodicpointinsingular}. We can therefore assume that $f$ is a regular leaf of $\mathcal{F}^s$ or $\mathcal{F}^u$. Parameterize $f$ and consider the action of $\text{Stab}_{\rho}(f)$ on $f$ as an action of $\text{Stab}_{\rho}(f)$ on $\mathbb{R}$. Thanks to Lemma \ref{l.longrectanglesregularleaves} and to Corollary \ref{c.twofixedpointssameleaf}, each non-trivial element of $\mathrm{Stab}_{\rho}(f)$ has a unique fixed point in $f$. By Solodov's theorem (see Theorem 2.2.36 of \cite{Navas}), if the action of $\text{Stab}_{\rho}(f)$ on $\mathbb{R}$ has no global fixed point, then it is semi-conjugate to a subgroup of affine transformations of $\mathbb{R}$. But such a group always contains a translation, which contradicts the existence of a fixed point for every element of $\text{Stab}_{\rho}(f)$. We deduce that $\text{Stab}_{\rho}(f)$ has a global fixed point in $f$, which proves the desired result. 
\end{proof}

We are now ready to prove that $\rho$ satisfies Axiom (A1) of Definition \ref{d.markovianaction}. 

\begin{lemm} \label{l.hyperbolicityforperiodicpoints}
Let $p \in \mathcal{P}$ be a periodic point of $\rho$ and $g\in \text{Stab}_{\rho}(p)-\{e\}$. After possibly replacing $g$ with $g^{-1}$, we have that 
$$\forall x \in \mathcal{F}^s(p), \quad \rho(g^n)(x) \underset{n\rightarrow +\infty}{\longrightarrow} p $$
$$\forall y \in \mathcal{F}^u(p), \quad \rho(g^{-n})(y) \underset{n\rightarrow +\infty}{\longrightarrow} p$$
\end{lemm}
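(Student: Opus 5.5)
Replace $g$ by a power $g^k$ that fixes every $\mathcal{F}^{s,u}$-separatrix of $p$ and preserves all quadrants of $p$; this is harmless, since convergence of $\rho(g^{kn})$ for each residue class combined with continuity gives convergence for $\rho(g^n)$. Choose a quadrant $Q$ of $p$ and a rectangle $R\in\mathcal{R}$ that contains a germ of $p$ in $Q$, so that $p$ is a corner of $R$. The strong finite return time axiom provides such an $R$. Since $\rho(g)$ fixes $p$ and preserves $Q$, $\rho(g)(\inte{R})\cap\inte{R}\neq\emptyset$. Lemma \ref{l.preserverectangleimpliestorsion} gives $\rho(g)(R)\neq R$, and the Markovian intersection axiom with Lemma \ref{l.npredecessor} then shows that $\rho(g)(R)$ is a predecessor or a successor of $R$ of nonzero generation. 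After replacing $g$ by $g^{-1}$, assume it is a predecessor. Then $\rho(g)(R)\cap R$ is a non-trivial vertical subrectangle of $R$ containing the corner $p$. Since both are $\mathcal{F}^s$-leaves of $R$ through $p$ and $R\cap\rho(g)(R)$ is vertical in $R$, the $\mathcal{F}^s$-side of $\rho(g)(R)$ at $p$ is strictly contained in the $\mathcal{F}^s$-side $[p,a]^s$ of $R$.

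Iterating with Remark \ref{r.precedentsuivant}, the rectangle $\rho(g^{n+1})(R)$ is a predecessor of $\rho(g^n)(R)$, and the sequence $(\rho(g^n)(R))_n$ satisfies the hypothesis of the expansivity axiom. Hence $\bigcap_n\rho(g^n)(R)$ is an $\mathcal{F}^u$-leaf of $R$, and since every term contains $p$, it is the $\mathcal{F}^u$-leaf of $p$ in $R$. The side $\rho(g^n)([p,a]^s)$ is therefore a nested sequence of $\mathcal{F}^s$-segments from $p$ that shrinks to $\{p\}$, so $\rho(g^n)(a)\to p$. By monotonicity, every point of $[p,a]^s$ converges to $p$ under $\rho(g^n)$. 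For a general point $x$ on the same $\mathcal{F}^s$-separatrix, I use that $\rho(g^{-1})$ expands. The segments $\rho(g^{-n})([p,a]^s)$ are increasing, and their union is closed in the separatrix and invariant. It is not compact, because otherwise its endpoint would be a second fixed point of $\rho(g)$ on $\mathcal{F}^s(p)$, contradicting Corollary \ref{c.twofixedpointssameleaf}. So the union is the whole separatrix. Every $x$ on it then lies in some $\rho(g^{-m})([p,a]^s)$, and therefore $\rho(g^n)(x)\to p$.

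The main obstacle is obtaining the same contraction direction on every separatrix and reversing it on the unstable leaf, with a single choice of $g$. By the symmetric argument, the horizontal subrectangle $\rho(g^{-1})(R)\cap R$ shows that $\rho(g^{-n})$ contracts the $\mathcal{F}^u$-side of $R$ at $p$, which handles the $\mathcal{F}^u$-separatrix bounding $Q$. To propagate to the adjacent quadrant $Q'$, which shares that $\mathcal{F}^u$-separatrix, I take a rectangle $R'$ at $p$ in $Q'$. Its $\mathcal{F}^u$-side lies on the same separatrix. If $\rho(g)(R')$ were a successor of $R'$, then $\rho(g)$ would contract that $\mathcal{F}^u$-side, contradicting the fact that $\rho(g^{-1})$ already contracts it. So $\rho(g)(R')$ is a predecessor of $R'$, and $\rho(g)$ contracts the $\mathcal{F}^s$-side of $R'$. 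Going around all $2k$ quadrants, which covers the singular case, gives the statement on every separatrix, hence on all of $\mathcal{F}^s(p)$ and $\mathcal{F}^u(p)$.
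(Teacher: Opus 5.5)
Your proof is correct. Its local core is the same as in the paper: you apply the expansivity axiom to the chain $(\rho(g^n)(R))_{n\in\mathbb{N}}$ of predecessors, and you transport the contraction direction to an adjacent quadrant by excluding that $\rho(g)(R')$ is a successor of $R'$.

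Where you genuinely diverge is in passing from the piece of a separatrix lying in $R$ to the whole separatrix. The paper stays inside the Markovian structure. It uses Lemma~\ref{l.periodicleavescarryuniqueperiodicpoint} to identify $p$ with the distinguished point of Lemmas~\ref{l.longrectanglesregularleaves} and~\ref{l.longrectanglessingularleaves}. This gives rectangles $R_x,R_y$ containing $p$, the given point, and a germ of $p$ in $Q$. Remark~\ref{r.precedentsuivant} then shows that some iterate $\rho(g^{-N})(R)$ (resp.\ $\rho(g^{N})(R)$) is a successor of $R_x$ (resp.\ a predecessor of $R_y$), and hence already contains $x$ (resp.\ $y$).

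You instead argue one-dimensionally. The increasing segments $\rho(g^{-n})([p,a]^s)$ cannot stay bounded, since the limit of the monotone sequence $\rho(g^{-n})(a)$ would be a fixed point of $\rho(g)$ on $\mathcal{F}^s(p)$ distinct from $p$, contradicting Corollary~\ref{c.twofixedpointssameleaf}. This needs only that a non-trivial element fixes at most one point per leaf. It therefore avoids the Solodov-based Lemma~\ref{l.periodicleavescarryuniqueperiodicpoint} altogether; the long-rectangle lemmas enter only indirectly, through the proof of Corollary~\ref{c.twofixedpointssameleaf}. The paper's version, in exchange, expresses the convergence directly in terms of rectangles of $\mathcal{R}$ containing the given points.

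There are two harmless imprecisions. First, a rectangle containing a germ of $p$ in $Q$ need not have $p$ as a corner; for example, a regular $p$ may lie in $\inte{R}$. You should therefore work with the $\mathcal{F}^s$- and $\mathcal{F}^u$-leaves of $p$ in $R$ rather than with its sides; the argument is unaffected, since the non-strict inclusion $\rho(g)([p,a]^s)\subseteq[p,a]^s$ is all you use. Second, the union $\bigcup_n\rho(g^{-n})([p,a]^s)$ is not a priori closed in the separatrix. The correct formulation is the supremum argument above.
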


\begin{proof}
Fix $f^s$ and $f^u$ an $\mathcal{F}^s$-separatrix and an $\mathcal{F}^u$-separatrix of $p$ bounding a quadrant $Q$ of $p$ inside $\mathcal{P}$. Consider $x\in f^s$, $y\in f^u$, and $R\in \mathcal{R}$ containing $p$ and a germ of $p$ inside $Q$ (such a rectangle exists, thanks to the strong finite return axiom). Notice that, since $G$ has no torsion, it suffices to prove the lemma for some power of $g$. By the previous fact, after replacing $g$ by a suitable power $g^k$, where $k\in\mathbb{N}$, we may assume that $\rho(g)$ preserves every $\mathcal{F}^s$- and $\mathcal{F}^u$-separatrix of $p$. Moreover, after possibly replacing $g$ by $g^{-1}$, we may further assume that $\rho(g)(R)$ is a predecessor of some generation of $R$ (see Remark \ref{r.precedentsuivant}).

\vspace{0.5cm}
    \textbf{Claim. There exists $N\in\mathbb{N}$ such that $y\in \rho(g^N)(R)$ and $x\in \rho(g^{-N})(R)$}
\begin{proof}[Proof of the Claim]
  
  Thanks to Lemma \ref{l.periodicleavescarryuniqueperiodicpoint}, the point $p$ is the unique periodic point of $\rho$ in both $\mathcal{F}^s(p)$ and $\mathcal{F}^u(p)$. Using this fact, together with Lemmas \ref{l.longrectanglesregularleaves} and \ref{l.longrectanglessingularleaves}, we have that there exist $R_x,R_y\in\mathcal{R}$ such that $p,x\in R_x$, $p,y\in R_y$ and also such that $R_x$ and $R_y$ contain a germ of $p$ inside $Q$ (see Figure \ref{f.contractexpand}).

\begin{figure}[h!]
    \centering
    \includegraphics[scale=0.55]{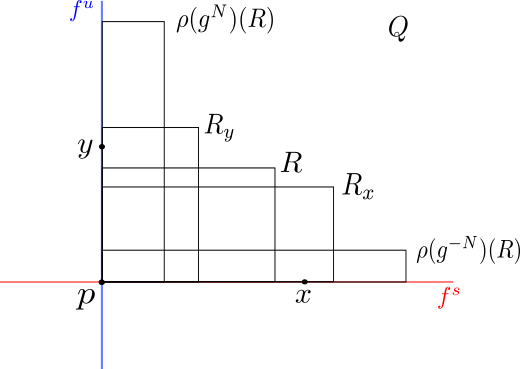}
    \caption{Lemmas~\ref{l.longrectanglesregularleaves} and \ref{l.longrectanglessingularleaves} imply that $\rho(g)$ exhibits hyperbolic behavior near $p$.}
    \label{f.contractexpand}
\end{figure}
  Since $R,R_x,R_y$ all contain a germ of $p$ inside $Q$, $$\inte{R_x}\cap \inte{R}\neq \emptyset \text{ and }\inte{R_y}\cap \inte{R}\neq \emptyset$$ Even more, since  $\rho(g)(p)=p$ and $\rho(g)$ preserves the $\mathcal{F}^s$- and $\mathcal{F}^u$-separatrices of $p$, we have that $\rho(g)(Q)=Q$ and also that 
   \begin{equation}\label{eq.alwaysinter}
      \inte{R_x}\cap \rho(g^n)(\inte{R})\neq \emptyset \text{ and } \inte{R_y}\cap \rho(g^n)(\inte{R})\neq \emptyset
  \end{equation} for every $n\in \mathbb{Z}$. Notice now that for any $n\in \mathbb{N}$, thanks to Remark \ref{r.precedentsuivant}, the rectangle $\rho(g^n)(R)$ (resp. $\rho(g^{-n})(R)$) is a predecessor (resp. successor) of $R$ whose generation increases with $n$. By (\ref{eq.alwaysinter}) and Remark \ref{r.precedentsuivant}, there exists $N\in \mathbb{N}$ such that $\rho(g^N)(R)$ is a predecessor of some generation of $R_y$ and $\rho(g^{-N})(R)$ is a successor of some generation of $R_x$ (see Figure \ref{f.contractexpand}). Thanks to Item (5) of Proposition \ref{p.propertiesoffoliformarkovianactions}, for any such $N$ we have that $y\in \rho(g^N)(R)$ and that $x\in \rho(g^{-N})(R)$, which proves the desired result. 
\end{proof}

Let us now resume the proof of the lemma. Recall that by the expansivity axiom $\underset{n\leq N}{\cap} \rho(g^n)(R)=\mathcal{F}^s(p)\cap \rho(g^N)(R)$ and $\underset{n\geq -N}{\cap} \rho(g^n)(R)=\mathcal{F}^u(p)\cap \rho(g^{-N})(R)$. The previous fact, together with the invariance of $f^s$ and $f^u$ under $\rho(g)$ and the equalities
$\mathcal{F}^s(p)\cap \mathcal{F}^u(y)=\mathcal{F}^u(p)\cap \mathcal{F}^s(x)=\{p\}$,
implies that
  $$\rho(g^{-n})(y) \underset{n\rightarrow +\infty}{\longrightarrow} p \text{  and  } \rho(g^n)(x) \underset{n\rightarrow +\infty}{\longrightarrow} p $$

\noindent which proves the lemma for every point in $f^s\cup f^u$. 

Finally, let $\widetilde{f^s}$ be an $\mathcal{F}^s$-separatrix of $p$ such that $\widetilde{f^s}$ and $f^u$ bound a quadrant of $p$, say $\widetilde{Q}$. Notice that thanks to the Markovian intersection axiom and since $\rho(g)$ acts as a topological expansion on $f^u$, we have that for every $\widetilde{R}\in \mathcal{R}$ containing $p$ and a germ of $p$ inside $\widetilde{Q}$, the rectangle $\rho(g)(\widetilde{R})$ is a predecessor of some generation of $\widetilde{R}$. Using our previous arguments, one may verify that $\rho(g)$ acts on $\widetilde{f^s}$ as a topological contraction. We obtain the desired result by a finite iteration of this process. 

\end{proof}
We conclude this section by proving the following generalization of  Lemmas~\ref{l.longrectanglesregularleaves} and~\ref{l.longrectanglessingularleaves} for non-periodic regular leaves of $\mathcal{F}^{s,u}$. 

\begin{lemm}\label{l.longrectanglesregularleavesnoperiodicpoints}
    Let $f$ be a regular leaf in $\mathcal{F}^s$ or $\mathcal{F}^u$ that does not contain a periodic point and $D$ be the closure of a connected component of $\mathcal{P}-f$. We have that for any $x,y\in f$ there exists $R\in \mathcal{R}$ containing  $x,y$ and such that $\inte{R}\cap D\neq \emptyset$. 
\end{lemm}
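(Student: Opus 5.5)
We rerun the construction from the proof of Lemma \ref{l.longrectanglesregularleaves}, now for a leaf $f$ carrying no periodic point, and show that only the case $J=f$ can occur. Fix $x\in f$ and a rectangle $R_0\in\mathcal{R}$ containing $x$ with $\inte{R_0}\cap D\neq\emptyset$ and meeting both separatrices of $x$ non-trivially; such a rectangle exists by the strong finite return time axiom. Let $R_0,R_1,R_2,\dots$ be the unique sequence of successors containing $x$ whose interiors meet $D$. As before, set $I^s_i:=R_i\cap f$, which is increasing, and $J:=\bigcup_i I^s_i$.

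The arguments of Cases 1 and 2 of that proof never used periodicity of $f$: they relied only on the expansivity and Markovian intersection axioms. They therefore still show that $J$ is closed and non-compact. This leaves two possibilities: either $J=f$, or $J$ is a closed half-line with a single endpoint $p$.

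The main step is to exclude the half-line case, and the key observation is that it makes $f$ a boundary leaf. As in Case 4, there is $i_0$ with $p\in\partial^uR_i$ for all $i\geq i_0$. Hence the $\mathcal{F}^u$-boundary component of $R_{i_0}$ through $p$ lies in $\mathcal{F}^u(p)$, so $\mathcal{F}^u(p)$ is a boundary leaf for $\mathcal{R}$. By Lemma \ref{l.boundaryleavesareperiodic} it is periodic, and by Lemma \ref{l.periodicleavescarryuniqueperiodicpoint} it contains a unique periodic point $q$.

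We then need $q=p$, which would contradict the hypothesis that $f$ has no periodic point. To show this, consider the rectangles $R_i$ for $i\geq i_0$. They all have an $\mathcal{F}^u$-boundary component on $\mathcal{F}^u(p)$ containing $p$, and they become arbitrarily thin in the $\mathcal{F}^s$-direction. By the finiteness axiom, after passing to a subsequence they lie in a single $\rho$-orbit, say $R_i=\rho(g_i)(R_{i_0})$, with $\rho(g_i)$ mapping the distinguished $\mathcal{F}^u$-boundary component of $R_{i_0}$ into $\mathcal{F}^u(p)$, exactly as in the proof of Lemma \ref{l.boundaryleavesareperiodic}. Then $h_i:=g_ig_{i_0}^{-1}$ or a similar product stabilizes $\mathcal{F}^u(p)$, and so fixes $q$. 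By Lemma \ref{l.hyperbolicityforperiodicpoints} these elements act on $\mathcal{F}^u(p)$ as contractions or expansions towards $q$. The boundary components containing $p$ are nested images under such maps, and the corresponding sides of $R_i$ through $p$ should be pushed towards $q$, hence converge to $q$. Since $p$ is common to all of them, this forces $p=q$.

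Making this last step rigorous is the main obstacle. I expect the cleanest route is to compare $p$ directly with the separatrix structure from Lemma \ref{l.longrectanglesregularleaves} applied to $\mathcal{F}^u(p)$. If $p\neq q$, then $p$ and nearby points of $\mathcal{F}^u(p)$ on the same separatrix of $q$ lie in a common long rectangle. Following the argument of Case 1, such a rectangle would cross the thin rectangles $R_i$ non-Markovianly across $p$. Once the half-line case is excluded we have $J=f$, so any $y,y'\in f$ lie in some $R_i$ with $\inte{R_i}\cap D\neq\emptyset$, which proves the lemma.
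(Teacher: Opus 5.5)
\textbf{There is a genuine gap at the key step.} Your reduction is right. Cases 1 and 2 carry over verbatim, so everything hinges on excluding the half-line case, i.e. on showing that the endpoint $p$ of $J$ equals the periodic point $q$ of $\mathcal{F}^u(p)$. That step is not proved.

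The sentence ``the sides of $R_i$ through $p$ are pushed towards $q$, hence converge to $q$'' is not an argument. The elements involved are all different. Lemma \ref{l.hyperbolicityforperiodicpoints} only says how each of them moves points relative to $q$, and you never locate the segments $R_i\cap\mathcal{F}^u(p)$ relative to $q$.

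The missing idea is an elementary one-dimensional fixed-point argument, which is what the paper uses:
\begin{itemize}
\item Pass to a subsequence in a single $\rho$-orbit, with $\rho(g_i)(R_i)=R_{i+1}$ and $\rho(g_i)(R_i\cap\mathcal{F}^u(p))=R_{i+1}\cap\mathcal{F}^u(p)$.
\item The Markovian intersection axiom gives $R_{i+1}\cap\mathcal{F}^u(p)\subsetneq R_i\cap\mathcal{F}^u(p)$.
\item A homeomorphism of a line sending a compact interval into itself has a fixed point in the image. So $\rho(g_i)$ fixes a point of $R_{i+1}\cap\mathcal{F}^u(p)$.
\item Since $g_i\neq e$, this point is the unique periodic point $q$ of $\mathcal{F}^u(p)$ (Lemma \ref{l.periodicleavescarryuniqueperiodicpoint}).
\item Hence $q\in\bigcap_{i\geq i_0}R_i\cap\mathcal{F}^u(p)$. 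By the expansivity axiom this set is $\{p\}$, so $p\in f$ is periodic, a contradiction.
\end{itemize}
This argument needs neither Lemma \ref{l.hyperbolicityforperiodicpoints} nor the boundary-leaf observation, since the $g_i$ themselves stabilize $\mathcal{F}^u(p)$.

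\textbf{Your fallback route does not work either.} Lemma \ref{l.longrectanglesregularleaves}, applied to $\mathcal{F}^u(p)$, only gives a rectangle $r$ that contains a segment of $\mathcal{F}^u(p)$ around $p$ and whose interior meets a \emph{prescribed side} of $\mathcal{F}^u(p)$. Nothing forces $r$ to cross $\mathcal{F}^u(p)$, and in fact it cannot. A rectangle containing $p$ in its interior really would meet the thin $R_i$ non-Markovianly, so every such $r$ has $\mathcal{F}^u(p)$ as an $\mathcal{F}^u$-boundary component. Then:
\begin{itemize}
\item If $r$ lies on the side of the $R_i$, then for large $i$ the set $r\cap R_i$ is a vertical subrectangle of $R_i$ and a horizontal subrectangle of $r$. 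So $r$ is just a predecessor of some generation of $R_i$, which is a perfectly Markovian configuration.
\item If $r$ lies on the other side, the interiors of $r$ and $R_i$ are disjoint.
\end{itemize}
The Case 1 argument needs a rectangle containing a point of $f$ outside $J$ that the thin $R_i$ miss. Here every $R_i$ with $i\geq i_0$ contains $p$, and no rectangle reaches past it. The contradiction therefore has to come from the dynamics of the $g_i$ as above, not from the Markovian intersection axiom alone.
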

The proof of this lemma is an adaptation of our proof of Lemma \ref{l.longrectanglesregularleaves}.
\begin{proof}
   Fix $x,y \in f$ and denote by $f_+$ the $\mathcal{F}^s$-separatrix of $x$ containing $y$. Take $R_0\in\mathcal{R}$ such that $R_0$ contains $x$, intersects $f_+$ non-trivially and satisfies $\inte{R_0}\cap D\neq \emptyset$ (such a rectangle exists, thanks to the strong finite return time axiom). By a repeated application of strong finite return axiom, there exists  $R_0,R_1,R_2...$ a sequence of rectangles in $\mathcal{R}$ such that for every $i\in \mathbb{N}$ the rectangle $R_{i+1}$ is a successor of some generation of $R_i$, contains $x$ and satisfies $\inte{R_i}\cap D\neq \emptyset$.  
    
    By construction, each $R_i$ intersects $f_+$ non-trivially.  Let $I_i^s:=R_i\cap f$. By Item (5) of Proposition \ref{p.propertiesoffoliformarkovianactions}, $I^s_i$ is an $\mathcal{F}^s$-leaf of $R_i$. Moreover, thanks to the Markovian intersection axiom, we have that $I^s_i\subseteq I^s_{i+1}$. Consider $J:=\underset{i\in\mathbb{N}}{\cup}I^s_i\cap f_+$. 
    
    \vspace{0.5cm}
    \textbf{Case 1: $J\neq f_+$ and $J$ is not closed in $f_+$}  
    
    Assume that there exists a first element $t\in f_+$ that does not belong in $J$; hence $J=[x,t)^s$. Consider $r\in \mathcal{R}$ containing $t$, intersecting non-trivially $J$ and satisfying $\inte{r}\cap D\neq \emptyset$ (such a rectangle exists thanks to the strong finite return time axiom). By the definition of $J$ and the expansivity axiom, when $i$ is sufficiently big, the rectangle $R_i$ is very thin in the $\mathcal{F}^u$ direction,  intersects the interior of $r$ and does not contains $t$ (see Figure \ref{f.rectcontainstwopointscase1}). We deduce that for $i$ sufficiently big the intersection between $R_i$ and $r$ can not be Markovian, which is absurd. 

    \vspace{0.5cm}
    \textbf{Case 2: $J\neq f_+$ and $J$ is a closed subset of $f_+$}

    Denote by $p\in f_+$ the extremity of $J$ which is not $x$. In this case, we have that there exists $i_0\in\mathbb{N}$ such that $p\in \partial^u R_i$ for every $i\geq i_0$. Thanks to the finiteness axiom, after possibly passing to a subsequence, assume  that all the $R_i$ are contained in the same $\rho$-orbit of rectangles. Take $g_i\in G$ such that $\rho(g_i)(R_i)=R_{i+1}$. Passing to a further subsequence if necessary, we can assume that $\rho(g_i)(R_{i}\cap \mathcal{F}^u(p))=R_{i+1}\cap \mathcal{F}^u(p)$ for every $i\geq i_0$. 
    
    Thanks to the Markovian intersection axiom, since $R_{i+1}$ is a successor of some generation of $R_i$, $$\rho(g_i)(R_{i}\cap \mathcal{F}^u(p))=R_{i+1}\cap \mathcal{F}^u(p)\subsetneq R_{i}\cap \mathcal{F}^u(p)$$  for every $i\geq i_0$; hence, $\rho(g_i)$ has a fixed point inside $R_{i+1}\cap \mathcal{F}^u(p)\subset \mathcal{F}^u(p)$  for every $i\geq i_0$. Recall now that by Lemma \ref{l.periodicleavescarryuniqueperiodicpoint}, there is a unique periodic point inside $\mathcal{F}^u(p)$. Therefore, the $\rho(g_i)$ with $i\geq i_0$ have a common fixed point in $\mathcal{F}^u(p)$, which lies inside $\underset{i\geq i_0}{\cap}R_{i}\cap \mathcal{F}^u(p)$. By the expansivity axiom, the previous set is equal to $\{p\}$ and thus $p\in f$ is a periodic point. This is impossible, since $f$ by hypothesis does not contain periodic points. 
    
    We conclude that $J=f_+$, which proves the existence of a rectangle $R\in \mathcal{R}$ containing $x,y$ and such that $\inte{R}\cap D\neq \emptyset$.    
\end{proof}

\subsection{On the non-separated leaves in $\mathcal{F}^s$ and $\mathcal{F}^u$}

In this section, we prove that $\rho$ satisfies Axiom (A4) of Definition \ref{d.anosovlike}. This result will be deduced from the following lemma.

\medskip

\noindent\textbf{Notation convention.} Consider $x\in \mathcal{P}$ and $y\in \mathcal{F}^{s}$. We denote by $[x,y]^{s}$ the $\mathcal{F}^{s}$-segment in $\mathcal{F}^{s}(x)$ between $x$ and $y$. Moreover, we denote by $(x,y]^{s}$ (resp. $[x,y)^{s}$) the same segment with the endpoint $x$ (resp. $y$) removed. We employ similar notation for $\mathcal{F}^u$-segments in $\mathcal{P}$. By abuse of notation, we will sometimes regard these segments as oriented from $x$ to $y$.

\begin{lemm}\label{l.nonseparationlemma}
   Take $l$ and $l'$ two non-separated leaves in $\mathcal{F}^s$. There exist $x\in l$, $x'\in l'$, $[x,x_0]^u$ an $ \mathcal{F}^u$-segment inside $\mathcal{F}^u(x)$,  $[x',x'_0]^u$ an $ \mathcal{F}^u$-segment inside $\mathcal{F}^u(x')$ and a homeomorphism $h: (x,x_0]^u\rightarrow (x',x'_0]^u$ such that 
   
   \begin{enumerate}
       \item for any $y\in (x,x_0]^u$ we have that $$\mathcal{F}^s(y)\cap (x',x'_0]^u= \{h(y)\}  $$ 
       \item $l\cap [x',x'_0]^u=\emptyset$ and $l'\cap [x,x_0]^u=\emptyset$
       \item the set $\underset{y\in (x,x_0]^u}\bigcup [y,h(y)]^s$ does not contain any singular points of $\mathcal{F}^s$ or $\mathcal{F}^u$ and is trivially bifoliated by $\mathcal{F}^s$ and $\mathcal{F}^u$
       \item if $U$ (resp. $U'$) is the connected component of $\mathcal{P}-l$ (resp. $\mathcal{P}-l'$) containing $l'$ (resp. $l$), then for every  $y\in U\cap U'$ we have that  $$\mathcal{F}^u(y)\cap l\neq \emptyset \implies \mathcal{F}^u(y)\cap l'= \emptyset$$
   \end{enumerate}
\end{lemm}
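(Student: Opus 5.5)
The plan is to work directly in the leaf space of $\mathcal{F}^s$ and extract items (1)--(3) from the definition of non-separation. Item (4) is the only one with real content, and for it I would use the strong Markovian structure, in particular Lemma \ref{l.longrectanglesregularleaves} and Lemma \ref{l.longrectanglesregularleavesnoperiodicpoints}.

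\textbf{Items (1)--(3).} Since $l$ and $l'$ are non-separated, there is a sequence of leaves $l_n\in\mathcal{F}^s$ converging to both $l$ and $l'$. Choose $x\in l$ and $x'\in l'$, both regular, and small $\mathcal{F}^u$-segments through $x$ and $x'$. Every $l_n$ with $n$ large meets the segment through $x$ and the segment through $x'$, and all these $l_n$ lie on one side of each segment.

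Pick $n_0$ large and set $x_0:=l_{n_0}\cap\mathcal{F}^u(x)$ and $x'_0:=l_{n_0}\cap\mathcal{F}^u(x')$. Then define $h(y):=\mathcal{F}^s(y)\cap(x',x'_0]^u$.

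To see that $h$ is well defined and is a homeomorphism, note that the set of leaves meeting $(x,x_0]^u$ is an interval in the leaf space, and the same holds for $(x',x'_0]^u$. Both intervals are bounded by $l_{n_0}$ on one end and accumulate on $l$, respectively $l'$, on the other. By choosing $n_0$ large and shrinking the segments, the two intervals can be made to coincide: a leaf that crosses one segment but not the other would produce a leaf, strictly between $l_{n_0}$ and $l$, which separates $l$ from $l'$.

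Item (2) holds because leaves intersect at most once, and $l$ and $l'$ do not cross the opposite segment, by non-separation.

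For item (3), the region $\bigcup_{y}[y,h(y)]^s$ is swept out by disjoint $\mathcal{F}^s$-segments whose endpoints vary continuously. The singular points are discrete, and any singular point inside this region would have to lie on one of the segments $[y,h(y)]^s$. Removing the finitely many such segments by shrinking the interval, possibly after moving $x$ closer to $l$, I expect to obtain a trivially bifoliated strip. Properness of leaves together with Proposition \ref{p.propertiesoffoliformarkovianactions} then gives the trivial bifoliation.

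\textbf{Item (4).} This is the main obstacle. Suppose that some $y\in U\cap U'$ has $\mathcal{F}^u(y)$ meeting both $l$ and $l'$. Then the $\mathcal{F}^u$-segment from $l$ to $l'$ through $y$, together with the strip from item (3), bounds a region. By Jordan curve arguments and the fact that leaves are properly embedded, this region would force $l$ and $l'$ to be separated in the leaf space. Indeed, the leaves $l_n$ would have to cross $\mathcal{F}^u(y)$ between $l$ and $l'$, and would then converge to a leaf through a point of $[\,l\cap\mathcal{F}^u(y),\,l'\cap\mathcal{F}^u(y)\,]^u$, contradicting the assumption that they converge to $l$ and $l'$.

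If this purely topological argument turns out to be incomplete, my fallback is a rectangle argument. A segment crossing both $l$ and $l'$ would allow one to build a rectangle of $\mathcal{R}$ containing points of $l$ and of $l'$, using Lemma \ref{l.longrectanglesregularleavesnoperiodicpoints} or Lemma \ref{l.longrectanglesregularleaves} along $\mathcal{F}^u(y)$. A trivially bifoliated rectangle containing points of both $l$ and $l'$ contradicts non-separation via the chain of leaves $l_n$.
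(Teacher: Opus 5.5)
Your treatment of items (1) and (2) follows the paper's route. You take a sequence of $\mathcal{F}^s$-leaves $l_n$ accumulating on both $l$ and $l'$, and define $h$ by following $\mathcal{F}^s$-leaves from one transversal to the other. (When $l$ or $l'$ is singular, $x$ and $x'$ must lie on the faces the $l_n$ accumulate on; the paper takes the singular point.) Your argument that the two leaf intervals coincide is a valid substitute for the paper's observation that each quadrilateral $\gamma_k$ bounds a rectangle: a leaf crossing one transversal but not the other would separate $l$ from $l'$.

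For item (3), however, the mechanism you propose does not work. The strip $\bigcup_{y}[y,h(y)]^s$ is not relatively compact, since it accumulates on half-leaves of $l$ and $l'$. So discreteness of the singular set gives no finiteness, and shrinking $(x,x_0]^u$ cannot discard singular segments accumulating at $x$. What is true, and what the paper uses, is that there are no singular points at all. Every point of the strip off the two transversals lies in the interior of a disk bounded by two $\mathcal{F}^s$-segments of the strip and two subsegments of $[x,x_0]^u$ and $[x',x'_0]^u$. Such a disk is a rectangle: a $p$-prong singularity inside, with $p\geq 3$, would force two prongs of its $\mathcal{F}^s$-leaf out through the same $\mathcal{F}^u$-side, contradicting Item (4) of Proposition~\ref{p.propertiesoffoliformarkovianactions}. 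Choosing the transversals and the leaf $l_{n_0}$ regular then gives (3).

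The genuine gap is in item (4), which is actually the easiest item. Put $a=\mathcal{F}^u(y)\cap l$ and $a'=\mathcal{F}^u(y)\cap l'$. Your concluding step says the $l_n$ would converge to a leaf through a point of $[a,a']^u$, but this yields no contradiction. The crossing points of $l_n$ with $\mathcal{F}^u(y)$ that you obtain lie near $a$ (or near $a'$), so the limit leaf is just $l$ (or $l'$). Converging to several leaves at once is exactly what happens in a non-Hausdorff leaf space.

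The missing observation is that both crossings happen for the same $l_n$. Since $l_n\subset U\cap U'$ accumulates on $l$ near $a$ and on $l'$ near $a'$, the local product structure forces $l_n$ to meet $\mathcal{F}^u(y)$ once near $a$ and once near $a'$. That is two intersections, contradicting Item (4) of Proposition~\ref{p.propertiesoffoliformarkovianactions}; this is the paper's one-line argument.

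Your fallback via $\mathcal{R}$ is unnecessary, since the statement is purely topological, and it does not work as stated. By Lemma~\ref{l.longrectanglesregularleaves}, when $\mathcal{F}^u(y)$ is periodic, a rectangle containing $a$ and $a'$ need not exist if they lie on different separatrices of its periodic point. Even when such a rectangle exists, the contradiction still has to come from $l_n$ crossing $\mathcal{F}^u(y)$ twice.
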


\begin{proof}
    Consider $l$ and $l'$ two non-separated leaves in $\mathcal{F}^s$. Fix $x\in l$ and $x'\in l$ such that both $\mathcal{F}^u(x)$ and $\mathcal{F}^u(x')$ are regular leaves in $\mathcal{F}^u$. If $l$ (resp. $l'$) is singular, we choose $x$ (resp. $x'$) to be the unique singular point in $l$ (resp. $l'$). Since $l$ and $l'$ are non-separated, there exist two connected components
$U \subset \mathcal{P} - l \text{ and } U' \subset \mathcal{P} - l'$ and two sequences of points in $\mathcal{P}$, say  $(x_n)_{n\in\mathbb{N}}$ and $(x_n')_{n\in\mathbb{N}}$,  
such that: 
    \begin{itemize}
        \item $x_0\in \mathcal{F}^u(x)\cap U$ and $x'_0\in \mathcal{F}^u(x')\cap U'$

        \vspace{0.1cm}
        \item $x_k\in [x,x_0]^u$, $x_k'\in [x',x'_0]^u$ for every $k\in\mathbb{N}$
        \vspace{0.1cm}
         \item $\mathcal{F}^s(x_k)$ is a regular leaf of $\mathcal{F}^s$ such that $\mathcal{F}^s(x_k)\cap [x',x'_0]^u=\{x_k'\}$ for every $k\in\mathbb{N}$ 
         \vspace{0.1cm}
        \item $x_n\underset{n\rightarrow +\infty}{\longrightarrow}x$, $x'_n\underset{n\rightarrow +\infty}{\longrightarrow}x'$
        \vspace{0.1cm}
       \item $(x_n)_{n\in\mathbb{N}}$ and $(x_n')_{n\in\mathbb{N}}$ are monotonous. In other words, $x_{k+1}\in [x,x_k]^u$, $x'_{k+1}\in [x',x'_k]^u$ for every $k\in\mathbb{N}$
    \end{itemize}
      We remark that in the last of the above properties  $\mathcal{F}^s(x_k)\cap [x',x'_0]^u$ consists indeed of a single point, thanks to Proposition \ref{p.propertiesoffoliformarkovianactions}.
      
     For every $k\in \mathbb{N}$ consider $\gamma_k$ the closed curve obtained by the juxtaposition of $[x_k,x_k']^s$ of followed by $[x_k',x_{k+1}']^u$, followed by $[x_{k+1}',x_{k+1}]^s$ and finally followed by $[x_{k+1},x_k]^u$. By Item (4) of Proposition \ref{p.propertiesoffoliformarkovianactions}, the curve $\gamma_k$ is simple; hence, it  bounds a disk in $\mathcal{P}$. Even more, since $\gamma_k$ consists of two $\mathcal{F}^s$-segments and two $\mathcal{F}^u$-segments, by Item (2) of Proposition \ref{p.propertiesoffoliformarkovianactions}, we get that every leaf in $\mathcal{F}^s$ (resp. $\mathcal{F}^u$) intersecting one $\mathcal{F}^u$ (resp. $\mathcal{F}^s$) segment of $\gamma_k$ must also intersect the other. It follows that the disk bounded by $\gamma_k$ is trivially bifoliated by $\mathcal{F}^s$ and $\mathcal{F}^u$ and thus is a rectangle in $\mathcal{P}$. 
     
     Using the fact that all the $\gamma_k$ bound rectangles, one easily deduces Item (1) of the above lemma. Item (3) follows by the same argument, together with the facts that $\mathcal{F}^u(x)$, $\mathcal{F}^u(x')$, and all the $\mathcal{F}^s(x_k)$ are regular leaves of $\mathcal{F}^{s,u}$, and that no rectangle in $\mathcal{P}$ can contain singularities of $\mathcal{F}^s$ or $\mathcal{F}^u$ in its interior. Concerning Item (2), assume that $l\cap [x',x'_0]^u\neq \emptyset$. Since $l\neq l'$, we have that $l$ intersects $(x',x'_0]^u$. By our previous arguments, this implies that $l$ also intersects $(x,x_0]^u$, which contradicts Item (4) of Proposition \ref{p.propertiesoffoliformarkovianactions}. 

     Finally, take $y\in U\cap U'$ and assume by contradiction that $\mathcal{F}^u(y)\cap l\neq \emptyset$ and that $\mathcal{F}^u(y)\cap l'\neq \emptyset$. The previous facts imply that for any $z\in (x,x_0]^u$ that is sufficiently close to $x$, we have that $\mathcal{F}^s(z)$ intersects twice $\mathcal{F}^u(y)$, which contradicts Item (4) of Proposition \ref{p.propertiesoffoliformarkovianactions}.
\end{proof}
\begin{lemm}\label{l.nonseparatedimpliesfixed}
     Take $l,l'\in \mathcal{F}^s$. If $l$ and $l'$ are not separated, then there exists $g\in G-\{e\}$ such that $\rho(g)(l)=l$.
\end{lemm}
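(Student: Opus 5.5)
We will show that $l$ is a boundary leaf for $\mathcal{R}$; Lemma \ref{l.boundaryleavesareperiodic} then gives the conclusion.

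First we apply Lemma \ref{l.nonseparationlemma}. This gives points $x\in l$, $x'\in l'$, $\mathcal{F}^u$-segments $[x,x_0]^u$ and $[x',x'_0]^u$, and the homeomorphism $h$. Set
\[
B:=\bigcup_{y\in (x,x_0]^u}[y,h(y)]^s,
\]
which is a trivially bifoliated band whose $\mathcal{F}^s$-leaves accumulate on both $l$ and $l'$ as $y\to x$.

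Next we look at the quadrant of $x$ containing the germ of $B$ near $x$. By the strong finite return time axiom, there is $R\in\mathcal{R}$ containing $x$ and a germ of $x$ in that quadrant. We then ask whether $x$ lies in the interior of an $\mathcal{F}^s$-leaf of $R$, or on $\partial^sR$.

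Suppose $R$ does not have an $\mathcal{F}^s$-boundary component contained in $l$. Then $R$ contains points on both sides of $l$ near $x$, in the $U$ side. Every such point whose $\mathcal{F}^s$-leaf crosses $R$ near $l$ comes from an $\mathcal{F}^s$-leaf $\mathcal{F}^s(y)$ with $y$ close to $x$. Because $R$ is trivially bifoliated, these leaves also cross the $\mathcal{F}^u$-leaf of $x$ in $R$, and this is where we must be careful. The cleaner route is to use the fact that $R$ is a rectangle containing $x$ in its interior, or on $\partial^uR$ but not on $\partial^sR$. Then a small $\mathcal{F}^s$-leaf of $R$ through $x$ has a neighborhood in $R$ foliated by leaves $\mathcal{F}^s(y)$ with $y\in(x,x_0]^u$. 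Any limit of these leaves in $R$ lies in $l$, so the non-separated limit $l'$ never enters $R$. This is consistent, so no contradiction arises yet.

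The real argument should instead use the expansivity axiom. Take a sequence of successors $R=R_0,R_1,\dots$ each containing a germ of $x$ in the quadrant towards $B$. The intersection $\bigcap R_i$ is the $\mathcal{F}^s$-leaf of $x$ in $R_0$. By the argument of Lemma \ref{l.longrectanglesregularleaves} (Cases 1, 2 and 4) and Lemma \ref{l.longrectanglesregularleavesnoperiodicpoints}, the union $J$ of the traces $R_i\cap l$ grows to either all of $l\cap\partial U$ or a closed half-leaf ending at a periodic point.

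We then consider the rectangles $R_i$ for $i$ large. They are thin in the $\mathcal{F}^u$-direction and contain long stretches of $l$, so they contain segments $[y,z]^s\subset B$ with $y$ close to $x$. Running the same construction from the $l'$ side and using item (4) of Lemma \ref{l.nonseparationlemma}, the $\mathcal{F}^s$-extent of rectangles around $x$ inside $U$ must stop before reaching the gap between $l$ and $l'$. Applying the Markovian intersection axiom to a rectangle around $x$ and a rectangle around $x'$, both meeting $B$, forces one of them to have an $\mathcal{F}^s$-boundary on $l$ or $l'$. Otherwise their intersection would be neither vertical nor horizontal: they overlap inside $B$, but neither can cross the other past $l$ or $l'$. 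This places a connected component of $\partial^sR$ on $l$ or $l'$.

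If the boundary component lands on $l'$, we get that $l'$ is periodic. Two elements fixing $l'$ and $\ldots$ then need to be transferred to $l$. For this transfer we plan to use Lemma \ref{l.hyperbolicityforperiodicpoints}: the element $g$ fixing $l'$ contracts or expands along $\mathcal{F}^u(x')$, and conjugating the band $B$ shows that $\rho(g)$ preserves the set of leaves non-separated from $l'$ on that side. We also need finiteness of such leaves near $l'$, which we would derive from the finiteness axiom. Then a power of $g$ fixes $l$.

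The main obstacle is exactly this Markovian-intersection step: showing that rectangles near the non-separated pair must have an $\mathcal{F}^s$-boundary on $l$ or $l'$, together with the subsequent transfer of periodicity from $l'$ to $l$.
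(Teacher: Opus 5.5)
The proposal has a genuine gap, and it is exactly the step you flag. A rectangle $R\in\mathcal{R}$ containing $x$ and a rectangle $R'\in\mathcal{R}$ containing $x'$ can never meet, so the Markovian intersection axiom has nothing to act on.

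Indeed, by Item (5) of Proposition~\ref{p.propertiesoffoliformarkovianactions}, $l\cap R$ is an $\mathcal{F}^s$-leaf of $R$ and $l'\cap R'$ is an $\mathcal{F}^s$-leaf of $R'$. Hence for any $z\in R\cap R'$ the leaf $\mathcal{F}^u(z)$ meets both $l$ and $l'$. The open $\mathcal{F}^u$-segment between these two intersection points lies in $U\cap U'$, which contradicts Item (4) of Lemma~\ref{l.nonseparationlemma}.

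A model makes this concrete. Take $\mathbb{R}^2\setminus(\{0\}\times(-\infty,0])$ with $l=(-\infty,0)\times\{0\}$ and $l'=(0,\infty)\times\{0\}$. Rectangles around $x$ lie in $\{(t,s):t<0\}$ and rectangles around $x'$ lie in $\{(t,s):t>0\}$. Rectangles crossing the gap only need to be thin successors of the ones along $l$ and $l'$, and nothing forces an $\mathcal{F}^s$-boundary on $l$ or $l'$.

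So the reduction to ``$l$ is a boundary leaf'' is unsupported. Periodic leaves need not be boundary leaves, and your argument never produces one. The transfer step has a second gap. Finiteness of the leaves non-separated from $l'$ does not follow from the finiteness axiom in any evident way. In this paper the simultaneous-stabilizer statement (Lemma~\ref{l.fixatthesametimenonsepleaves}) is only available after the present lemma, via (A1)--(A4).

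The missing idea is to work in the transverse direction.
\begin{enumerate}
\item Along $[x_0,x_0']^s$, let $p_0$ be the first point whose $\mathcal{F}^u$-leaf misses $l$. One shows that the $\mathcal{F}^u$-separatrix $\mathcal{F}^u_-(p_0)$ entering the band stays in the band forever, i.e.\ it escapes through the gap between $l$ and $l'$.
\item The leaf $\mathcal{F}^u(p_0)$ is the one the Markovian structure forces to be periodic. Either it is a boundary leaf, or one takes rectangles $R_k\ni p_0$ with $R_{k+1}\cap R_k$ a vertical subrectangle of $R_k$, all in one $\rho$-orbit with matching orientations. On the $\mathcal{F}^s$-boundary component of $R_k$ inside the band, $\mathcal{F}^u(p_0)$ is characterized as the place where $\mathcal{F}^u$-leaves stop meeting $l$. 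So the element sending $R_0$ to $R_1$ must preserve $\mathcal{F}^u(p_0)$.
\item Lemma~\ref{l.hyperbolicityforperiodicpoints} then gives $g$ with $\rho(g)(\mathcal{F}^u_-(p_0))\subseteq\mathcal{F}^u_-(p_0)$. This $\rho(g)$ maps band leaves into band leaves, so $\rho(g)(l)$ is either $l$ or non-separated from $l$.
\item The second option is excluded by Item (4) of Lemma~\ref{l.nonseparationlemma}, applied to $\mathcal{F}^u(\rho(g)(z))$ for $z\in[x_0,p_0)^s$ close to $p_0$.
\end{enumerate}
This route needs neither a boundary leaf on $l$ nor any finiteness of non-separated leaves.
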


The above lemma was originally proved in \cite{Fe} for the natural action of $\pi_1(M)$ on the bifoliated plane of an Anosov flow $(M^3,\Phi)$. Our proof below follows the general strategy of the argument in \cite{Fe}.
\begin{proof}[Proof of Lemma \ref{l.nonseparatedimpliesfixed}]
    Consider $l$ and $l'$ two non-separated leaves in $\mathcal{F}^s$. Thanks to Lemma \ref{l.nonseparationlemma}, there exist $x\in l$, $x'\in l'$, $[x,x_0]^u$ a segment inside $\mathcal{F}^u(x)$,  $[x',x'_0]^u$ a segment inside $\mathcal{F}^u(x')$ and a homeomorphism $h: (x,x_0]^u\rightarrow (x',x'_0]^u$ such that for any $z\in (x,x_0]^u$ $$\mathcal{F}^s(z)\cap (x',x'_0]^u= \{h(z)\}  $$ 

Denote by $\mathfrak{R}$ the set  $\underset{y\in (x,x_0]^u}\bigcup [y,h(y)]^s$ and by $\overline{\mathfrak{R}}$ the closure of $\mathfrak{R}$ in $\mathcal{P}$. By Lemma \ref{l.nonseparationlemma}, we may assume that $\mathfrak{R}$ contains no singularities of $\mathcal{F}^{s,u}$ and that it is trivially bifoliated by $\mathcal{F}^s$ and $\mathcal{F}^u$.

\vspace{0.5cm}
\textbf{Claim 1.} There exists a unique point $p_0\in [x_0,x_0']^s$ such that for any point $z\in [x_0,x_0']^s$ $$\mathcal{F}^u(z)\cap l\neq \emptyset \Longleftrightarrow z\in [x_0,p_0)^s$$
Moreover, for any $ z\in [x_0,p_0)^s$ there exists an $\mathcal{F}^u$-segment in $\overline{\mathfrak{R}}$ going from $z$ to some point in $l$. 
\vspace{0.2cm}
\begin{proof}[Proof of Claim 1]

Indeed, suppose that there exists $z\in [x_0,x_0']^s$, whose $\mathcal{F}^u$-leaf intersects $l$, but for which there does not exist an $\mathcal{F}^u$-segment in $\overline{\mathfrak{R}}$ going from $z$ to some point in $l$. In that case, thanks to the fact that $\mathfrak{R}$ is trivially bifoliated, there exists a leaf in $\mathcal{F}^s$ that intersects $\mathcal{F}^u(z)$ more than once, which contradicts Item (4) of Proposition \ref{p.propertiesoffoliformarkovianactions} (see Figure \ref{f.nonseparated1}). We deduce that for any point $z\in [x_0,x_0']^s$ we have that $\mathcal{F}^u(z)\cap l \neq \emptyset$ if and only if there exists an $\mathcal{F}^u$-segment in $\overline{\mathfrak{R}}$ going from $z$ to $l$. 

  \begin{figure}[h!]
    \centering
    \includegraphics[width=0.5\linewidth]{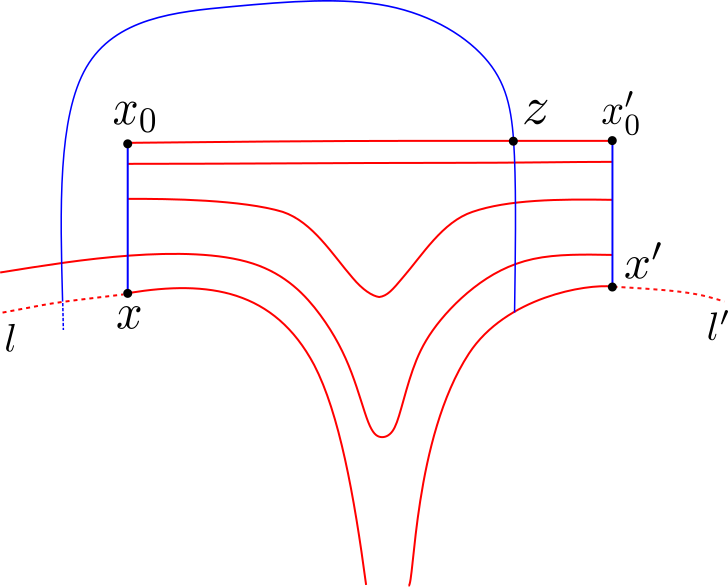}
    \caption{The $\mathcal{F}^u$-leaf of $z$ can not intersect both $l$ and $l'$.}
    \label{f.nonseparated1}
\end{figure}

Next, since $\mathfrak{R}$ contains no singularities of $\mathcal{F}^{s,u}$ and the foliations $\mathcal{F}^s$ and $\mathcal{F}^u$ are transverse, the set of points in $[x_0,x_0']^s$ whose $\mathcal{F}^u$-leaf intersects $l$ is an open subset of $[x_0,x_0']^s$. 
Starting from $x_0$, consider $p_0$ to be the first point in $[x_0,x_0']^s$ such that $\mathcal{F}^u(p_0)\cap l=\emptyset$ (such a point exists thanks to Item (4) of Lemma \ref{l.nonseparationlemma} and to the fact that $\mathcal{F}^u(x_0')\cap l'\neq\emptyset$). By Proposition \ref{p.propertiesoffoliformarkovianactions}, $\mathcal{F}^u(p_0)\cap [x_0,x_0']^s=\{p_0\}$; hence, there exist $V,V'$ two distinct connected components of $\mathcal{P}-\mathcal{F}^u(p_0)$ such that $[x_0,p_0]^s\subset \clos{V}$ and $[p_0,x_0']^s\subset \clos{V'}$. 

On the one hand, since $\mathcal{F}^u(x_0)\cap \mathcal{F}^u(p_0)=\emptyset$, $\mathcal{F}^u(x_0)\cap l\neq \emptyset$, and $\mathcal{F}^u(p_0)\cap l=\emptyset$, it follows that $l\subset V$. On the other hand, for any $w\in (p_0,x_0']^s$, the fact that $\mathcal{F}^u(p_0)\cap [x_0,x_0']^s=\{p_0\}$ implies that $\mathcal{F}^u(w)\cap \mathcal{F}^u(p_0)=\emptyset$; hence $\mathcal{F}^u(w)\subset V'$. We deduce that the $\mathcal{F}^u$-leaf of any point in $(p_0,x_0']^s$ cannot intersect $l$, which proves the desired result.
  
\end{proof}

%In addition to the existence of $p_0$, using the same arguments as in the proof of Claim 1, one can prove that here exists $q_0\in [x_0,x_0']^s$ such that $\mathcal{F}^u(q_0)\cap l' = \emptyset $ and also such that for every $y\in [x_0,x'_0]^s$ $$\mathcal{F}^u(y)\cap l'\neq \emptyset \Longleftrightarrow y\in (q_0,x'_0]^s$$ We remark that, thanks to Item (4) of Lemma \ref{l.nonseparationlemma}, $p_0\notin (q_0,x'_0]^s$ and $q_0\notin [x_0,p_0)^s$. Despite the previous fact, it is possible that the points $p_0$ and $q_0$ coincide.   

\vspace{0.5cm}
\textbf{Claim 2.} We have that $\mathcal{F}^u(p_0)\cap \mathfrak{R}$ is an $\mathcal{F}^u$-separatrix of $p_0$. 
%Similarly, $\mathcal{F}^u(q_0)\cap \mathfrak{R}$ is an $\mathcal{F}^u$-separatrix of $q_0$.
\vspace{0.2cm}

\begin{proof}[Proof of Claim 2.]
%We will prove that $\mathcal{F}^u(p_0)\cap \mathfrak{R}$ is an $\mathcal{F}^u$-separatrix of $p_0$. The fact that $\mathcal{F}^u(q_0)\cap \mathfrak{R}$ is an $\mathcal{F}^u$-separatrix of $q_0$ follows from a similar argument. 

Thanks to Item (4) of Proposition \ref{p.propertiesoffoliformarkovianactions} and to the fact that the interior of $\mathfrak{R}$ is trivially bifoliated we have that 

\begin{itemize}
    \item there exists a unique $\mathcal{F}^u$-separatrix of $p_0$ intersecting the interior of $\mathfrak{R}$ (see Figure \ref{f.nonseparated2}). Denote by $\mathcal{F}_-^u(p_0)$ the previous separatrix.
    \item $\mathcal{F}_-^u(p_0)\cap \mathfrak{R}$ is an open and connected subset of $\mathcal{F}_-^u(p_0)$ containing $p_0$
\end{itemize} 

  \begin{figure}[h!]
    \centering
    \includegraphics[width=0.5\linewidth]{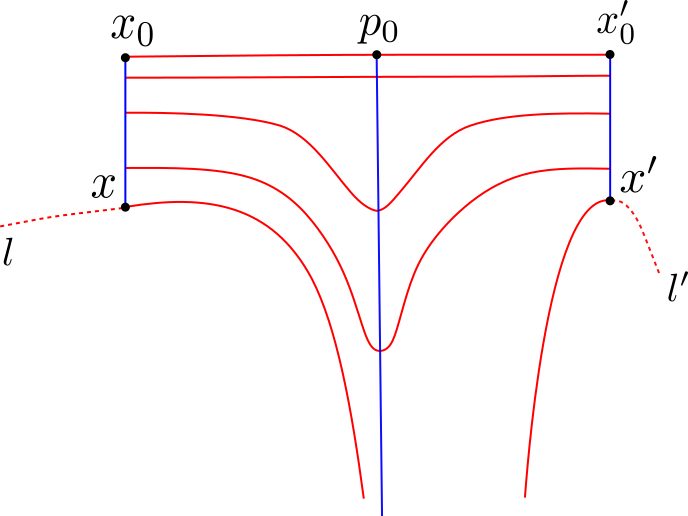}
    \caption{The above $\mathcal{F}^u-$separatrix of $p_0$ is contained in $\mathfrak{R}$.}
    \label{f.nonseparated2}
\end{figure}

Assume that $\mathcal{F}^u(p_0)\cap \mathfrak{R}\neq \mathcal{F}^u(p_0)$. By our previous arguments, this implies that there exists $t\in \mathcal{F}_-^u(p_0)$ such that $\mathcal{F}_-^u(p_0)\cap \mathfrak{R}=[p_0,t)^u$. Even more, since the interior of $\mathfrak{R}$ is trivially bifoliated, there exists $f:(x,x_0]^u\rightarrow (t,p_0]^u$ a homeomorphism such that for every $z\in (x,x_0]^u$ $$\mathcal{F}^s(z)\cap \mathcal{F}_-^u(p_0)=\{f(z)\}$$

Since $l$ does not intersect $\mathcal{F}^u(p_0)$, we get that $\mathcal{F}^s(t)$ is not separated from $l$. Moreover, the transversality of $\mathcal{F}^s$ and $\mathcal{F}^u$, together with the definition of $p_0$ and the fact that $\mathfrak{R}$ does not contain any singularity of $\mathcal{F}^{s,u}$ imply that for any point $y\in [x_0,p_0]^s$ that is sufficiently close to $p_0$ $$\mathcal{F}^u(y)\cap \mathcal{F}^s(t)\neq \emptyset \text{ and }\mathcal{F}^u(y)\cap l\neq \emptyset$$ This contradicts Item (4) of Lemma \ref{l.nonseparationlemma} applied for the leaves $l$ and $\mathcal{F}^s(t)$. It follows that $\mathcal{F}_-^u(p_0)\cap \mathfrak{R}=\mathcal{F}_-^u(p_0)$, which proves the desired result. 
\end{proof}

\vspace{0.5cm}
\textbf{Claim 3.} The leaf $\mathcal{F}^u(p_0)$ is periodic for $\rho$. 
\vspace{0.2cm}

\noindent \textit{Proof of Claim 3.}
%We will prove that $\mathcal{F}^u(p_0)$ is periodic. The fact that $\mathcal{F}^u(q_0)$ is also periodic will follow from a similar argument. 
If $p_0$ belongs in the $\mathcal{F}^u$-boundary of a rectangle in $\mathcal{R}$, then we deduce the periodicity of $\mathcal{F}^u(p_0)$  from Lemma \ref{l.boundaryleavesareperiodic}. We will therefore assume from now on and for the rest of the proof of this claim that $p_0$ does not belong in the $\mathcal{F}^u$-boundary of any rectangle in $\mathcal{R}$. 

Denote by $\mathcal{F}_-^u(p_0)$ the $\mathcal{F}^u$ separatrix of $p_0$ contained in $\mathfrak{R}$. Thanks to the strong finite return time axiom, there exists $$R_0,...,R_n...$$ a sequence of rectangles in $\mathcal{R}$ such that $p_0\in R_k$, $R_k\cap \inte{\mathfrak{R}}\neq\emptyset$, and $R_{k+1}\cap R_k$ is a vertical subrectangle of $R_k$ for every $k\in\mathbb{N}$ (see Figure \ref{f.nonseparated3}). Notice that, since $\mathfrak{R}$ is trivially bifoliated, Proposition \ref{p.propertiesoffoliformarkovianactions} implies that $R_k$ admits at most one $\mathcal{F}^s$-boundary component in the interior of $\mathfrak{R}$. Moreover, by Claim 2 and the expansivity axiom, after possibly passing to a subsequence of $(R_n)_{n\in\mathbb{N}}$, we may assume that for every $k\in\mathbb{N}$ the rectangle $R_k$ admits exactly one $\mathcal{F}^s$-boundary component in the interior of $\mathfrak{R}$.

  \begin{figure}[h!]
    \centering
    \includegraphics[width=0.5\linewidth]{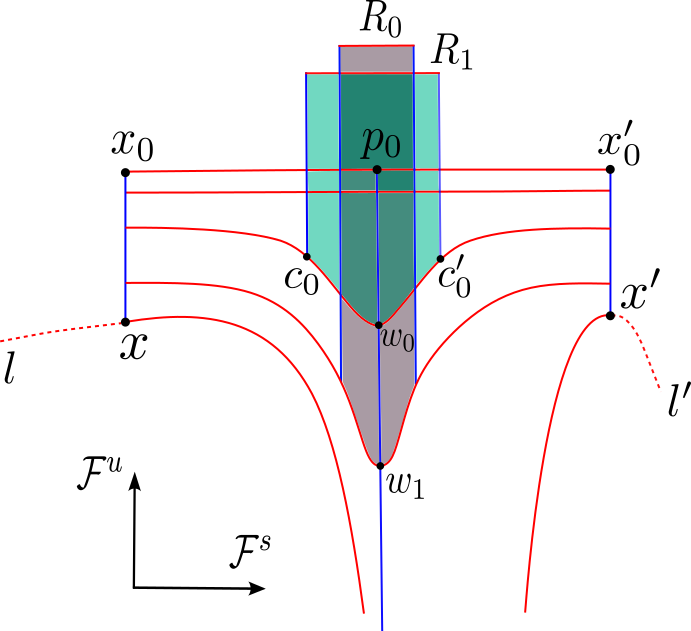}
    \caption{The bottom-left corner of the figure shows our chosen orientations of $\mathcal{F}^s$ and $\mathcal{F}^u$.}
    \label{f.nonseparated3}
\end{figure}

Endow the $\mathcal{F}^s$ and the $\mathcal{F}^u$ leaves in $\mathfrak{R}$ with orientations so that any  $\mathcal{F}^s$-segment going from $(x,x_0]^u$ to $(x',x_0']^u$ becomes positively oriented and any $\mathcal{F}^u$-segment going from $[x_0,x_0']^s$ to $l$ or $l'$ becomes negatively oriented. Thanks to the previous choices of orientations, we can naturally orient the $\mathcal{F}^s$ and $\mathcal{F}^u$ leaves inside every $R_k$. 

By the finiteness axiom, after possibly  considering a subsequence of $(R_n)_{n\in \mathbb{N}}$, we can assume without any loss of generality that all the rectangles in $(R_n)_{n\in \mathbb{N}}$ belong in the same $\rho$-orbit of rectangles. Denote by $g_k\in G$ the unique element in $G$ (see Lemma \ref{l.preserverectangleimpliestorsion}) for which $\rho(g_k)(R_k)=R_{k+1}$. Passing to a further subsequence if necessary, we will assume without any loss of generality that $\rho(g_k)$ sends the oriented $\mathcal{F}^{s,u}$-leaves of $R_k$ to the oriented $\mathcal{F}^{s,u}$-leaves of $R_{k+1}$. 

Under the previous assumptions, we will show that $\rho(g_0)$ preserves $\mathcal{F}^u(p_0)$. Indeed, denote by $\partial_-^sR_k$ the unique $\mathcal{F}^s$-boundary component of $R_k$ that lies in the interior of $\mathfrak{R}$ and by $w_k$ 
the unique point inside $\mathcal{F}_-^u(p_0)\cap \partial_-^sR_k$ (see Figure \ref{f.nonseparated3}). By construction, $\rho(g_0)(\partial_-^sR_0)=\partial_-^sR_1$; hence 
\begin{equation*}
    \rho(g_0)(w_0)\in \partial_-^sR_1\subset \inte{\mathfrak{R}}
\end{equation*}

If $w_1= \rho(g_0)(w_0)$, then $\rho(g_0)$ preserves $\mathcal{F}^u(p_0)$, which proves the desired result. Suppose from now on that $$w_1\neq \rho(g_0)(w_0)$$
In order to prove that this is impossible, we consider the two following cases.

\textbf{Case 1: The segment $[w_1,\rho(g_0)(w_0)]^s$ is positively oriented}

In this case, take $c_0$ to be the extremity of $\partial^s_-R_0$ for which $[c_0,w_0]^s$ is positively oriented (see Figure \ref{f.nonseparated3}). Recall that $p_0$ and thus also $w_0$ is not contained in the $\mathcal{F}^u$-boundary of $R_0$; therefore, $c_0\neq w_0$. By Claim 1 and since the interior of $\mathfrak{R}$ is trivially bifoliated, we have that $w_0$ is the only point in $[c_0,w_0]^s$ whose $\mathcal{F}^u$-leaf does not intersect $l$. More precisely, for every $z\in [c_0,w_0)^s$ the negative $\mathcal{F}^u$-separatrix of $z$ intersects $l$. 

Recall that $\rho(g_0)$ sends oriented $\mathcal{F}^{s,u}$-segments of $R_0$ to oriented $\mathcal{F}^{s,u}$-segments of $R_1$ and also that $p_0$ does not belong to the $\mathcal{F}^u$-boundary of any rectangle in $\mathcal{R}$. Using the previous facts, as well as the fact that $[w_1,\rho(g_0)(w_0)]^s$ is non-trivial and positively oriented, we get that $w_1$ lies in the interior of $\rho(g_0)([c_0,w_0]^s)$ and also that the negative $\mathcal{F}^u$-separatrix of $w_1$ intersects $\rho(g_0)(l)$. By Claim 2, this implies that $\rho(g_0)(l)$ intersects the interior of $\mathfrak{R}$ and thus intersects the $\mathcal{F}^u$-leaf of $\rho(g_0)(w_0)\in \inte{\mathfrak{R}}$. This is impossible as $\mathcal{F}^u(p_0)=\mathcal{F}^u(w_0)$ does not intersect $l$. 

\vspace{0.3cm}
\textbf{Case 2: The segment $[w_1,\rho(g_0)(w_0)]^s$ is negatively oriented}

In this case, take $c'_0$ to be the extremity of $\partial^s_-R_0$ for which $[w_0,c'_0]^s$ is positively oriented (see Figure \ref{f.nonseparated3}). Recall that $p_0$ and thus also $w_0$ is not contained in the $\mathcal{F}^u$-boundary of $R_0$; therefore, $c'_0\neq w_0$. 

Similarly to Case 1, we have that $w_1$ lies in the interior of $\rho(g_0)([w_0, c_0']^s)$. Therefore, thanks to Claims 1 and 2, the negative $\mathcal{F}^u$-separatrix of $\rho(g_0)(w_0)$ intersects $l$ and $l$ intersects the interior of $\rho(g_0)(\mathfrak{R})$. Since $\rho(g_0)(\mathfrak{R})$ is trivially bifoliated and $w_1\in \rho(g_0)([w_0, c_0']^s)\subset \rho(g_0)(\inte{\mathfrak{R}})$, we get that $l\cap \mathcal{F}^u(w_1)\neq \emptyset$, which is absurd, as $\mathcal{F}^u(p_0)=\mathcal{F}^u(w_1)$ does not intersect $l$.  

\vspace{0.2cm}
Our previous arguments imply that $\rho(g_0)(w_0)=w_1$ and thus that $\rho(g_0)$ preserves $\mathcal{F}^u(p_0)$, which finishes the proof of Claim 3. \qed

\vspace{0.4cm}
We are now ready to show that $l$ is periodic, thereby completing the argument for Lemma \ref{l.nonseparatedimpliesfixed}. Take $g\in \text{Stab}_{\rho}(\mathcal{F}^u(p_0))-\{e\}$ such that $\rho(g)(\mathcal{F}_-^u(p_0))\subseteq \mathcal{F}_-^u(p_0)$. Such a $g$ exists, thanks to Claim 2 and Lemma \ref{l.hyperbolicityforperiodicpoints}. By the definition of $g$, we have that $$\underset{y\in \rho(g)(\mathcal{F}_-^u(p_0))}{\bigcup}\mathcal{F}^s(y)\subset \underset{y\in \mathcal{F}_-^u(p_0)}{\bigcup}\mathcal{F}^s(y)$$ This implies that the $\mathcal{F}^s$-saturated neighborhoods of $l$ and $\rho(g)(l)$ always intersect along some leaf in $\underset{y\in \mathcal{F}_-^u(p_0)}{\bigcup}\mathcal{F}^s(y)$; thus  $\rho(g)(l)=l$ or $\rho(g)(l)$ is non separated from $l$. 

Next, observe that the facts that $\rho(g)$ is orientation preserving and that $[x_0,p_0]^s$ is positively oriented imply that the segment $[\rho(g)(x_0),\rho(g)(p_0)]^s$ is also positively oriented. Consequently, for every point $z\in[x_0,p_0)^s$ sufficiently close to $p_0$, the following hold:

\begin{enumerate}
     \vspace{0.1cm}
    \item by Claim 1, $\mathcal{F}^u(z)\cap l\neq\emptyset$, and therefore $\mathcal{F}^u(\rho(g)(z))\cap \rho(g)(l)\neq\emptyset$
    
    \vspace{0.1cm}
    \item since $\rho(g)(p_0)\in \mathcal{F}^u_-(p_0)\subset \mathfrak{R}$ (the inclusion follows from Claim 2), we have $\rho(g)(z)\in \mathfrak{R}$. Moreover, since $\mathfrak{R}$ is trivially bifoliated, $\mathcal{F}^u(\rho(g)(z))\cap [x_0,p_0)^s\neq\emptyset$

     \vspace{0.1cm}
    \item since $\mathcal{F}^u(\rho(g)(z))\cap [x_0,p_0)^s\neq\emptyset$, Claim 1 implies that $\mathcal{F}^u(\rho(g)(z))\cap l\neq\emptyset$
\end{enumerate}

The fact that $\mathcal{F}^u(\rho(g)(z))$ intersects both $l$ and $\rho(g)(l)$ implies, by Item (4) of Lemma \ref{l.nonseparationlemma}, that $\rho(g)(l)$ cannot be distinct from $l$ and non-separated from it. Hence, $\rho(g)(l)=l$, which finishes the proof of the lemma. 

\end{proof}
We remark that the argument used in the final part of the proof of Lemma~\ref{l.nonseparatedimpliesfixed} also yields the following statement:

\begin{lemm}\label{l.perfectfitperiodic}
    Consider $l^s\in \mathcal{F}^s$ and $l^u\in \mathcal{F}^u$ two leaves forming a perfect fit. Assume that $l^s$ is periodic for $\rho$ and let $p^s$ be the unique periodic point contained in $l^s$. For any element $g\in \text{Stab}_{\rho}(p^s)$ such that $\rho(g)$ preserves every $\mathcal{F}^{s,u}$-separatrix of $p^s$, we have that $\rho(g)$ preserves both $l^s$ and $l^u$. 
\end{lemm}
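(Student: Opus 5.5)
We first show that $\rho(g)$ preserves $l^s$, and then deduce from this that it also preserves $l^u$. By Lemma \ref{l.hyperbolicityforperiodicpoints}, after possibly replacing $g$ by $g^{-1}$ (which changes nothing in the conclusion), $\rho(g)$ contracts every $\mathcal{F}^s$-separatrix of $p^s$ towards $p^s$ and expands every $\mathcal{F}^u$-separatrix. Since $l^s=\mathcal{F}^s(p^s)$, we get $\rho(g)(l^s)=l^s$ at once. The real content is therefore the invariance of $l^u$.

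Let $D_s$ be the component of $\mathcal{P}-l^s$ containing $l^u$, and $D_u$ the component of $\mathcal{P}-l^u$ containing $l^s$, as in Definition \ref{d.perfectfit}. Let $f^s$ be the $\mathcal{F}^s$-separatrix of $p^s$ (inside $l^s$) whose points near infinity approach $l^u$. Let $Q$ be the quadrant of $p^s$ bounded by $f^s$ and by the $\mathcal{F}^u$-separatrix $f^u$ of $p^s$ that points into $D_s$. Because $\rho(g)$ fixes $p^s$ and preserves every separatrix, it preserves $f^s$, $f^u$, $Q$ and $D_s$.

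By the perfect fit property, there is an $\mathcal{F}^u$-saturated band near $l^s$: for $z$ in $D_s\cap D_u$ close to $l^s$ and far out along $f^s$, the leaf $\mathcal{F}^u(z)$ crosses $l^s$ and $\mathcal{F}^s(z)$ crosses $l^u$. This produces a ``half-rectangle'' $\mathfrak{R}$ playing the role it played in the proof of Lemma \ref{l.nonseparatedimpliesfixed}, with $l^u$ in the role of $\mathcal{F}^u(p_0)$. For a point $w\in f^u$ close to $p^s$, let $\phi(w)$ denote the point where $\mathcal{F}^s(w)$ meets $l^u$, if it exists. The perfect fit says that, for $w$ near $p^s$, the leaf $\mathcal{F}^s(w)$ meets $l^u$ exactly when $w$ lies on the side of $f^u$ pointing into $D_s$. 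So $l^u$ is characterized as the unique $\mathcal{F}^u$-leaf contained in $D_s\cap \overline{D_u}$ such that every $\mathcal{F}^u$-leaf crossing $f^s$ sufficiently far out lies on one side of it and crosses $l^s$, while $l^u$ itself does not cross $l^s$. In other words, $l^u$ is the ``first'' $\mathcal{F}^u$-leaf beyond the union of $\mathcal{F}^u$-leaves meeting the tail of $f^s$.

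This characterization is purely in terms of $l^s$, $f^s$, $D_s$ and the bifoliation, all of which are $\rho(g)$-invariant. Hence $\rho(g)(l^u)$ satisfies the same characterization, and uniqueness gives $\rho(g)(l^u)=l^u$. To make the step rigorous I would argue exactly as in the final paragraph of the proof of Lemma \ref{l.nonseparatedimpliesfixed}. I take $z$ on the tail of $f^s$ sufficiently close to the ``end'' where the perfect fit occurs, so that $\mathcal{F}^u(z)$ meets $\mathcal{F}^s(y)$ for some $y$ near $l^u$. Then:
\begin{enumerate}
\item $\rho(g)$ pushes $z$ toward $p^s$ along $f^s$.
\item The leaves $\mathcal{F}^u(\rho(g)(z))$ and $\mathcal{F}^u(z)$ both cross $l^s$.
\item $\rho(g)(l^u)$ makes a perfect fit with $l^s$ on the same side, namely in $D_s$ along $f^s$.
\end{enumerate}
Two distinct $\mathcal{F}^u$-leaves cannot both form a perfect fit with the same end of $f^s$ on the same side, since otherwise an $\mathcal{F}^s$-leaf between them would cross one of them twice, contradicting Item (4) of Proposition \ref{p.propertiesoffoliformarkovianactions}.

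The main obstacle is this last uniqueness statement: a given side of a given separatrix end admits at most one perfect-fitting leaf. I would prove it via the trivially bifoliated band near $l^s$ and Item (4) of Proposition \ref{p.propertiesoffoliformarkovianactions}, mirroring Claim 1 of Lemma \ref{l.nonseparatedimpliesfixed}. The $\mathcal{F}^u$-leaves crossing $f^s$ are nested, and the boundary of their union in the band is a single leaf. We need that the preservation of $f^s$ and of the orientation of $\mathcal{P}$ indeed forces $\rho(g)$ to map this side and end to itself, which is exactly what the separatrix-preserving assumption on $g$ provides.
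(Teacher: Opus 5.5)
Your reduction is correct, and it is essentially the paper's argument with an intermediate statement factored out.

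The paper reruns the last step of the proof of Lemma \ref{l.nonseparatedimpliesfixed}. Since $\rho(g)$ preserves $f^s$, the $\mathcal{F}^u$-leaves through the tail of $f^s$ accumulate on both $l^u$ and $\rho(g)(l^u)$, so these two leaves are equal or non-separated. Next, take $w$ on the $\mathcal{F}^u$-separatrix of $p^s$ in $D_s$, close to $p^s$. The leaf $\mathcal{F}^s(\rho(g)(w))$ crosses $\rho(g)(l^u)$, as the image of the perfect-fit property at $w$. It also crosses $l^u$, by the perfect-fit property at $\rho(g)(w)$. Item (4) of Lemma \ref{l.nonseparationlemma} forbids this unless $\rho(g)(l^u)=l^u$.

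You instead observe that $\rho(g)(l^u)$ is again a perfect-fit partner of the same end $f^s$ on the same side $D_s$, and you invoke uniqueness of such partners. This isolates a purely bifoliation-theoretic fact. It also makes clear that only the invariance of $l^s$, $f^s$ and $D_s$ matters, with no dynamical choice of a transversal. Several parts of your write-up can therefore be dropped: the appeal to Lemma \ref{l.hyperbolicityforperiodicpoints}, your items (1)--(2), and the informal ``characterization'' of $l^u$. As stated, that characterization is satisfied by any $\mathcal{F}^u$-leaf of $D_s\cap\overline{D_u}$ disjoint from $l^s$.

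Since uniqueness carries the whole content, its proof should be stated precisely. Your justification, ``an $\mathcal{F}^s$-leaf between them would cross one of them twice'', is not accurate as written: a stable leaf meets each candidate at most once. A statement about a single product band also does not compare two partners. Here is the actual mechanism.
\begin{itemize}
\item Suppose $l^u_1\neq l^u_2$ both perfect-fit the end of $f^s$ inside $D_s$.
\item By item (1) of Definition \ref{d.perfectfit}, the $\mathcal{F}^s$-leaf $\sigma$ of a point of $D_s$ close to the tail of $f^s$ crosses both $l^u_1$ and $l^u_2$. The two crossing points are distinct, since $l^u_1\cap l^u_2=\emptyset$.
\item The two perfect-fit product bands share the tail of $f^s$. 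So for $z$ far enough along $f^s$, the leaf $\mathcal{F}^u(z)$ runs through both bands and crosses $\sigma$ once near each of these two points.
\item This contradicts Item (4) of Proposition \ref{p.propertiesoffoliformarkovianactions}.
\end{itemize}
With this in place, your argument is complete.
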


Finally, before concluding this section, we state the following generalization of Lemma \ref{l.nonseparatedimpliesfixed}, which will be used during the next section. 

\begin{lemm}\label{l.fixatthesametimenonsepleaves}
    Consider $S$ a set of non-separated $\mathcal{F}^s$- or $\mathcal{F}^u$-leaves in $\mathcal{P}$. We have that there exists $g\in G-\{e\}$ such that $\rho(g)(l)=l$ for every $l\in S$. 
\end{lemm}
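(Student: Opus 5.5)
The plan is to reduce to the pairwise statement of Lemma \ref{l.nonseparatedimpliesfixed}, and then to upgrade the single stabilizing element into a common one.

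Take $S$ to be a set of pairwise non-separated leaves in $\mathcal{F}^s$; the $\mathcal{F}^u$ case is symmetric. Fix $l_0\in S$. By Lemma \ref{l.nonseparatedimpliesfixed}, $l_0$ is periodic. By Lemma \ref{l.periodicleavescarryuniqueperiodicpoint}, it contains a unique periodic point $p_0$, and every non-trivial element of $\mathrm{Stab}_\rho(l_0)$ fixes $p_0$, by Corollary \ref{c.twofixedpointssameleaf} together with the first part of Lemma \ref{l.longrectanglesregularleaves}. Pick $g_0\in\mathrm{Stab}_\rho(p_0)-\{e\}$. Since $G$ has no torsion, after replacing $g_0$ by a power we may assume $\rho(g_0)$ preserves every $\mathcal{F}^{s}$- and $\mathcal{F}^u$-separatrix of $p_0$. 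We may also assume, by Lemma \ref{l.hyperbolicityforperiodicpoints}, that $\rho(g_0)$ contracts $\mathcal{F}^s(p_0)$ and expands $\mathcal{F}^u(p_0)$. The goal is to show that this single $g:=g_0$ fixes every $l\in S$.

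Fix $l\in S-\{l_0\}$ and consider the configuration of Lemma \ref{l.nonseparationlemma} for the pair $(l_0,l)$. Claims 1 and 2 in the proof of Lemma \ref{l.nonseparatedimpliesfixed} produce a point $p_0'$ on the transversal $[x_0,x_0']^s$ whose $\mathcal{F}^u$-leaf has a separatrix filling the region between $l_0$ and $l$. By Claim 3, $\mathcal{F}^u(p_0')$ is periodic. The main step is to show that this unstable leaf forms a perfect fit with $l_0$. Indeed, every unstable leaf through $[x_0,p_0')^s$ hits $l_0$, while $\mathcal{F}^u(p_0')$ does not, and its separatrix lies in the closure of $\mathfrak{R}$.

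With the perfect fit in hand, I plan to apply Lemma \ref{l.perfectfitperiodic} to the pair $(l_0,\mathcal{F}^u(p_0'))$. It shows that $\rho(g)$ preserves $\mathcal{F}^u(p_0')$, and hence its unique periodic point $q$. The separatrix of $q$ lying in $\mathfrak{R}$ is preserved, and, since $\rho(g)$ fixes $l_0$, $\rho(g)$ must also be an expansion along it, exactly as in the final part of the proof of Lemma \ref{l.nonseparatedimpliesfixed}. The argument concluding that proof then applies verbatim with this $g$: the stable saturations of $l$ and $\rho(g)(l)$ intersect, and Item (4) of Lemma \ref{l.nonseparationlemma} rules out $\rho(g)(l)\neq l$. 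So $\rho(g)(l)=l$.

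I expect the perfect-fit verification to be the main obstacle, in particular when $l_0$ is singular and the chosen separatrices must be handled prong by prong. To deal with this, I would restrict $g$ to preserve all separatrices of $p_0$, as above, so that Lemma \ref{l.perfectfitperiodic} applies uniformly. Since $g$ was chosen depending only on $l_0$, the same element works for all of $S$, including when $S$ is infinite.
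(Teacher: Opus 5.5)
Your first half is sound. If $p_0'$ is the first point of $[x_0,x_0']^s$ whose $\mathcal{F}^u$-leaf misses $l_0$, then $\mathcal{F}^u(p_0')$ does form a perfect fit with $l_0$, and Lemma~\ref{l.perfectfitperiodic} gives $\rho(g)(\mathcal{F}^u(p_0'))=\mathcal{F}^u(p_0')$. The gap is your claim that the end of the proof of Lemma~\ref{l.nonseparatedimpliesfixed} then ``applies verbatim'' to $l$. The stable-saturation step does transfer: it gives that $\rho(g)(l)$ is either $l$ or a leaf non-separated from $l$. The decisive step is different. It needs one $\mathcal{F}^u$-leaf meeting both a leaf and its image, and that input comes from Claim~1: for $z\in[x_0,p_0')^s$ close to $p_0'$, both $\mathcal{F}^u(z)$ and $\mathcal{F}^u(\rho(g)(z))$ meet $l_0$. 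So that argument proves $\rho(g)(l_0)=l_0$, which you already knew, and says nothing about $l$. To run it for $l$, you need the $\mathcal{F}^u$-leaves just on the \emph{other} side of $\mathcal{F}^u(p_0')$ to meet $l$, i.e.\ $\mathcal{F}^u(p_0')$ must also form a perfect fit with $l$. Your phrase ``a separatrix filling the region between $l_0$ and $l$'' assumes this without proof.

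That is not available in general. Suppose some leaf $l_1$, non-separated from both, lies between $l_0$ and $l$ in the ordered family of leaves non-separated from $l_0$ on that side (another element of $S$, or a leaf outside $S$). Then the $\mathcal{F}^u$-leaves just past $\mathcal{F}^u(p_0')$ meet $l_1$, not $l$. Even when $l$ is adjacent to $l_0$, you still have to rule out a nondegenerate interval of $[x_0,x_0']^s$ between $p_0'$ and the analogous point $q_0$ for $l$ (the last point whose $\mathcal{F}^u$-leaf misses $l$) whose $\mathcal{F}^u$-leaves meet no leaf of that family. By the argument of Claim~2, such an interval spans an infinite product region (two $\mathcal{F}^u$-rays joined by an $\mathcal{F}^s$-segment, as in the proof of Lemma~\ref{l.noinfiniteproductregion}). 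In this paper, Lemma~\ref{l.noinfiniteproductregion} is proved via Lemma~\ref{l.embeddingweirdrectangle}, which itself uses the statement you are proving, so appealing to it would be circular. To repair your approach, you would iterate Lemma~\ref{l.perfectfitperiodic} along an alternating chain of perfect fits $l_0,\ \mathcal{F}^u(p_0'),\ l_1,\dots$, checking at each step that $\rho(g)$ preserves all separatrices of the new periodic point. You would also need an independent proof that consecutive links really are perfect fits and that $l$ is reached after finitely many steps. For comparison, the paper does not prove this lemma directly; it imports Proposition~2.12 of \cite{nontransitiveanosovlike}, which applies once (A1)--(A4) have been established.
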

The previous lemma is proved in \cite{nontransitiveanosovlike} for actions satisfying Axioms~(A1)--(A4) of Definition~\ref{d.anosovlike} (see Proposition~2.12). Since $\rho$ has been shown to satisfy these axioms, we refer to \cite{nontransitiveanosovlike} for the proof.

\section{On the non existence of ideal quadrilaterals in $\mathcal{P}$}
In this final section of this paper, we prove that $\rho$ satisfies Axiom (A5) of Definition \ref{d.anosovlike}, thereby completing the proof of Theorem \ref{t.main}. The proof of this fact relies on the Lemmas \ref{l.embeddingweirdrectangle} and \ref{l.noinfiniteproductregion}, which restrict the behavior of $\mathcal{F}^s$ and $\mathcal{F}^u$ inside $\mathcal{P}$.

\begin{lemm}\label{l.embeddingweirdrectangle}
    There does not exist an embedding $\phi: (0,1)^2\rightarrow \mathcal{P}$ such that: 

    \begin{enumerate}
        \item $\phi((0,1) \times \{t\})$ is an $\mathcal{F}^s$-segment in $\mathcal{P}$ for every $t\in (0,1)$ 

        \vspace{0.1cm}
        \item $\phi(\{t\}\times (0,1))$ is an $\mathcal{F}^u$-segment in  $\mathcal{P}$ for every $t\in (0,1)$ 
        \vspace{0.1cm}
        \item as $t\to 1$, the accumulation set of the family of $\mathcal{F}^s$-segments $\phi((0,1)\times\{t\})$ is exactly one face of a single $\mathcal{F}^s$-leaf
        \vspace{0.1cm}
        \item as $t\to 1$, the accumulation set of the family of  $\mathcal{F}^u$-segments $\phi(\{t\}\times (0,1))$ is exactly a union of faces contained in a family of at least two non-separated $\mathcal{F}^u$-leaves
    \end{enumerate}
\end{lemm}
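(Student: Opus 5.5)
We argue by contradiction: assume such an embedding $\phi$ exists. Write $L^s$ for the $\mathcal{F}^s$-leaf whose face is the accumulation set in item (3), and let $S=\{l_1,l_2,\dots\}$ be the family of pairwise non-separated $\mathcal{F}^u$-leaves of item (4). We use the periodicity results of Section 3 to produce a non-trivial $g\in G$ that fixes all the leaves in $S$ and also $L^s$. We then show that such a $g$ is incompatible with the hyperbolic dynamics of Lemma \ref{l.hyperbolicityforperiodicpoints} and with the expansivity axiom.

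\textbf{Step 1.} By Lemma \ref{l.fixatthesametimenonsepleaves} there is $g\in G-\{e\}$ with $\rho(g)(l_i)=l_i$ for every $l_i\in S$. Replacing $g$ by a power (no torsion), we may assume that $\rho(g)$ preserves every separatrix of the unique periodic point $p_i\in l_i$ (Lemma \ref{l.periodicleavescarryuniqueperiodicpoint}). By Lemma \ref{l.hyperbolicityforperiodicpoints}, after possibly passing to $g^{-1}$, $\rho(g)$ contracts each $\mathcal{F}^s(p_i)$ toward $p_i$ and expands each $l_i=\mathcal{F}^u(p_i)$.

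\textbf{Step 2.} We show that $\rho(g)$ preserves $L^s$ and the image of $\phi$ near the corner $t\to 1$. The accumulating faces of $l_1,l_2$ bound the region $\phi((0,1)^2)$ "from the $t\to1$ side". Consider a single face of $L^s$ that accumulates, together with the faces of the $l_i$. Since $\rho(g)$ fixes each $l_i$ and preserves its separatrices (hence its faces), it maps the union of those faces to itself. The $\mathcal{F}^s$-segments $\phi((0,1)\times\{t\})$ for $t$ close to $1$ run between the two consecutive non-separated leaves $l_1,l_2$; these are the $\mathcal{F}^s$-leaves that meet both $l_1$ and $l_2$ near their accumulating faces. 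Such leaves are sent by $\rho(g)$ to leaves with the same property, so the set they accumulate on, namely the face of $L^s$, is $\rho(g)$-invariant. In particular $\rho(g)(L^s)=L^s$, so $L^s$ is periodic and contains a unique periodic point $q$, fixed by $\rho(g)$.

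\textbf{Step 3 (contradiction).} The leaf $L^s$ is an $\mathcal{F}^s$-leaf approached by $\mathcal{F}^s$-segments crossing both $l_1$ and $l_2$, yet $L^s$ itself is "between" them in the limit. It therefore makes perfect-fit-like configurations with $l_1$ and $l_2$ at two distinct ideal corners. We now compare dynamics. On $l_1$, the point $p_1$ is repelling along $l_1$, so $\rho(g)$ pushes the intersection points $\phi((0,1)\times\{t\})\cap l_1$ away from $p_1$. Using Step 1, this forces the $\mathcal{F}^s$-leaves through them to move monotonically toward $L^s$, i.e.\ $t\mapsto 1$. Now take a rectangle $R\in\mathcal{R}$ containing a germ of $q$ on the side of the image of $\phi$ (strong finite return time axiom). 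By Lemma \ref{l.longrectanglesregularleaves} applied to $L^s$ and the half-plane $D$ containing $\phi((0,1)^2)$, $R$ can be chosen to contain long pieces of the face of $L^s$. The iterates $\rho(g^{n})(R)$ are then nested predecessors, whose intersection is, by expansivity, an $\mathcal{F}^u$-leaf of $R$. However, they must keep meeting both $l_1$ and $l_2$ near their faces. By Item (4) of Lemma \ref{l.nonseparationlemma}, a single $\mathcal{F}^u$-segment cannot reach both, and the limiting $\mathcal{F}^u$-leaf would have to do so. Alternatively, we can get the contradiction by applying Lemma \ref{l.twofixedpointsinthesamerectangle} to a rectangle that contains $q$ together with a point of some $l_i$ forced to be fixed.

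The main obstacle is Step 2, together with making Step 3 precise. Step 2 needs a careful argument that the $\mathcal{F}^s$-accumulation face is canonically determined by the invariant faces of the $l_i$. Step 3 needs the right rectangle witnessing a contradiction with expansivity. For Step 3 I would first try to show that $q$ and the points $p_i$ lie in a common rectangle of $\mathcal{R}$, using Lemmas \ref{l.longrectanglesregularleaves} and \ref{l.longrectanglesregularleavesnoperiodicpoints}, and then conclude with Lemma \ref{l.twofixedpointsinthesamerectangle}.
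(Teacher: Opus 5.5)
Your overall plan is the paper's: find $g\neq e$ fixing every leaf $l_i$ of the non-separated family and also $L^s$, then contradict the uniqueness of fixed points on a leaf. Step 1 matches the paper, but Steps 2 and 3 have genuine gaps. (In Step 1, the fact that a single choice between $g$ and $g^{-1}$ makes $\rho(g)$ contract the separatrix $\mathcal{F}^s_V(p_i)$ of $p_i$ entering $V=\phi((0,1)^2)$ for all $i$ at once follows from $V$ being trivially bifoliated.)

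\textbf{Step 2.} It rests on a false picture. By Item (4) of Lemma \ref{l.nonseparationlemma}, with $\mathcal{F}^s$ and $\mathcal{F}^u$ exchanged, no $\mathcal{F}^s$-leaf through $V$ meets two distinct leaves $l_1,l_2$ of the family. So the ``$\mathcal{F}^s$-leaves meeting both $l_1$ and $l_2$'' do not exist, and the invariance you invoke says nothing about $L^s$. The paper obtains $\rho(g)(L^s)=L^s$ by ordering the family with the $\mathcal{F}^u$-orientation pointing towards $L^s$ and splitting into two cases.
\begin{itemize}
\item If there is a maximal leaf $L_{max}$, it forms a perfect fit with $L^s$, and Lemma \ref{l.perfectfitperiodic} gives the invariance.
\item If not, there are $L_n$ whose $\rho(g)$-invariant separatrices $\mathcal{F}^s_V(p_{L_n})$ accumulate on $L^s$, so $\rho(g)(L^s)$ is either $L^s$ or non-separated from it. The second option is excluded as follows. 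Take an $\mathcal{F}^u$-leaf $I^u$ meeting both $L^s$ and $\mathcal{F}^s_V(p_{L_0})$. Since $\rho(g)$ contracts that separatrix, $\rho(g)(I^u)$ meets both $L^s$ and $\rho(g)(L^s)$, contradicting Item (4) of Lemma \ref{l.nonseparationlemma}.
\end{itemize}
Neither this case split nor the exclusion of the non-separated alternative appears in your proposal.

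\textbf{Step 3.} It cannot work as designed. $L^s$ does not form perfect fits with two of the $l_i$; it can do so only with a maximal leaf, if one exists. More decisively, $L^s\cap l_i=\emptyset$ for every $i$. Hence no rectangle of $\mathcal{R}$ can meet both $L^s$ and some $l_i$: for $x\in L^s$ and $y\in l_i$ in the rectangle, it would contain the point of $\mathcal{F}^s(x)\cap\mathcal{F}^u(y)\subset L^s\cap l_i$. This kills your expansivity argument, since iterates of a rectangle at $q$ cannot ``keep meeting $l_1$ and $l_2$''. It also kills your fallback of putting $q$ and $p_i$ in a common rectangle in order to apply Lemma \ref{l.twofixedpointsinthesamerectangle}.

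The missing idea is a third fixed point, and it needs no rectangles. Since $q$ and $p_i$ lie on $\partial V$, the leaves $\mathcal{F}^u(q)$ and $\mathcal{F}^s(p_i)$ both cross the trivially bifoliated region $V$, so they meet in a unique point $z$. Both leaves are $\rho(g)$-invariant, hence $\rho(g)(z)=z$. Moreover $z\neq p_i$, since otherwise $q\in L^s\cap l_i$. Thus $\rho(g)$ fixes two distinct points of $\mathcal{F}^s(p_i)$, contradicting Corollary \ref{c.twofixedpointssameleaf}.
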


Naturally, the same result holds if we change the roles of $\mathcal{F}^s$ and $\mathcal{F}^u$. In simpler terms, the above lemma excludes the existence of trivially bifoliated regions bounded by above or below by a single $\mathcal{F}^s$-leaf and on one side by several non-separated $\mathcal{F}^u$-leaves (see Figures \ref{f.case1proof} and \ref{f.case2proof}).
\begin{proof}
    Assume by contradiction that such an embedding $\phi$ exists. Let $V:=\phi\big( (0,1)^2\big)$, $L^s$ be the $\mathcal{F}^s$-leaf on which the segments $\phi((0,1) \times \{t\})$ accumulate when $t\rightarrow 1$ and $\mathcal{L}^u$ be the set of $\mathcal{F}^u$-leaves on which the segments $\phi(\{t\}\times (0,1))$ accumulate when $t\rightarrow 1$.
    
    By our definition of $\phi$, the set $V$ is trivially bifoliated by $\mathcal{F}^s$ and $\mathcal{F}^u$. Also, by Lemmas \ref{l.periodicleavescarryuniqueperiodicpoint} and \ref{l.nonseparatedimpliesfixed}, every $L\in \mathcal{L}^u$ carries a unique periodic point for $\rho$, which will be denoted by $p_L$. We remark that, since $\partial V$ contains a face of each leaf in $\mathcal{L}^u$, we have that $p_L\in \partial V$ for every $L\in \mathcal{L}^u$. In particular, for every $L\in \mathcal{L}^u$ there exists a unique $\mathcal{F}^s$-separatrix of $p_L$ intersecting $V$, which we will denote by $\mathcal{F}^s_V(p_L)$ (the uniqueness of the previous separatrix follows from Proposition \ref{p.propertiesoffoliformarkovianactions} and from the fact that $V$ is trivially bifoliated).

   Thanks to  Lemma \ref{l.fixatthesametimenonsepleaves}, there exists $h\in G-\{e\}$ such that $\rho(h)$ preserves every leaf in $\mathcal{L}^u$. By possibly replacing $h$ by a suitable power $h^k$, with $k\in \mathbb{N}$, we may assume that $\rho(h)$ preserves each $\mathcal{F}^{s,u}$-separatrix of $p_{L_0}$ for some $L_0\in \mathcal{L}^u$. Since $V$ is trivially bifoliated, it follows that $\rho(h)$ preserves each $\mathcal{F}^{s,u}$-separatrix of $p_L$ for every $L\in \mathcal{L}^u$ (see Figure \ref{f.case2proof}). Furthermore, after possibly replacing $h$ by $h^{-1}$, we may assume that $\rho(h)$ acts as a contraction on $\mathcal{F}^s_V(p_{L_0})$. The trivial bifoliation of $V$ then implies that $\rho(h)$ acts as a contraction on $\mathcal{F}^s_V(p_L)$ for every $L\in \mathcal{L}^u$.

   Orient the $\mathcal{F}^u$-leaves inside $V$ so that any $\mathcal{F}^u$-segment going from $V$ to $L^s$ is positively oriented. This orientation induces a total order on $\mathcal{L}^u$. We distinguish the following two cases.

\vspace{0.3cm}
\textbf{Case 1: There exists a maximal element in $\mathcal{L}^u$}

Suppose that $\mathcal{L}^u$ admits a maximal element, say $L_{max}$. In this case, using the definition of $\phi$, it is not difficult to see that $L_{max}$ forms a perfect fit with $L^s$ (see Figure \ref{f.case1proof}). Hence, $\rho(h)$ preserves $L_{max}$ and $L^s$, and fixes both $p_{L_{max}}$ and $p_{L^s}$, where $p_{L^s}$ denotes the unique periodic point of $\rho$ in $L^s$. Since $\partial V$ contains a face of $L^s$, we have that $\mathcal{F}^u(p_{L^s})$ intersects $V$. After possibly replacing $h$ by some suitable power $h^k$, with $k\in \mathbb{N}$, assume without any loss of generality that $\rho(h)$ preserves the unique $\mathcal{F}^u$-separatrix of $p_{L^s}$ that intersects $V$. Since $V$ is trivially bifoliated, we get that, in addition to $p_{L_{max}}$ and  $p_{L^s}$, the homeomorphism $\rho(h)$ fixes the unique point of intersection of $\mathcal{F}^u(p_{L^s})$ and  $\mathcal{F}^s_V(p_{L_{max}})$, which contradicts Lemma \ref{l.periodicleavescarryuniqueperiodicpoint}.

 \begin{figure}[h!]
    \centering
    \includegraphics[scale=0.4]{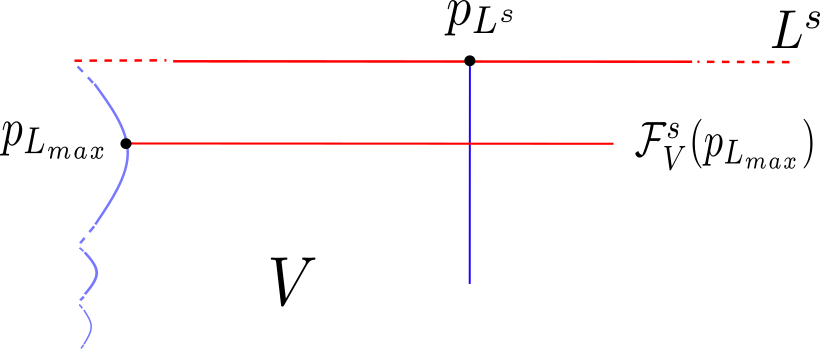}
    \caption{If $\mathcal{L}^u$ admits a maximal element, then $\rho(h)$ admits multiple fixed points on the same $\mathcal{F}^{s,u}$-leaf}
    \label{f.case1proof}
\end{figure}

\vspace{0.3cm}
\textbf{Case 2: There is no maximal element in $\mathcal{L}^u$}

In this case, thanks to the definition of $\phi$, there exists a sequence $(L_n)_{n\in \mathbb{N}}$ in  $\mathcal{L}^u$ such that the accumulation set of $(\mathcal{F}^s_V(p_{L_n}))_{n\in\mathbb{N}}$ contains $L^s$ (see Figure \ref{f.case2proof}). Since $\rho(h)$ preserves every separatrix in $(\mathcal{F}^s_V(p_{L_n}))_{n\in\mathbb{N}}$, it follows that $\rho(h)(L^s)=L^s$ or that $\rho(h)(L^s)$ is non-separated from $L^s$.

 \begin{figure}[h!]
    \centering
    \includegraphics[scale=0.35]{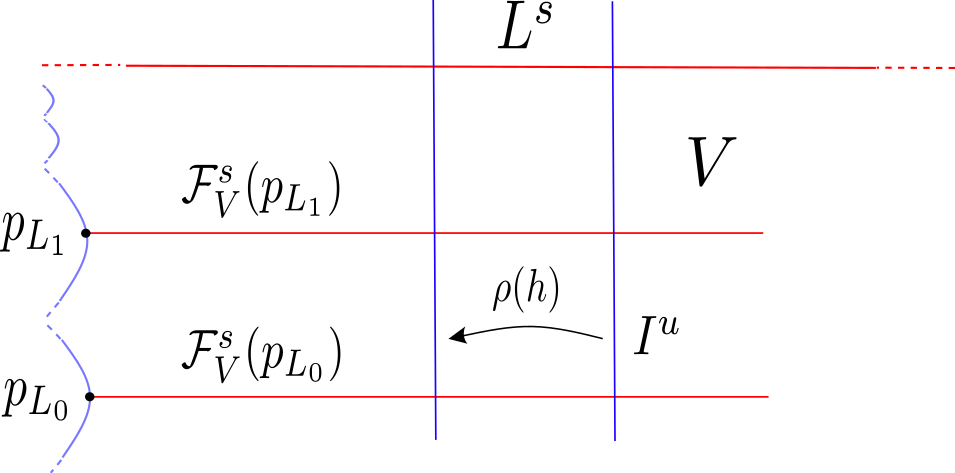}
    \caption{If $\mathcal{L}^u$ does not admit a maximal element, then $\rho(h)$ admits multiple fixed points on the same $\mathcal{F}^{s,u}$-leaf}
    \label{f.case2proof}
\end{figure}

We now show that the second possibility cannot occur. Indeed, let $I^u$ be any $\mathcal{F}^u$-leaf intersecting both $L^s$ and $\mathcal{F}^s_V(p_{L_0})$ (such a leaf exists, since $V$ is trivially bifoliated). On the one hand, since $\rho(h)$ acts as a contraction on $\mathcal{F}^s_V(p_{L_0})$ and $V$ is trivially bifoliated, we have that $\rho(h)(I^u)$ is an $\mathcal{F}^u$-leaf intersecting both $\mathcal{F}^s_V(p_{L_0})$ and $L^s$. On the other hand, since $I^u$ intersects $L^s$, the leaf $\rho(h)(I^u)$ must also intersect $\rho(h)(L^s)$. By Item (4) of Lemma \ref{l.nonseparationlemma}, we deduce that $\rho(h)(L^s)$ cannot be distinct from $L^s$ and non-separated from it. Hence,
$$
\rho(h)(L^s)=L^s
$$

In particular, $\rho(h)$ fixes both $p_{L_0}$ and $p_{L^s}$, the unique periodic point for $\rho$ in $L^s$. Similarly to Case 1, one can show that this implies the existence of two periodic points in $\mathcal{F}^s_V(p_{L_0})$, which contradicts Lemma \ref{l.periodicleavescarryuniqueperiodicpoint}.
\end{proof}

\begin{defi}\label{d.infiniteproduct}
Consider $U$ a subset of $\mathcal{P}$ that is homeomorphic to $[0,1]\times \mathbb{R}$. We will say that $U$ is an \emph{infinite product region} if, up to changing the roles of $\mathcal{F}^s$ and $\mathcal{F}^u$, there exists a homeomorphism $h:U\rightarrow [0,1]\times \mathbb{R}$ sending every $\mathcal{F}^s$-segment (resp. $\mathcal{F}^u$-segment) in $U$ to a horizontal (resp. vertical) segment in $[0,1]\times \mathbb{R}$.
\end{defi}

\begin{lemm}\label{l.noinfiniteproductregion}
    The existence of an infinite product region in $(\mathcal{P},\mathcal{F}^s,\mathcal{F}^u)$ implies that $(\mathcal{P},\mathcal{F}^s,\mathcal{F}^u)$ is trivially bifoliated (i.e. every leaf of $\mathcal{F}^s$ intersects every leaf of $\mathcal{F}^u$). 
\end{lemm}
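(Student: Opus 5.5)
We argue by pushing the given infinite product region outward until it becomes the whole plane, using Lemma \ref{l.embeddingweirdrectangle} and Lemma \ref{l.nonseparatedimpliesfixed} to rule out every way the growth can stop.

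Without loss of generality, the homeomorphism $h:U\to[0,1]\times\mathbb{R}$ sends $\mathcal{F}^s$-segments to horizontal segments and $\mathcal{F}^u$-segments to vertical lines. Each $\mathcal{F}^u$-segment $h^{-1}(\{t\}\times\mathbb{R})$ is then a full properly embedded line, hence a whole $\mathcal{F}^u$-leaf (or a full line made of two separatrices of a singular leaf; this is impossible in the interior, and we treat the boundary the same way). Every $\mathcal{F}^s$-leaf meeting $U$ crosses it from $h^{-1}(\{0\}\times\mathbb{R})$ to $h^{-1}(\{1\}\times\mathbb{R})$.

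Consider the set $\mathcal{A}$ of $\mathcal{F}^u$-leaves $L$ such that every $\mathcal{F}^s$-leaf meeting $U$ also meets $L$. Equivalently, the saturation of $U$ by $\mathcal{F}^s$ is again an infinite product region containing $L$. This set is nonempty. The union of the leaves in $\mathcal{A}$ is a maximal infinite product region $W\simeq I\times\mathbb{R}$, where $I$ is an interval. The goal is to show that $I$ is open with no boundary, that $W$ is all of $\mathcal{P}$, and that every $\mathcal{F}^s$-leaf crosses $W$.

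We first show that $W$ is open on each side. Suppose $W$ has an extremal boundary leaf $L_1$ that belongs to $\mathcal{A}$. Push slightly past $L_1$ using the transverse charts. Either the region continues, contradicting maximality, or some $\mathcal{F}^s$-leaf crossing $W$ fails to extend past $L_1$. In the second case the $\mathcal{F}^s$-leaves, followed along an increasing sequence of heights, must stop. Parametrize $\mathcal{F}^u$-leaves slightly beyond $L_1$, restricted to heights $t\to 1$. Because each $\mathcal{F}^u$-leaf in $W$ is a full line, the $\mathcal{F}^u$-segments beyond $L_1$ cannot reach every height. We then get a trivially bifoliated square $\phi:(0,1)^2\to\mathcal{P}$ whose horizontal segments accumulate on one $\mathcal{F}^s$-face, namely the first $\mathcal{F}^s$-leaf that fails to cross. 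Its vertical segments accumulate on a union of faces of non-separated $\mathcal{F}^u$-leaves, consisting of $L_1$ together with the leaves bounding beyond it. This is excluded by Lemma \ref{l.embeddingweirdrectangle}, applied with the roles of $\mathcal{F}^s$ and $\mathcal{F}^u$ possibly exchanged.

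Suppose instead that the side of $W$ is not attained, so $I$ is half-open or open, and the vertical leaves $L_t$ accumulate as $t\to\sup I$. If they accumulate on a set of leaves, the limit is a union of faces of non-separated $\mathcal{F}^u$-leaves. Choosing a sub-square near one end, the $\mathcal{F}^s$-segments accumulate on a single $\mathcal{F}^s$-face, which again contradicts Lemma \ref{l.embeddingweirdrectangle}. It remains to rule out the case where there is a single limit leaf but it lies outside $\mathcal{A}$. Then some $\mathcal{F}^s$-leaf crossing $W$ misses it, and we return to the previous case.

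Hence the leaves $L_t$ escape every compact set, and the region $W$ is open and closed in $\mathcal{P}$. By connectedness, $W=\mathcal{P}$. Since $W$ is trivially bifoliated, every $\mathcal{F}^s$-leaf meets every $\mathcal{F}^u$-leaf.

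The main obstacle is the first step: producing a genuine embedding $\phi$ that satisfies hypotheses (3) and (4) of Lemma \ref{l.embeddingweirdrectangle}. This requires checking that the accumulation on the $\mathcal{F}^s$ side is exactly one face, and that on the $\mathcal{F}^u$ side there are at least two non-separated leaves, not just one. If only one $\mathcal{F}^u$-leaf occurs in the limit, I would try to show that the region actually extends, or else that a perfect fit is formed. In the perfect-fit case, I would conclude using Lemma \ref{l.perfectfitperiodic} together with Corollary \ref{c.twofixedpointssameleaf}, as in Case 1 of the proof of Lemma \ref{l.embeddingweirdrectangle}.
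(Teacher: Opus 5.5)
Your proposal has a genuine gap, at the step you flag yourself, and your suggested fallback does not close it. You claim that whenever the maximal strip $W$ stops growing, the obstruction is a family of at least two non-separated $\mathcal{F}^u$-leaves, so that Lemma \ref{l.embeddingweirdrectangle} applies. As a statement about bifoliated planes this is false.

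Take $\mathcal{P}=\{(x,y)\in\mathbb{R}^2 : x\le 0 \text{ or } xy<1\}$ with the horizontal and vertical foliations. Then:
\begin{itemize}
\item $\{x\le 0\}$ is your maximal region $W$, and its extremal leaf $\{x=0\}$ lies in $\mathcal{A}$;
\item each vertical leaf $\{x=a\}$ with $a>0$ makes a perfect fit with $\{y=1/a\}$;
\item both leaf spaces are Hausdorff, so there are no non-separated leaves at all;
\item yet the plane is not trivially bifoliated.
\end{itemize}
The same single-leaf, perfect-fit stop is also what you get when $W$ is open on one side and converges to one leaf outside $\mathcal{A}$, so your ``return to the previous case'' does not help.

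Your fallback, Lemma \ref{l.perfectfitperiodic} together with Corollary \ref{c.twofixedpointssameleaf}, requires one leaf of the perfect fit to be periodic. Inside Lemma \ref{l.embeddingweirdrectangle} that periodicity is supplied by non-separation, via Lemmas \ref{l.nonseparatedimpliesfixed} and \ref{l.fixatthesametimenonsepleaves}. Nothing in your construction makes the boundary leaves of $W$ periodic, and since $W$ is defined purely from the foliations, there is no reason they should be. Apart from the black box of Lemma \ref{l.embeddingweirdrectangle}, your argument never uses the action, and the example shows the action must enter exactly here.

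The paper supplies the missing ingredient by driving the growth with the dynamics.
\begin{enumerate}
\item It first arranges that the $\mathcal{F}^s$-side of a half-strip contains a periodic point $p$ (Claim 1, via density of periodic leaves and Lemma \ref{l.hyperbolicityforperiodicpoints}).
\item It then takes $V=\bigcup_n\rho(g_p^n)(U)$. Every leaf on which the sides of $V$ accumulate is $\rho(g_p)$-invariant.
\item If there is a single limit leaf $l'$, it carries a fixed point $p'$, and $\mathcal{F}^s(p')\cap\mathcal{F}^u(p)$ is a second fixed point on $\mathcal{F}^u(p)$. This contradicts Lemma \ref{l.periodicleavescarryuniqueperiodicpoint}.
\item If the limit is several non-separated leaves, this is exactly the configuration of Lemma \ref{l.embeddingweirdrectangle}, with the single $\mathcal{F}^s$-face being a face of $\mathcal{F}^s(p)$.
\item The process is then repeated transversally, using a periodic $\mathcal{F}^s$-leaf inside $V$.
\end{enumerate}
In your full strip there is no such bounding $\mathcal{F}^s$-face. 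So ``choosing a sub-square near one end'' so that hypotheses (3) and (4) of Lemma \ref{l.embeddingweirdrectangle} hold simultaneously would itself need an argument.

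Finally, you read the hypothesis so that the $\mathcal{F}^u$-segments of $U$ are full leaves. The paper's proof, and its use in Claim 5 of the last proposition, concern half-strips bounded by an $\mathcal{F}^s$-segment, and your argument does not cover that case.
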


The above lemma was originally proved in \cite{Fe} for the natural action of $\pi_1(M)$ on the bifoliated plane of an Anosov flow $(M^3,\Phi)$. Later this was generalized for Anosov-like actions in \cite{circleatinfinity}. Our proof below follows the general strategy of the proof in \cite{Fe}. 

\begin{proof}
    Consider $U$ an infinite product region in $(\mathcal{P},\mathcal{F}^s,\mathcal{F}^u)$. Assume, without loss of generality, that $U$ is bounded by two closed, unbounded $\mathcal{F}^u$-segments $L_1$ and $L_2$ with respective endpoints $x_1$ and $x_2$, and by the $\mathcal{F}^s$-segment $[x_1,x_2]^s$ (see Figure \ref{f.replacingU}). Using the existence of $U$, we begin the proof of this lemma by constructing an infinite product region whose $\mathcal{F}^s$-boundary contains a periodic point.

  \begin{figure}[h!]
    \centering
    \includegraphics[width=0.5\linewidth]{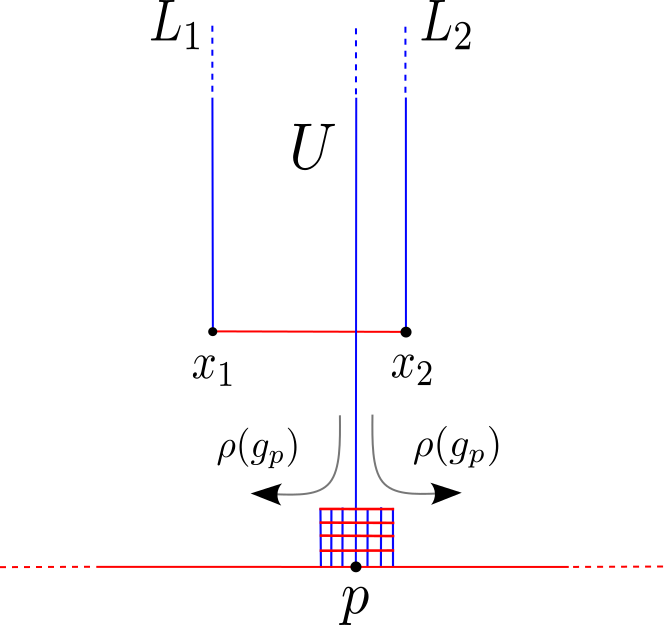}
    \caption{Replacing $U$ by an infinite product region bounded by two $\mathcal{F}^u$ half-leaves and an $\mathcal{F}^s$-segment containing a periodic point for $\rho$.}
    \label{f.replacingU}
\end{figure}

    \vspace{0.5cm}

    \textbf{Claim 1.} There exists an infinite product region in $(\mathcal{F}^s,\mathcal{F}^u)$ bounded by two closed, unbounded $\mathcal{F}^u$-segments and an $\mathcal{F}^s$-segment whose interior contains a periodic point $p$.
    \begin{proof}[Proof of the Claim]
    
  First, note that as an immediate consequence of Definition \ref{d.infiniteproduct},  $\rho(g)(U)$ is an infinite product region for every $g\in G$. Next, thanks to Lemmas \ref{l.boundaryleavesareperiodic} and \ref{l.boundaryleavesarefiniteanddense}, there exists $l$ a periodic $\mathcal{F}^u$-leaf for $\rho$ intersecting the interior of $U$. Denote by $p$ the unique periodic point  inside $l$ (see Lemma \ref{l.periodicleavescarryuniqueperiodicpoint}). 
    
    If $p\in U$, after cutting $U$ along $\mathcal{F}^s(p)$, one obtains an infinite product region with the desired properties. If $p\notin U$, then take $g_p\in \text{Stab}_{\rho}(p)$ such that $\rho(g_p)$ acts as a contraction on the $\mathcal{F}^u$-separatrix of $p$ that intersects $U$ (the existence of $g_p$ follows from Lemma \ref{l.hyperbolicityforperiodicpoints}). Recall now that the closure of any connected component of $\mathcal{P}-\mathcal{F}^s(p)$ contains trivially bifoliated neighborhoods of $p$ (see Figure \ref{f.replacingU}). By the definition of $g_p$, there exists $N\in\mathbb{N}$ sufficiently large such that $\rho(g_p^N)(U)$ intersects one of these neighborhoods. The union of this neighborhood with $\rho(g_p^N)(U)$  contains an infinite product region with the desired properties.

    \end{proof}

    Thanks to Claim 1, we may assume without any loss of generality that $(x_1,x_2)^s$ contains a periodic point $p$. Consider $g_p\in \text{Stab}_{\rho}(p)$ such that $\rho(g_p)$ acts as a contraction on every $\mathcal{F}^u$-separatrix of $p$ and as an expansion on every $\mathcal{F}^s$-separatrix of $p$. Let $$V:= \underset{n\in \mathbb{N}}{\bigcup}\rho(g_p^n)(U)$$ Since $U$ is an infinite product region and $p\in (x_1,x_2)^s$, it is not difficult to see that $V$ is trivially bifoliated (see Figure \ref{f.Vboundedbyinfiniteleaves}). Moreover, thanks to Lemma \ref{l.hyperbolicityforperiodicpoints}, we have that $V$ contains a face of $\mathcal{F}^s(p)$, which we will denote by $\partial^sV$. 

\vspace{0.5cm}

\textbf{Claim 2.} $V$ coincides with the closure a connected component of $\mathcal{P}-\mathcal{F}^s(p)$
\begin{proof}[Proof of the Claim]
 Let us prove that the sequence of leaves $(\rho(g_p^{n})(L_1))_{n\in \mathbb{N}}$ exits every compact set in $\mathcal{P}$. Assume by contradiction that this is false. We distinguish two cases.

\vspace{0.3cm}
\textbf{Case 1: The sequence $(\rho(g_p^{n})(L_1))_{n\in \mathbb{N}}$ accumulates on a face of a single $\mathcal{F}^u$-leaf, denoted by $l'$}

In this case, $l'$ is preserved by $\rho(g_p)$ and therefore admits a periodic point $p'$ fixed by $\rho(g_p)$ (see Lemma \ref{l.periodicleavescarryuniqueperiodicpoint}). As $p'$ belongs to every face of $l'$, $\mathcal{F}^s(p')$ intersects the interior of $V$. Using the facts that $p$ and $p'$ are fixed by $\rho(g_p)$ and that $V$ is trivially bifoliated, we conclude that the unique point of $\mathcal{F}^s(p')\cap \mathcal{F}^u(p)$ is also fixed by $\rho(g_p)$, which contradicts Lemma \ref{l.periodicleavescarryuniqueperiodicpoint}.

\vspace{0.3cm}
\textbf{Case 2: The sequence $(\rho(g_p^{n})(L_1))_{n\in \mathbb{N}}$ accumulates on a union of faces contained in a family of at least two non-separated $\mathcal{F}^u$-leaves}

In this case, $V$ is the image of an embedding satisfying the hypotheses of Lemma \ref{l.embeddingweirdrectangle} (see Figure \ref{f.Vboundedbyinfiniteleaves}); hence, this case is also impossible, which concludes the proof of the claim.

 \begin{figure}[h!]
    \centering
    \includegraphics[width=0.6\linewidth]{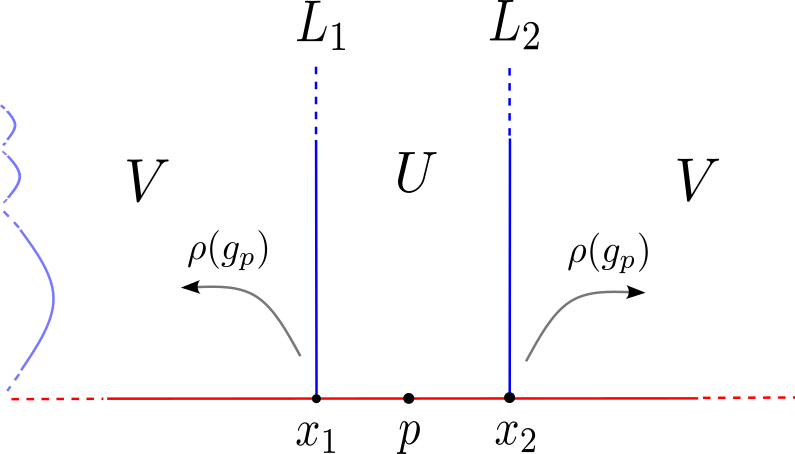}
    \caption{Extending $U$ into a larger trivially bifoliated region.}
    \label{f.Vboundedbyinfiniteleaves}
\end{figure}

\end{proof}

Let us now conclude the proof of the lemma. By Claim 2, $V$ is a trivially bifoliated connected component of $\mathcal{P}-\mathcal{F}^s(p)$. Thanks to Lemmas \ref{l.boundaryleavesareperiodic} and \ref{l.boundaryleavesarefiniteanddense}, there exists a periodic $\mathcal{F}^s$-leaf that is entirely contained in $V$. Denote by $p'$ its unique periodic point and by $g_{p'}$ an element in $\text{Stab}_{\rho}(p')$ acting as an expansion on $\mathcal{F}^u(p')$. 

Assume that $p$ is a singular periodic point. If this were true, then $p$ would be the only singular periodic point in $V$ and  therefore, $\rho(g_{p'})$ would fix $p'$, $p$ and also the unique intersection point of $\mathcal{F}^u(p')$ and $\partial^sV$. This contradicts Lemma \ref{l.periodicleavescarryuniqueperiodicpoint} and implies that $p$ is a regular periodic point for which $\partial^sV=\mathcal{F}^s(p)$. Finally, consider $W:=\underset{n\in \mathbb{N}}{\bigcup}\rho(g_{p'}^{n})(V)$. By adapting our proof of Claim 2, one can show that $W$ is trivially bifoliated and coincides with $\mathcal{P}$, which finishes the proof of the lemma. 
\end{proof}

We are now ready to prove the main result of this section. 

\begin{prop}
    There are no totally ideal quadrilaterals inside $(\mathcal{P},\mathcal{F}^s,\mathcal{F}^u)$.
\end{prop}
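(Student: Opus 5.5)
Assume by contradiction that $U$ is a totally ideal quadrilateral, bounded by faces of $l_1,l_3\in\mathcal{F}^s$ and $l_2,l_4\in\mathcal{F}^u$, with $l_i$ making a perfect fit with $l_{i+1}$. The plan is to show that some leaf $l_i$ is periodic, use hyperbolicity at its periodic point to propagate periodicity around the cycle $l_1,l_2,l_3,l_4$, and then derive a contradiction either from Lemma \ref{l.periodicleavescarryuniqueperiodicpoint} or from Lemma \ref{l.noinfiniteproductregion}.

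\textbf{Step 1: some leaf of $\mathcal{F}^{s,u}$ crossing $U$ is periodic, and this leads to an infinite product region.} By Lemmas \ref{l.boundaryleavesareperiodic} and \ref{l.boundaryleavesarefiniteanddense}, there is a periodic $\mathcal{F}^s$-leaf $f$ meeting $U$. By the third axiom of Definition \ref{d.ideal}, $f\cap \clos{U}$ is an $\mathcal{F}^s$-segment going from $l_2$ to $l_4$. Let $p$ be the unique periodic point of $f$ (Lemma \ref{l.periodicleavescarryuniqueperiodicpoint}). I would first show that $p\notin U$. Suppose instead $p\in U$. Take $g\in\text{Stab}_\rho(p)$ preserving all separatrices and expanding $\mathcal{F}^s(p)$ (Lemma \ref{l.hyperbolicityforperiodicpoints}). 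Then the two $\mathcal{F}^s$-separatrices of $p$ would be expanded inside a segment whose endpoints lie on $l_2$ and $l_4$, so $\rho(g)$ would fix $f\cap l_2$ and $f\cap l_4$. This would give several fixed points on $f$, contradicting Corollary \ref{c.twofixedpointssameleaf}.

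Hence every leaf crossing $U$ has its periodic point outside $U$. Now take $x\in U$ and the rectangle structure of $U$: $\clos{U}$ is foliated by $\mathcal{F}^s$-segments from $l_2$ to $l_4$ and $\mathcal{F}^u$-segments from $l_1$ to $l_3$. In fact $U$ is trivially bifoliated, homeomorphic to $(0,1)^2$ with horizontals in $\mathcal{F}^s$. Consider a periodic $\mathcal{F}^s$-leaf $f$ crossing $U$, with periodic point $p$ lying, say, beyond $l_2$. The element $g$ expanding $\mathcal{F}^s(p)$ pushes the crossing segment $f\cap\clos{U}$ outward toward the side of $l_4$. I would then argue, using the perfect fits $l_1$--$l_2$ and $l_2$--$l_3$ together with Lemma \ref{l.perfectfitperiodic}, that $l_2$ (and then $l_1,l_3,l_4$) are periodic. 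The intended mechanism is that the $\mathcal{F}^u$-leaves accumulated by $\rho(g^n)(f\cap\clos U)$ must be preserved, which I would make precise via Lemma \ref{l.nonseparatedimpliesfixed} and Lemma \ref{l.embeddingweirdrectangle}. I am not certain this mechanism works as stated.

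\textbf{Step 2: with all $l_i$ periodic, derive a contradiction.} Let $p_i$ be the periodic point of $l_i$. By Lemma \ref{l.perfectfitperiodic}, an element $g$ fixing $p_1$ and preserving all its separatrices also preserves $l_2$. Iterating around the cycle, $g$ (or a power of it) preserves all four leaves. Since $g\neq e$ and all $p_i$ are fixed, Lemma \ref{l.hyperbolicityforperiodicpoints} says $\rho(g)$ contracts along $\mathcal{F}^s(p_1)$ towards $p_1$. On the other hand, $\rho(g)$ preserves $U$ and the product structure of $U$. It therefore acts on the $\mathcal{F}^u$-leaf space of $U$, an open interval with ends $l_2,l_4$, and on the $\mathcal{F}^s$-leaf space, an open interval with ends $l_1,l_3$.

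Next I would follow an $\mathcal{F}^u$-segment from $l_1$ to $l_3$: it contains one point near $l_1$ whose forward orbit accumulates onto the face of $l_1$. The dynamics along $l_1$ (contracting toward $p_1$) and along $l_3$ must be compatible through the transverse product structure. The aim is to find a non-trivial $g$ fixing a point inside $U$, or two points on one leaf, and contradict Corollary \ref{c.twofixedpointssameleaf}. Concretely, $\rho(g)$ acts on the $\mathcal{F}^s$-leaf space of $U$ with the two ends fixed. The contraction toward $l_1$ near $p_1$ would force leaves to move toward $l_1$, while the analogous behaviour at $l_3$ would force them to move toward $l_3$. This produces a fixed $\mathcal{F}^s$-leaf in $U$, hence (Lemma \ref{l.periodicleavescarryuniqueperiodicpoint}) a periodic point whose leaf crosses $U$ and is fixed by $\rho(g)$. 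That contradicts Step 1, since the segment would have fixed endpoints on $l_2,l_4$.

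\textbf{Main obstacle.} The hard part is Step 1: showing that a boundary leaf of $U$ is periodic, and correctly handling the orientation and sign of contraction in Step 2 so that the two ends of the leaf space really push in opposite directions. If direct propagation fails, my fallback is the following. Saturating a periodic crossing segment by $\rho(g^n)$ produces a trivially bifoliated region, as in Claim 2 of Lemma \ref{l.noinfiniteproductregion}. Its accumulation is either a single leaf, which gives a periodic boundary leaf, or non-separated leaves, which Lemma \ref{l.embeddingweirdrectangle} excludes. Otherwise the region is an infinite product region, and Lemma \ref{l.noinfiniteproductregion} forces $\mathcal{P}$ to be trivially bifoliated, which clearly admits no totally ideal quadrilateral.
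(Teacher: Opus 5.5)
\textbf{There is a genuine gap, and it is in Step 1, where all the difficulty lies.}

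Your Step 2 is sound and is the paper's first claim. It is shorter than you make it: once $\rho(g)$ fixes $p_1$ and preserves $l_2$ (Lemma \ref{l.perfectfitperiodic}), it fixes $p_2\in\partial U$. Since $U$ is trivially bifoliated, it then also fixes the point $\mathcal{F}^u(p_1)\cap\mathcal{F}^s(p_2)\in U$, contradicting Lemma \ref{l.periodicleavescarryuniqueperiodicpoint}. The leaf-space ``opposite pushes'' discussion is unnecessary.

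Your argument that the periodic point $p$ of a crossing leaf $f$ cannot lie in $U$ is wrong. Expansion of $f$ away from $p$ only says that $\rho(g)(l_2)$ and $\rho(g)(l_4)$ cross $f$ farther from $p$ than $l_2$ and $l_4$ do. Nothing forces $\rho(g)$ to preserve $l_2$ or $l_4$. The resulting picture, with $\rho(g)(l_1),\rho(g)(l_3)$ crossing $U$ and $l_2,l_4$ crossing $\rho(g)(U)$, is locally consistent. It is exactly the configuration the paper has to work with, since it produces hyperbolic fixed points $p_{k,l}\in U$.

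Beyond that, the central claim that some $l_i$ is periodic is never proved, and the fallback does not supply it. When $p\notin U$, the quadrilaterals $\rho(g^n)(U)$ are not nested. So there is no reason their union should be trivially bifoliated; this differs from Claim 2 of Lemma \ref{l.noinfiniteproductregion}, where $p$ lies on the $\mathcal{F}^s$-side of the band. Also, a single leaf accumulated by them would be $\rho(g)$-invariant but need not be one of $l_1,\dots,l_4$.

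The paper goes the other way. It first shows (your Step 2) that no $l_i$ is periodic, so all sides are regular, non-periodic, and not boundary leaves of $\mathcal{R}$. It then rules out this case using the Markov family essentially; your proposal uses $\mathcal{R}$ only for density of periodic leaves. The argument runs as follows:
\begin{enumerate}
\item Take rectangles $R_k\ni x$, with $x\in l_1$, such that $R_{k+1}\cap R_k$ is a horizontal subrectangle of $R_k$, all in one $\rho$-orbit.
\item Let $\rho(g_{k,l})(R_k)=R_l$. These elements have hyperbolic fixed points $p_{k,l}\in U$.
\item The quadrilaterals $\rho(g_{k,0})(U)$ overlap in a Markovian pattern, and one studies their limit region $V$.
\item The $\mathcal{F}^u$-sides of $V$ accumulate on single leaves.
\item The far $\mathcal{F}^s$-side cannot escape to infinity, by Lemma \ref{l.noinfiniteproductregion}.
\item It cannot converge to a single leaf either. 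This uses the predecessors $\rho(g_k)(R_0)$ of increasing generation, together with Lemmas \ref{l.longrectanglesregularleaves} and \ref{l.longrectanglesregularleavesnoperiodicpoints}.
\item So it accumulates on several non-separated leaves, contradicting Lemma \ref{l.embeddingweirdrectangle}.
\end{enumerate}
An argument of this kind, handling the all-sides-non-periodic case, is what your Step 1 is missing.
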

The above proposition was originally proved in \cite{Fe1} for the natural action of $\pi_1(M)$ on the bifoliated plane of an Anosov flow $(M^3,\Phi)$. Our proof below follows the general strategy of the proof in \cite{Fe1}.

\begin{proof}
    Assume by contradiction that there exists a totally ideal quadrilateral $U$ inside $(\mathcal{P},\mathcal{F}^s,\mathcal{F}^u)$. Recall that by Definition \ref{d.ideal}, $U$ is an open subset of $\mathcal{P}$ for which there exist two $\mathcal{F}^s$-leaves, say $l_-^s$ and $l_+^s$, and two $\mathcal{F}^u$-leaves, say $l_-^u$ and $l_+^u$, such that:
    
    \begin{itemize}
        \item the boundary of $U$ is the disjoint union of four faces contained in $l_-^s$, $l_+^s$, $l_-^u$, and $l_+^u$ respectively (see Figure \ref{f.idealrectangles})
        \item $l^s_+$ and  $l^s_-$ form perfect fits with $l^u_+$ and $l^u_-$
    \end{itemize}
    
   \noindent We remark also that thanks to the third axiom of Definition \ref{d.ideal}, $U$ is trivially bifoliated by $\mathcal{F}^s$ and $\mathcal{F}^u$. 

\vspace{0.2cm}

\noindent \textit{Totally ideal quadrilaterals can not be periodic}
\vspace{0.2cm}

Using the fact that $U$ is trivially bifoliated, let us prove that:

\vspace{0.2cm}
\textbf{Claim 1.} None of the leaves $l_-^s$, $l_+^s$, $l_-^u$ and $l_+^u$ can be periodic for $\rho$. 
\begin{proof}[Proof of Claim 1]
   Assume, for contradiction and without loss of generality, that $l_-^s$ is periodic under $\rho$. In this case, $l_-^s$ admits a unique periodic point $p^s$. Moreover, since the boundary of $U$ contains a face of $l_-^s$, we have that $p^s\in \partial U$; hence, $\mathcal{F}^u(p^s)$ intersects $U$. 
   
   Consider now $g_{p^s}\in \text{Stab}_{\rho}(p^s)-\{e\}$ such that $\rho(g_{p^s})$ preserves each $\mathcal{F}^{s,u}$-separatrix of $p^s$. Using Lemma \ref{l.perfectfitperiodic} as well as the fact that $l^u_-$ and $l^s_-$ form a perfect fit, we get that $\rho(g_{p^s})$ preserves both  $l^s_-$ and $l^u_-$. Therefore, $\rho(g_{p^s})$ fixes a unique periodic point in $l^u_-$, say $p^u$. Once again, since the boundary of $U$ contains a face of $l_-^u$, we have that $p^u\in \partial U$ and also that $\mathcal{F}^s(p^u)$ intersects $U$. Finally, since $U$ is trivially bifoliated, in addition to $p^s$ and $p^u$, $\rho(g_{p^s})$ fixes the unique point of intersection of $\mathcal{F}^u(p^s)$ and $\mathcal{F}^s(p^u)$, which contradicts Lemma \ref{l.periodicleavescarryuniqueperiodicpoint}.
   
%and acts as a topological expansion (resp. contraction) on each $\mathcal{F}^s$-separatrix (resp. $\mathcal{F}^u$-separatrix) of $p^s$. 

%Consider $x\in l_-^s$ such that $\mathcal{F}^u(x)$ crosses $U$ and is very close to $l^u_-$ (such a point exists, because $l^s_-$ and $l^u_-$ form a perfect fit) and $y\in \mathcal{F}^u(p^s)\cap U$. By our definition of $g_{p^s}$ and since $U$ is a totally ideal quadrilateral, we have that 
%   \begin{enumerate}
%\item the sequence $\big(\rho(g_{p^s}^k)(\mathcal{F}^u(x)\big)_{k\in\mathbb{N}}$ consists of $\mathcal{F}^u$-leaves crossing $U$, and its accumulation set contains both $l^u_-$ and $\rho(g_{p^s})(l^u_-)$. We deduce that, either $\rho(g_{p^s})(l^u_-)=l^u_-$ or $\rho(g_{p^s})(l^u_-)$ is non-separated from $l^u_-$

%\vspace{0.2cm}

%\item $\rho(g_{p^s})(\mathcal{F}^s(y))$ intersects both $l^u_-$ and $\rho(g_{p^s})(l^u_-)$
%   \end{enumerate}

\end{proof}

Thanks to Claim 1, the leaves $l_-^s$, $l_+^s$, $l_-^u$ and $l_+^u$ are all non-periodic. Therefore, by Lemma \ref{l.singularareperiodic}, the leaves $l_-^s$ and $l_+^s$ (resp. $l_-^u$ and $l_+^u$) are regular $\mathcal{F}^s$-leaves (resp. $\mathcal{F}^u$-leaves) and thus $$ \partial U = l_-^s\cup l_+^s\cup l_-^u \cup l_+^u$$ Also, by Lemma \ref{l.boundaryleavesareperiodic}, none of the leaves $l_-^s$, $l_+^s$, $l_-^u$, and $l_+^u$ can be a boundary leaf of $\mathcal{R}$ (see Definition \ref{d.boundaryleaf}). 

\vspace{0.2cm}

\noindent \textit{Constructing a sequence of totally ideal quadrilaterals}
\vspace{0.2cm}

Consider $x\in l_-^s$. By a repeated application of the strong finite return time axiom, there exists $$R_0,...,R_n...$$ a sequence of rectangles in $\mathcal{R}$ such that $x\in R_k$ and $R_{k+1}\cap R_k$ is a horizontal subrectangle of $R_k$ for every $k\in \mathbb{N}$ (see Figure \ref{f.idealrectangles}).

\begin{figure}[h!]
    \centering
    \includegraphics[width=0.45\linewidth]{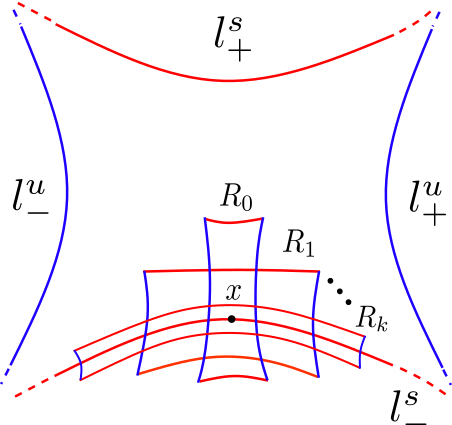}
    \caption{Generating a sequence of totally ideal quadrilaterals.}
    \label{f.idealrectangles}
\end{figure}

Thanks to the finiteness and the expansivity axioms, after possibly considering a subsequence of $(R_n)_{n\in \mathbb{N}}$, we may assume that the rectangles $R_0,...,R_n...$ belong in the same $\rho$-orbit and that $R_k\cap \partial U\subset l_-^s$ for every $k\in \mathbb{N}$. In particular, for every $k\in\mathbb{N}$, the rectangle $R_k$ has a unique $\mathcal{F}^s$-boundary component contained in $U$ and, for every $k,l\in\mathbb{N}$, there exists $g_{k,l}\in G$ such that
\begin{equation}\label{eq.definitiongkl}
    R_l=\rho(g_{k,l})(R_k)
\end{equation}

Notice that $g_{k,l}$ is uniquely defined by the above property, thanks to Lemma \ref{l.preserverectangleimpliestorsion}; hence $g_{k,l}=g_{l,k}^{-1}$ for every $k,l\in \mathbb{N}$. Moreover, if $k>l$, then by the Markovian intersection axiom, the homeomorphism $\rho(g_{k,l})$ (resp. $\rho(g_{k,l})^{-1}$) acts as a contraction on the interval of $\mathcal{F}^u$-leaves (resp. $\mathcal{F}^s$-leaves) that intersect $R_k$ (resp. $R_l$). We deduce that, for every $k>l$, the map $\rho(g_{k,l})$ admits a unique fixed point $p_{k,l}$ inside $R_k\cap R_l$ (see  Lemma~\ref{l.twofixedpointsinthesamerectangle}), acts as a topological contraction on $\mathcal{F}^s(p_{k,l})$ and as a topological expansion on $\mathcal{F}^u(p_{k,l})$.

Choose now arbitrary orientations of the $\mathcal{F}^{s}$- and the $\mathcal{F}^{u}$-leaves of $R_0$. Our previous choices induce orientations on the $\mathcal{F}^{s}$- and the $\mathcal{F}^{u}$-leaves of $R_k$ for every $k\in\mathbb{N}$. After possibly passing to a subsequence of $(R_n)_{n\in\mathbb{N}}$, we may assume without loss of generality that $\rho(g_{k,l})$ preserves these orientations for every $k,l\in \mathbb{N}$. It follows that: 

\vspace{0.2cm}
\begin{fact}\label{fact.actiongkl}
    For every $k>l$ the homeomorphism $\rho(g_{k,l})$ preserves and acts as a topological contraction (resp. expansion) on each  $\mathcal{F}^{s}$-separatrix (resp. $\mathcal{F}^{u}$-separatrix) of $p_{k,l}$.
\end{fact}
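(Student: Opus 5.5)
The plan is to fix $k>l$, write $g:=g_{k,l}$ and $p:=p_{k,l}$, and get the dynamics on whole separatrices from Lemma \ref{l.hyperbolicityforperiodicpoints}. That lemma already says that, up to replacing $g$ by $g^{-1}$, every non-trivial element of $\text{Stab}_{\rho}(p)$ contracts $\mathcal{F}^s(p)$ and expands $\mathcal{F}^u(p)$. So only two things need proving:
\begin{enumerate}
\item $\rho(g)$ preserves every $\mathcal{F}^{s}$- and $\mathcal{F}^u$-separatrix of $p$;
\item the correct choice is $g$ itself, not $g^{-1}$.
\end{enumerate}
Both should follow from the relative position of $R_k$ and $R_l=\rho(g)(R_k)$ and from the fact that $\rho(g)$ respects the chosen orientations.

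For the separatrices, note that $p\in R_k\cap R_l$ and that $R_k\cap R_l$ contains a germ of $p$ in some quadrant $Q$. Let $I^s_k,I^u_k$ denote the $\mathcal{F}^s$- and $\mathcal{F}^u$-leaves of $p$ in $R_k$, and define $I^s_l,I^u_l$ similarly in $R_l$. Since $\rho(g)$ fixes $p$ and sends $R_k$ onto $R_l$, it sends $I^s_k$ onto $I^s_l$ and $I^u_k$ onto $I^u_l$, and by the normalization it preserves the induced orientations. Hence the positive (resp. negative) part of $I^s_k$ starting at $p$ is sent into the positive (resp. negative) part of $I^s_l$, and the same holds for $\mathcal{F}^u$. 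The quadrant $Q$ is characterized by the signs of the $\mathcal{F}^s$- and $\mathcal{F}^u$-directions in which it lies, so $\rho(g)(Q)=Q$, and the two separatrices bounding $Q$ are each preserved. Now $\rho(g)$ is orientation preserving on $\mathcal{P}$ and fixes $p$, so it acts on the cyclically ordered set of separatrices of $p$ (alternating $s$ and $u$) by a rotation. A rotation fixing one separatrix fixes all of them, which gives step (1). This argument also covers the cases where $p$ is singular or lies on a boundary of $R_k$ or $R_l$.

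For the direction, use the Markovian position. $R_{l+1}$ is a successor of $R_l$, so $R_k$ is a successor of some generation of $R_l$. Thus $R_k\cap R_l$ is a non-trivial horizontal subrectangle of $R_l$ and a non-trivial vertical subrectangle of $R_k$. It follows that $I^s_l\subsetneq I^s_k$ and $I^u_k\subsetneq I^u_l$. So $\rho(g)$ maps the $\mathcal{F}^s$-segment $I^s_k\ni p$ onto a strictly smaller subsegment of itself, and along $\mathcal{F}^u$ it does the opposite. Iterating, the sets $\rho(g^n)(R_k)$ form a chain in which each rectangle is a successor of the previous one (Remark \ref{r.precedentsuivant}). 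By the expansivity axiom, $\bigcap_n\rho(g^n)(I^s_k)=\{p\}$, so $\rho(g)$ cannot expand $\mathcal{F}^s(p)$. This rules out the alternative $g^{-1}$ in Lemma \ref{l.hyperbolicityforperiodicpoints}, which then gives contraction on every $\mathcal{F}^s$-separatrix and expansion on every $\mathcal{F}^u$-separatrix.

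I expect the main obstacle to be step (1) when $p$ is singular or lies on $\partial R_k\cup\partial R_l$. There the identification of "the" quadrant through orientations needs care, since the orientation of $R_k$ only sees the separatrices meeting $R_k$. My first attempt would be to work only with the quadrant $Q$ containing a germ of $R_k\cap R_l$ and then invoke the cyclic-rotation argument. If that gets delicate, the fallback is to note that Lemma \ref{l.hyperbolicityforperiodicpoints} applied to a power $g^m$ preserving all separatrices, combined with the strict nesting $I^s_l\subsetneq I^s_k$, already excludes any non-trivial rotation.
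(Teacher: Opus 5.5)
Your argument is correct and is essentially the paper's. The Markovian position of $R_k$ and $R_l=\rho(g_{k,l})(R_k)$ fixes the direction, the orientation normalization (from passing to a subsequence) gives preservation of the separatrices, and Lemma \ref{l.hyperbolicityforperiodicpoints} upgrades this to the whole leaves. Your rotation argument usefully makes the preservation step explicit, including when $p_{k,l}$ is singular.

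Two small corrections. First, the rectangles $\rho(g^n)(R_k)$ form a chain of \emph{predecessors}, not successors: $\rho(g^{n+1})(R_k)$ meets $\rho(g^n)(R_k)$ in a non-trivial vertical subrectangle of the latter. This is exactly why the expansivity axiom yields an $\mathcal{F}^u$-leaf and hence $\bigcap_n\rho(g^n)(I^s_k)=\{p\}$. Second, your fallback is not valid: strict nesting is compatible with a half-turn such as $(s,u)\mapsto(-s/2,-2u)$, so the orientation normalization is genuinely needed, as in your main argument.
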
 

\begin{fact}\label{fact.boundarytoboundary}
    For every $k>l$ the homeomorphism $\rho(g_{k,l})$ sends the unique $\mathcal{F}^s$-boundary component of $R_k$ that is contained in $U$ to the unique $\mathcal{F}^s$-boundary component of $R_l$ that is contained in $U$.
\end{fact}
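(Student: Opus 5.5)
The plan is to compare orientations. The map $\rho(g_{k,l})$ sends $R_k$ onto $R_l$ and preserves $\mathcal{F}^s$, so it sends the two $\mathcal{F}^s$-boundary components of $R_k$ bijectively onto the two $\mathcal{F}^s$-boundary components of $R_l$. It therefore suffices to show that the component of $\partial^sR_k$ contained in $U$ and the component of $\partial^sR_l$ contained in $U$ occupy the same position (say, the ``positive'' end) with respect to the $\mathcal{F}^u$-orientations of $R_k$ and $R_l$ induced from $R_0$. Since we arranged that $\rho(g_{k,l})$ preserves these orientations, the statement then follows immediately.

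\textbf{Step 1: describe $R_k$ near $l^s_-$.} Every $R_k$ contains $x\in l^s_-$ and satisfies $R_k\cap\partial U\subset l^s_-$. By Item (5) of Proposition \ref{p.propertiesoffoliformarkovianactions}, $R_k\cap l^s_-$ is a single $\mathcal{F}^s$-leaf of $R_k$. Moreover, $l^s_-$ is not a boundary leaf of $\mathcal{R}$, by Claim 1 and Lemma \ref{l.boundaryleavesareperiodic}. Hence this $\mathcal{F}^s$-leaf of $R_k$ lies in $\inte{R_k}$, and it separates $R_k$ into two horizontal subrectangles. One of them lies in $\overline U$ and contains the $\mathcal{F}^s$-boundary component of $R_k$ inside $U$. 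The other lies in the closure of the component of $\mathcal{P}-l^s_-$ not containing $U$.

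Consequently, on the $\mathcal{F}^u$-segment $\mathcal{F}^u(x)\cap R_k$, the point $x$ splits the segment into a part in $\overline U$ and a part outside $U$. The component of $\partial^sR_k$ in $U$ is the endpoint of the part in $\overline U$.

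\textbf{Step 2: check that the induced orientations agree.} The orientation of the $\mathcal{F}^u$-leaves of $R_{k+1}$ is induced from that of $R_k$ through the non-trivial horizontal subrectangle $R_{k+1}\cap R_k$. Both rectangles contain $x$, so $\mathcal{F}^u(x)\cap R_{k+1}\cap R_k$ is a non-trivial $\mathcal{F}^u$-segment through $x$. This segment is contained in both $\mathcal{F}^u(x)\cap R_k$ and $\mathcal{F}^u(x)\cap R_{k+1}$, and the two orientations agree on it by construction.

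Along this common segment, the direction pointing from $x$ into $U$ is the same for both rectangles, because $U$ is a fixed side of $l^s_-$. By induction on $k$, this direction is positive for every $R_k$ or negative for every $R_k$. After passing to a subsequence, the same holds for the orientations transported through the chain of intersections. Thus, in every $R_k$, the $\mathcal{F}^s$-boundary component contained in $U$ is the one at the same end (say the positive end) of the $\mathcal{F}^u$-leaves.

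\textbf{Step 3: conclude.} The map $\rho(g_{k,l})$ preserves the $\mathcal{F}^u$-orientations of $R_k$ and $R_l$. It therefore maps the positive-end $\mathcal{F}^s$-boundary component of $R_k$ to the positive-end $\mathcal{F}^s$-boundary component of $R_l$. By Step 2, these are exactly the components contained in $U$.

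The main obstacle is Step 2, namely making sure the orientation induced from $R_0$ is genuinely transported coherently through the chain $R_0,R_1,\dots$. The intersections are horizontal subrectangles, so $\mathcal{F}^u$-segments of $R_{k+1}$ are sub-segments of those of $R_k$, which makes the transport well defined. Tracking the single leaf $\mathcal{F}^u(x)$ through all $R_k$ is what pins down the $U$-side consistently. If the orientations were instead fixed only along the $\rho$-orbit via $g_{0,k}$, I would first check this coherence and pass to a further subsequence if needed, as done before Fact \ref{fact.actiongkl}.
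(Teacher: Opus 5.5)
Your proposal is correct and is essentially the paper's argument. The paper simply says this fact ``follows'' once the $\mathcal{F}^{s,u}$-orientations are induced on every $R_k$ from $R_0$ and a subsequence is taken so that each $\rho(g_{k,l})$ preserves them; your Steps 1--3 supply the detail the paper leaves implicit, namely that the $\mathcal{F}^s$-boundary component lying in $U$ sits at the same end of the induced $\mathcal{F}^u$-orientation in every $R_k$, because the $\mathcal{F}^u$-leaves of $x$ in the $R_k$ are nested, compatibly oriented, and contain $x\in l^s_-$ in their interior (as $l^s_-$ is not a boundary leaf).
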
 

\vspace{0.2cm}
\textbf{Claim 2.} For every $k,l\in \mathbb{N}$ with $k> l$ we have that $p_{k,l}\in U$.
\begin{proof}[Proof of Claim 2]
Assume by contradiction that there exist $k,l\in \mathbb{N}$ with $k>l$ such that $p_{k,l}\notin U$. 

First, notice that, since $l_-^s$ is not periodic, we have that $p_{k,l}\notin  l_-^s$. Next, by Fact \ref{fact.actiongkl}, $\rho(g_{k,l})$ acts as a topological expansion along the unique $\mathcal{F}^u$-separatrix of $p_{k,l}$ intersecting $U$ (the uniqueness of this separatrix follows from Proposition \ref{p.propertiesoffoliformarkovianactions}). Combining this with Fact 2 and the assumption that $p_{k,l}\notin U$, we obtain that 
\begin{equation}\label{eq.intersectlminus}
    \rho(g_{k,l})(l_-^s)\cap U\neq \emptyset
\end{equation} 
Furthermore, since $U$ is trivially bifoliated, we also have that $$\rho(g_{k,l})(l_-^s)\cap l^u_-\neq \emptyset \text{ and } \rho(g_{k,l})(l_-^s)\cap l^u_+\neq \emptyset$$ Hence, $l^u_-$ and $l^u_+$ intersect $\rho(g_{k,l})(U)$.

\begin{figure}[h!]
    \centering
    \includegraphics[width=0.8\linewidth]{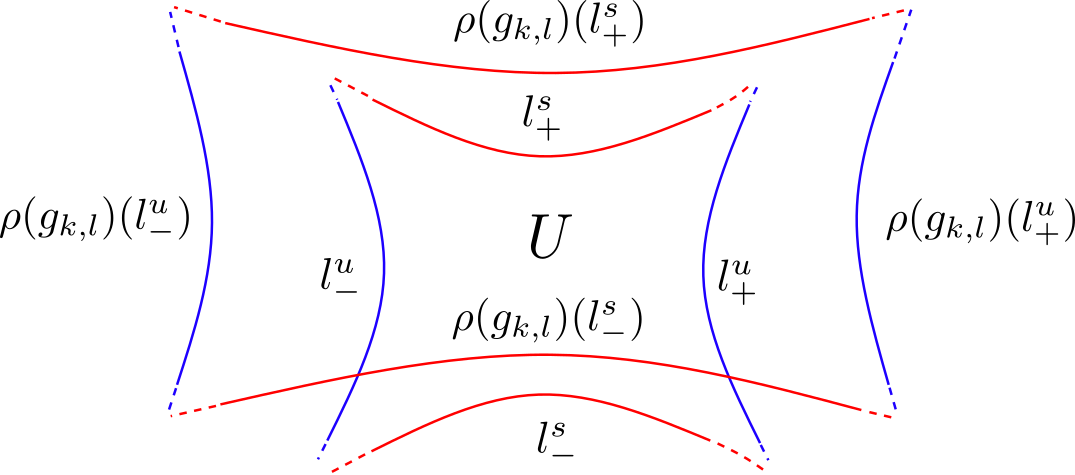}
    \caption{An impossible intersection pattern between totally ideal quadrilaterals arising from $p_{k,l}\notin U$.}
    \label{f.impossibleintersect}
\end{figure}

Assume now that $\rho(g_{k,l})(l_+^s)\cap U=\emptyset$ (see Figure \ref{f.impossibleintersect}). In this case, both $l_-^u$ and $l_+^s$ intersect $\rho(g_{k,l})(U)$ and thus, since $\rho(g_{k,l})(U)$ is trivially bifoliated, $l_-^u \cap l_+^s\neq \emptyset$, which contradicts our definition of $l_-^u$ and $l_+^s$. We deduce that 
\begin{equation}\label{eq.admissibleintersect}
    \rho(g_{k,l})(l_+^s)\cap U\neq\emptyset
\end{equation} 

Denote by $\mathcal{F}^u_U(p_{k,l})$ the unique $\mathcal{F}^u$-separatrix of $p_{k,l}$ that intersects $U$. By Fact \ref{fact.actiongkl}, $\rho(g_{k,l})$ preserves $\mathcal{F}^u_U(p_{k,l})$. Moreover, by (\ref{eq.intersectlminus}) and (\ref{eq.admissibleintersect}), the map $\rho(g_{k,l})$ acts as a contraction on $\mathcal{F}^u_U(p_{k,l})\cap U$ and therefore admits a fixed point in $\mathcal{F}^u_U(p_{k,l})\cap U$ that is distinct from $p_{k,l}$. This contradicts Lemma~\ref{l.periodicleavescarryuniqueperiodicpoint} and completes the proof of Claim~2.

\end{proof}

We remark that, by Claim 2 and the fact that $U$ is trivially bifoliated, the periodic point $p_{k,l}$ is regular for every $k,l\in\mathbb{N}$. Recall now that $g_{k,l}$ is the unique element in $G$ for which $\rho(g_{k,l})(R_{k})=R_l$. The previous fact implies that for every $k>l$ we have 
\begin{equation}\label{eq.relationgkl}
    g_{k,l}=g_{l+1,l} \cdot...  \cdot g_{k-1,k-2}\cdot g_{k,k-1}
\end{equation}

\begin{figure}[h!]
    \centering
    \includegraphics[width=0.5\linewidth]{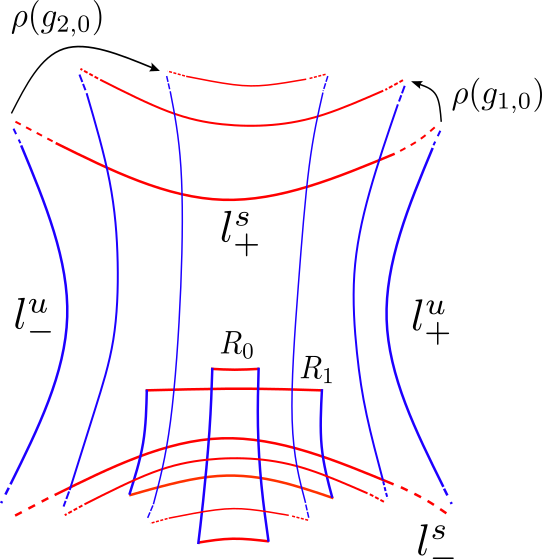}
    \caption{The intersection pattern of the totally ideal quadrilaterals $\rho(g_{k,0})(U)$.}
    \label{f.idealiteration}
\end{figure}

The above equation allows us to refine Claim~2 and to show that the sets $\rho(g_{k,0})(U)$ intersect ``Markovianly'' as in Figure \ref{f.idealiteration}.

\needspace{4\baselineskip}
\textbf{Claim 3.} For every $k,l\in \mathbb{N}$ with $k>l$ we have that: 

\begin{enumerate}
       \item $U\cap R_0 \subset \rho(g_{k,0})(U)$

    \vspace{0.1cm}

    \item $p_{k,l}\in \rho(g_{l,0})(U)\cap U$

    \vspace{0.1cm}

    \item $\rho(g_{k,0})(l_{\pm}^u)\cap \rho(g_{l,0})(U)\neq \emptyset$
    
    \vspace{0.1cm}
    
    \item $\rho(g_{k,0})(l_{\pm}^s)\cap \rho(g_{l,0})(U)= \emptyset$
\end{enumerate}
\begin{proof}[Proof of Claim 3]
  By Claim 2 and our construction of $p_{k,0}$, we have that $p_{k,0}\in R_{k}\cap R_0\cap U$ for every $k\in \mathbb{N}$. Also, by (\ref{eq.definitiongkl}), we have that $\rho(g_{k,0})(R_k)=R_0$. The previous facts, as well as Fact \ref{fact.actiongkl} imply that
  
  \begin{enumerate}[label=(\roman*)]
      \item $U\cap R_0 \subset \rho(g_{k,0})(R_k\cap U)\subset \rho(g_{k,0})(U)$, which completes the proof of Item~(1) of the claim. 

      \item $\rho(g_{k,0})(l_{\pm}^u)\cap U\neq \emptyset$ and $\rho(g_{k,0})(l_{\pm}^s)\cap U= \emptyset$, which completes the proof of Items~(3) and (4) of the claim in the case where $l=0$. 
  \end{enumerate}

More generally, assuming that Item~(2) of this claim holds, Items~(3) and~(4) follow easily from Fact ~\ref{fact.actiongkl} and the relation $g_{k,0}=g_{k,l}\cdot g_{l,0}$. It therefore suffices to prove that
\[
p_{k,l}\in \rho(g_{l,0})(U)\cap U
\]
for every $k>l$.

Assume that there exist $k',l'\in \mathbb{N}$ with $k'>l'$ such that $p_{k',l'}\notin  \rho(g_{l',0})(U)\cap U $. By Claim 2, the previous implies that $p_{k',l'}$ belongs in $U$, but not in $\rho(g_{l',0})(U)$. 

\begin{figure}[h!]
    \centering
    \includegraphics[width=0.5\linewidth]{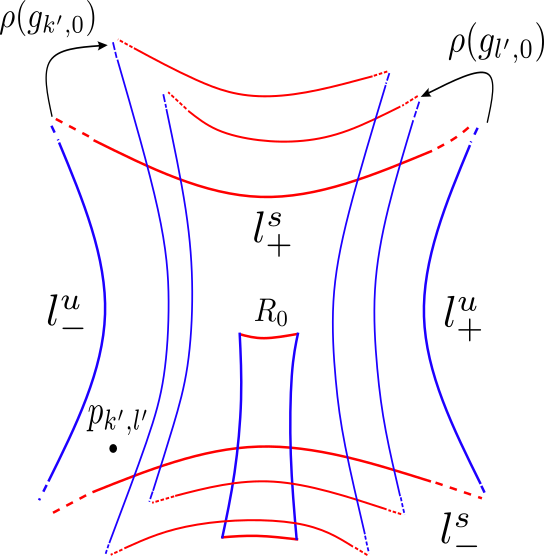}
    \caption{An impossible intersection pattern between totally ideal quadrilaterals arising from $p_{k',l'}\notin  \rho(g_{l',0})$.}
    \label{f.pklinsideul}
\end{figure}

Order the $\mathcal{F}^u$-leaves crossing $U$ from left to right so that $l^u_-$ is the leftmost leaf and $l^u_+$ is the rightmost one. Recall that by the first part of our proof, $\rho(g_{k',0})(l_{\pm}^u)\cap U\neq \emptyset$ and $\rho(g_{l',0})(l_{\pm}^u)\cap U\neq \emptyset$; hence, the leaves $$\rho(g_{k',0})(l_{-}^u), ~\rho(g_{k',0})(l_{+}^u), ~\rho(g_{l',0})(l_{-}^u), ~\rho(g_{l',0})(l_{+}^u)$$ all lie on the right of  $l^u_-$ and on the left of  $l^u_+$ (see Figure \ref{f.pklinsideul}). Without loss of generality, assume that $p_{k',l'}\in U$ lies to the right of $l^u_-$ and to the left of $\rho(g_{l',0})(l^u_-)$. Using the previous fact together with Fact 1 and the relation $g_{k',0}=g_{k',l'}\cdot g_{l',0}$, we get that the leaf $$\rho(g_{k',l'})(\rho(g_{l',0})(l^u_-))=\rho(g_{k',0})(l^u_-)$$ lies to the right of $l^u_-$ and to the left of $\rho(g_{l',0})(l^u_-)$. Furthermore, using Item (1) of this Claim, we have that $R_0\cap U \subset \rho(g_{k',0})(U)$. Combined with Fact \ref{fact.actiongkl}, this implies that the leaf $$\rho(g_{k',l'})(\rho(g_{l',0})(l^u_+))=\rho(g_{k',0})(l^u_+)$$ intersects $\rho(g_{l',0})(U)$ as in Figure \ref{f.pklinsideul}. Our previous arguments imply that both $\rho(g_{l',0})(l^s_-)$ and  $\rho(g_{l',0})(l^u_-)$ intersect $\rho(g_{k',0})(U)$. Since $\rho(g_{k',0})(U)$ is trivially bifoliated, we deduce that $$\rho(g_{l',0})(l^s_-)\cap \rho(g_{l',0})(l^u_-)\neq \emptyset$$ which is impossible as $l^s_-$ and $l^u_-$ are disjoint. 

\end{proof}

\noindent \textit{On the accumulation set of a sequence of totally ideal quadrilaterals}
\vspace{0.2cm}

We will now turn our attention to the set $V\subset \mathcal{P}$ on which $\rho(g_{k,0})(U)$ accumulates when $k\rightarrow +\infty$. Denote for the sake of simplicity $g_{k,0}$ by $g_k$. 

By Claim~3 and the fact that $U$ is trivially bifoliated, we get that the accumulation set $V$ of $(\rho(g_{k})(U))_{k\in \mathbb{N}}$ has non-empty interior, and this interior is trivially bifoliated. Concerning the boundary of $V$, we have the following two facts: 

\vspace{0.2cm}
\textbf{Claim 4.} Each of the sequences $(\rho(g_{k})(l^u_-))_{k\in \mathbb{N}}$ and $(\rho(g_{k})(l^u_+))_{k\in \mathbb{N}}$ accumulates on a face of a single $\mathcal{F}^u$-leaf. 

\vspace{0.2cm}
\textbf{Claim 5.} The accumulation set of the sequence $(\rho(g_{k})(l^s_+))_{k\in \mathbb{N}}$ contains the faces of at least two non-separated $\mathcal{F}^s$-leaves. In particular, the interior of $V$ is not a totally ideal quadrilateral. 

Before proving Claims~4 and~5, let us first complete the proof of the proposition assuming that these claims hold. Thanks to Claim 4, the sequence $(\rho(g_{k})(l^u_-))_{k\in \mathbb{N}}$ accumulates on a face of a single $\mathcal{F}^u$-leaf. Moreover, by Claim~5, we have that the accumulation set of $(\rho(g_{k})(l^s_+))_{k\in \mathbb{N}}$ contains the faces of at least two distinct non-separated $\mathcal{F}^s$-leaves. This implies that $V$ is the image of an embedding satisfying the hypotheses of Lemma \ref{l.embeddingweirdrectangle}, contradicting that lemma.

\vspace{0.3cm}
\noindent\textit{Proofs of Claims 4 and 5}

\begin{proof}[Proof of Claim 4]
We prove this claim for the sequence $(\rho(g_{k})(l^u_-))_{k\in \mathbb{N}}$; the case of $(\rho(g_{k})(l^u_+))_{k\in \mathbb{N}}$ follows from a similar argument. By Claim 3, $\rho(g_{k})(l^u_-)\cap U\neq \emptyset$ for every $k\in \mathbb{N}$; hence, the accumulation set of the sequence $(\rho(g_{k})(l^u_-))_{k\in \mathbb{N}}$ is non-empty and consists either of a face of a single $\mathcal{F}^u$-leaf or contains the faces of at least two non-separated $\mathcal{F}^u$-leaves, say $\mathcal{L}_1^{u},\mathcal{L}_2^{u}$. 

Assume that the latter case occurs. Without any loss of generality assume furthermore that $(\rho(g_{k})(l^u_-))_{k\in \mathbb{N}}$ accumulates on $\mathcal{L}_1^{u}$ and $\mathcal{L}_2^{u}$ from the right. By Claim~3, we deduce that the leaves of the sequence $(\rho(g_{k})(l^u_+))_{k\in\mathbb{N}}$ all lie to the right of both $\mathcal{L}_1^{u}$ and $\mathcal{L}_2^{u}$. Since $V$ and the sets $\rho(g_{k})(U)$ are trivially bifoliated, it is not difficult to see that both $\mathcal{L}_1^{u}$ and $\mathcal{L}_2^{u}$ are regular and non-separated from both the right and the left. This is impossible, because of Lemma \ref{l.nonseparationlemma} and the fact that both $\mathcal{L}_1^{u}$ and $\mathcal{L}_2^{u}$ are properly embedded in $\mathcal{P}$ (see Proposition \ref{p.propertiesoffoliformarkovianactions}).
\end{proof}

\begin{proof}[Proof of Claim 5]
We begin the proof of this claim by showing that the accumulation sets of the sequences $(\rho(g_{k})(l^s_+))_{k\in \mathbb{N}}$ and $(\rho(g_{k})(l^s_-))_{k\in \mathbb{N}}$ are both non-empty.

Indeed, recall that for every $k\in \mathbb{N}$ we have that $R_k\cap l_-^s\neq \emptyset$ and that $\rho(g_k)(R_k)=R_0$. It follows that $R_0\cap \rho(g_{k})(l^s_-)\neq \emptyset$ for every $k\in \mathbb{N}$, which proves that accumulation set of $(\rho(g_{k})(l^s_-))_{k\in \mathbb{N}}$ is indeed not empty. 

\begin{figure}[h!]
    \centering
    \includegraphics[width=0.45\linewidth]{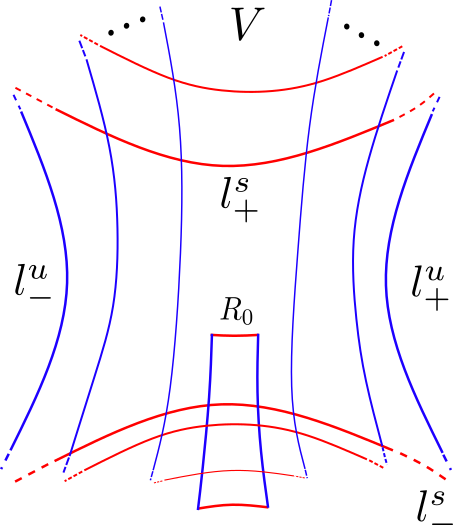}
    \caption{If $(\rho(g_{k})(l^s_+))_{k\in \mathbb{N}}$ exits every compact set in $\mathcal{P}$, then $V$ contains an infinite product region.}
    \label{f.Vcontainsinfiniteband}
\end{figure}

Next, assume by contradiction that $(\rho(g_{k})(l^s_+))_{k\in \mathbb{N}}$ exits every compact set in $\mathcal{P}$. In this case, thanks to Claim 4, the interior of $V$ is a trivially bifoliated region in $\mathcal{P}$ bounded only by the accumulation sets of $(\rho(g_{k})(l^u_+))_{k\in \mathbb{N}}$, $(\rho(g_{k})(l^u_-))_{k\in \mathbb{N}}$ and $(\rho(g_{k})(l^s_-))_{k\in \mathbb{N}}$ (see Figure \ref{f.Vcontainsinfiniteband}). Thanks to this fact, there exists an infinite product region in $V$. By Lemma \ref{l.noinfiniteproductregion}, this implies that the bifoliation $(\mathcal{F}^s,\mathcal{F}^u)$ is trivial, which contradicts the existence of the totally ideal quadrilateral $U$.

By our previous arguments, the accumulation set of $(\rho(g_{k})(l^s_+))_{k\in \mathbb{N}}$ is non-empty; hence, it consists either of a face of a single $\mathcal{F}^s$-leaf or of the union of faces of at least two non-separated $\mathcal{F}^s$-leaves. Assume by contradiction that $(\rho(g_{k})(l^s_+))_{k\in \mathbb{N}}$ accumulates on a face of a single $\mathcal{F}^s$-leaf, which we will denote by $\mathcal{L}_+^{s}$. 

Notice that, by Claim 3 and since $V$ is trivially bifoliated we have that
\begin{equation}\label{eq.intersectionLs}
 \forall y\in \inte{V} ~~   \mathcal{F}^u(y)\cap \mathcal{L}_+^{s}\neq \emptyset 
\end{equation}

Recall now that, for every $k\in\mathbb{N}^*$, the map $\rho(g_{k})$ admits a fixed point $p_{k,0}$ in $R_k\cap R_0$, acts as an expansion on each $\mathcal{F}^u$-separatrix of $p_{k,0}$, and as a contraction on each $\mathcal{F}^s$-separatrix of $p_{k,0}$. We also recall that $g_k\neq g_l$ whenever $k\neq l$. Thanks to the previous facts and also to Lemmas \ref{l.npredecessor} and \ref{l.preserverectangleimpliestorsion}, we have that $(\rho(g_k)(R_0))_{k\in\mathbb{N}^*}$ forms a sequence of mutually distinct predecessors of $R_0$ of some generation. Since for every $N\in \mathbb{N}^*$ the predecessors of $R_0$ of generation at most $N$ are finitely many (see Lemma \ref{l.existenceofpredecessors}), after possibly considering a subsequence, assume without any loss of generality that $\rho(g_{k+1})(R_0)$ is a predecessor of $R_0$ of greater generation than $\rho(g_{k})(R_0)$. 

Thanks to our previous assumption and to the expansivity axiom, as $k$ increases, the rectangle $\rho(g_k)(R_0)\cap R_0$ becomes ``thinner'' in the $\mathcal{F}^s$-direction. After possibly passing to a subsequence, assume that the rectangles $\rho(g_k)(R_0)\cap R_0$ converge to an $\mathcal{F}^u$-segment in $R_0$ (see Figure \ref{f.predecessorsnonperi}). Let $x_{\infty}$ be a point in this segment, and let $D$ be the closure of the connected component of $\mathcal{P}-\mathcal{F}^u(x_{\infty})$ containing $\rho(g_k)(R_0)$ for infinitely many $k\in\mathbb{N}^*$. Passing to a further subsequence if necessary, we may assume that
$$\rho(g_k)(R_0)\subset D$$
for every $k\in\mathbb{N}^*$.

\vspace{0.3cm}
\textbf{Case 1: $\mathcal{F}^u(x_{\infty})$ is not a periodic $\mathcal{F}^u$-leaf}

Thanks to (\ref{eq.intersectionLs}), we have that $\mathcal{F}^u(x_{\infty})\cap \mathcal{L}_+^{s}\neq \emptyset$. Furthermore, since $\mathcal{F}^u(x_{\infty})$ is non-periodic, Lemma~\ref{l.longrectanglesregularleavesnoperiodicpoints} implies that there exists $r\in \mathcal{R}$ such that $r\cap \inte{D}\neq \emptyset $, $r\cap \mathcal{L}_+^{s}\neq \emptyset$ and also such that $x_{\infty}\in r$. 

Since $\rho(g_k)(R_0)\cap R_0$ converges to $\mathcal{F}^u(x_{\infty})\cap R_0$ when $k\rightarrow \infty$, there exist infinitely $k\in \mathbb{N}^*$ for which $$\rho(g_k)(\inte{R_0})\cap \inte{r}\neq \emptyset $$

On the one hand, thanks to the previous fact, to Remark \ref{r.precedentsuivant} and also to the fact that $\rho(g_k)(R_0)$ is a predecessor of $R_0$ whose generation increases with $k$, we get that there exists $K\in \mathbb{N}^*$ for which $\rho(g_K)(R_0)$ is a predecessor of some generation of $r$. In particular, there exists $K\in \mathbb{N}^* $ for which $$\rho(g_K)(R_0)\cap \mathcal{L}_+^{s}\neq \emptyset$$ On the other hand, since $\partial^uR_0\cap l^s_+=\emptyset$, we have that $\rho(g_l)(\partial^uR_0)\cap  \rho(g_l)(l^s_+)=\emptyset$ for every $l\in \mathbb{N}^*$; hence, by Claim 3 there can not exist $k\in \mathbb{N}^*$ such that $\rho(g_k)(R_0)\cap \mathcal{L}_+^{s}\neq \emptyset$. This yields a contradiction.

\begin{figure}[h!]
    \centering
    \includegraphics[width=0.45\linewidth]{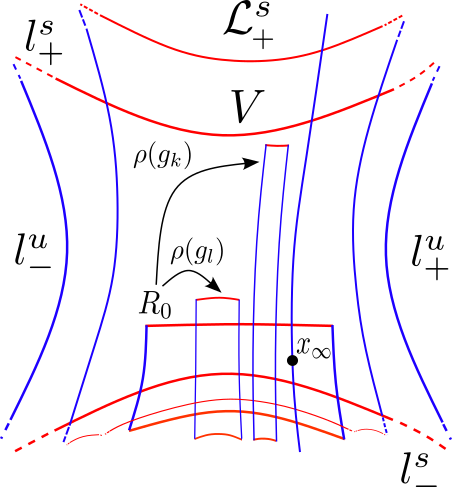}
    \caption{The accumulation of the $\rho(g_n)(R_0)$ on $\mathcal{F}^u(x_{\infty})$.}
    \label{f.predecessorsnonperi}
\end{figure}

\needspace{5\baselineskip}

\textbf{Case 2: $\mathcal{F}^u(x_{\infty})$ is a periodic $\mathcal{F}^u$-leaf}

Denote by $p_\infty$ the unique periodic point for $\rho$ inside $\mathcal{F}^u(x_\infty)$. If there exists $r\in \mathcal{R}$ such that $r\cap \inte{D}\neq \emptyset $, $r\cap \mathcal{L}_+^{s}\neq \emptyset$ and also such that $x_{\infty}\in r$, then by the same argument used in Case 1 we obtain a contradiction. Let us now assume that no such rectangle exists. The previous fact combined with (\ref{eq.intersectionLs}) and Lemmas  \ref{l.longrectanglesregularleaves} and \ref{l.longrectanglessingularleaves} implies the following: 

\begin{itemize}
    \item $p_\infty \in \inte{V}$. More specifically, $p_\infty$ belongs in the $\mathcal{F}^u$-segment going from $x_\infty$ to $\mathcal{L}^s_+$ (see Figure \ref{f.predecesseurperiodique}). Therefore, since the interior of $V$ is trivially bifoliated, $p_\infty$ is non-singular.
    %\item There does not exist a rectangle in $\mathcal{R}$ containing a neighborhood of $p_\infty$ inside $D$. Therefore, $p_\infty\notin \inte{R_0}$.
    \item There exist $r_1,r_2\in \mathcal{R}$ such that $r_1\cap \inte{D}\neq \emptyset $, $r_2\cap \inte{D}\neq \emptyset $, $p_\infty,x_\infty \in r_1$, $p_\infty\in r_2$ and $r_2\cap \mathcal{L}_+^{s}\neq \emptyset$ (see Figure \ref{f.predecesseurperiodique}).
\end{itemize}

\begin{figure}[h!]
    \centering
    \includegraphics[width=0.4\linewidth]{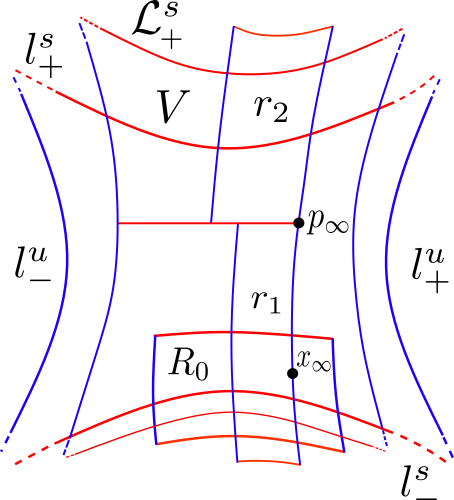}
    \caption{The case where $\mathcal{F}^u(x_{\infty})$ is periodic.}
    \label{f.predecesseurperiodique}
\end{figure}

By construction, $\rho(g_k)(R_0)\cap R_0$ converges to $\mathcal{F}^u(x_\infty)\cap R_0$ when $k\rightarrow +\infty$. It follows that there exist infinitely many $k\in\mathbb{N}^*$ for which $\rho(g_k)(R_0)$ intersects the interior of $r_1$. If there exist infinitely many $k\in\mathbb{N}^*$ for which $\rho(g_k)(R_0)$ intersects the interiors of both $r_1$ and $r_2$, then we can apply an argument similar to the one used in Case 1 in order to produce $K\in \mathbb{N}$ for which $\rho(g_K)(R_0)\cap \mathcal{L}^s_+\neq \emptyset$ and obtain a contradiction. After possibly considering a subsequence, let us therefore assume that for every $k\in\mathbb{N}^*$ 
\begin{equation}\label{eq.intersectionr1r2}
    \rho(g_k)(R_0)\cap \inte{r_1}\neq \emptyset \text{ and } \rho(g_k)(R_0)\cap \inte{r_2}= \emptyset
\end{equation}

Denote by $\mathcal{F}^s_-(p_\infty)$ the $\mathcal{F}^s$-separatrix of $p_\infty$ that intersects $D$ non-trivially. Thanks to (\ref{eq.intersectionr1r2}), to Remark \ref{r.precedentsuivant} and to the fact that  $\rho(g_k)(R_0)$ is a predecessor of $R_0$ whose generation increases with $k$, we get that for $k$ sufficiently big $\rho(g_k)(R_0)$ is a predecessor of $r_1$ of some generation and also that  

\begin{equation}\label{eq.takesinseparatrix}
  \rho(g_k)(\partial^uR_0\cap U)\subset \mathcal{F}^s_-(p_\infty)  
\end{equation}

Take $K>L$ sufficiently big so that both $\rho(g_K)(\partial^uR_0\cap U)$ and $\rho(g_L)(\partial^uR_0\cap U)$ are contained in $\mathcal{F}^s_-(p_\infty)$. Thanks to the previous fact, we have that $\mathcal{F}^s(p_\infty)$ intersects $\rho(g_L)(U)$. Moreover, by Claim~3,
$$U\cap R_0\subset \rho(g_L)(U);$$
hence, $\mathcal{F}^u(x_\infty)$ also intersects $\rho(g_L)(U)$. The previous facts, together with the fact that $\rho(g_L)(U)$ is trivially bifoliated, imply that $p_\infty$, the unique point of intersection of $\mathcal{F}^u(x_\infty)$ and $\mathcal{F}^s(p_\infty)$, lies inside $\rho(g_L)(U)$.

Recall now that $g_K=g_{K,L}\cdot g_L$. Thanks to our choice of $K,L$ we get that $\rho(g_{K,L})$ preserves $\mathcal{F}^s_-(p_\infty) $ and thus fixes $p_{\infty}$, the unique periodic point in $\mathcal{F}^s_-(p_\infty) $. On the other hand, our definition of $g_{K,L}$ and Claim 3 imply that $\rho(g_{K,L})$ fixes a point $p_{K,L}$ inside $ R_K\cap R_L\cap \rho(g_L)(U)\cap U$. 

A priori, $p_{K,L}$ may coincide with $p_{\infty}$. Nevertheless, we will show that, by changing our choice of $K$ and $L$, we may assume that $p_{K,L}\neq p_{\infty}$. Indeed, assume that there exists $L_0\in \mathbb{N}$ such that $p_{K,L}=p_{\infty}$ for every $K>L>L_0$. Then, since $p_{K,L}\in R_K\cap R_L$ we have that 
$$p_{\infty}\in \underset{k>L_0}{\cap}R_k$$
Moreover, by the expansivity axiom and our definition of the sequence $(R_n)_{n\in\mathbb{N}}$, 
$$\underset{k>L_0}{\cap}R_k\subset l^s_-$$
which would imply that $p_{\infty}\in l^s_-$ and thus that $l^s_-$ is periodic, contradicting the first part of our proof. We can therefore assume without any loss of generality that $p_{K,L}\neq p_{\infty}$.

Finally, since both $p_{K,L}$ and $ p_{\infty}$ are fixed by $\rho(g_{K,L})$ and are contained in the trivially bifoliated region $\rho(g_L)(U)$, we have that $\rho(g_{K,L})$ also fixes the unique point of intersection of $\mathcal{F}^u(p_{\infty})$ and $\mathcal{F}^s(p_{K,L})$ inside $\rho(g_L)(U)$. This contradicts Lemma~\ref{l.periodicleavescarryuniqueperiodicpoint} and finishes the proof of Claim~5.

\end{proof}

\end{proof}

\end{document}